\theoremstyle{plain}
\newtheorem{theorem}{Theorem}[section]
\newtheorem{proposition}{Proposition}[section]
\newtheorem{corollary}{Corollary}[section]
\newtheorem{lemma}{Lemma}[section]
\theoremstyle{remark}
\newtheorem{remark}{Remark}[section]
\newtheorem{examples}{Example}[section]
\newtheorem{assumption}{Assumption}[section]
\DeclareMathOperator{\dist}{dist}
\DeclareMathOperator{\supp}{supp}
\DeclareMathOperator{\diam}{diam}
\DeclareMathOperator{\lin}{span}
\DeclareMathOperator{\loc}{loc}
\DeclareMathOperator{\diag}{diag}
\DeclareMathOperator{\lip}{Lip}
\DeclareMathOperator{\Alt}{Alt}
\DeclareMathOperator{\sgn}{sgn}
\DeclareMathOperator{\im}{\mathrm{im}}
\begin{document}

\title{Differential complexes for local Dirichlet spaces, and non-local-to-local approximations}
\author{Michael Hinz$^1$}
\address{$^1$ Fakult\"{a}t f\"{u}r Mathematik, Universit\"{a}t Bielefeld, Postfach 100131, 33501 Bielefeld,
Germany}
\email{mhinz@math.uni-bielefeld.de}
\author{J\"orn Kommer$^2$}
\address{$^2$ Fakult\"{a}t f\"{u}r Mathematik, Universit\"{a}t Bielefeld, Postfach 100131, 33501 Bielefeld,
Germany}
\thanks{$^1$, $^2$ Research supported by the DFG IRTG 2235: \enquote{Searching for the regular in the irregular: Analysis of singular and random systems}. }
\email{joern.kommer@web.de}

\date{\today}

\begin{abstract}
We study differential $p$-forms on non-smooth and possibly fractal metric measure spaces, endowed with a local Dirichlet form. Using this local Dirichlet form, we prove a result on the localization of antisymmetric functions of $p+1$ variables on diagonal neighborhoods to differential $p$-forms. This result generalizes both the well-known classical localization on smooth Riemannian manifolds and the well-known semigroup approximation for quadratic forms. We observe that a related localization map taking functions into forms is well-defined and induces a chain map from a differential complex of 
Kolmogorov-Alexander-Spanier type onto a differential complex of deRham type.
\tableofcontents
\end{abstract}

\maketitle

\section{Introduction}

In this article we study differential forms and differential complexes based on local Dirichlet forms on metric measure spaces. Using the locality of the given form, we describe the passage from functions of $p+1$ variables on diagonal neighborhoods to differential $p$-forms by an explicit limit of \enquote{cotangential structures}. This limit relation is a natural link between differential complexes of Kolmogorov-Alexander-Spanier type and differential complexes of deRham type. Our main message is that a careful formulation of this limit relation will be general enough to apply to a wide variety of metric measure spaces, including products of fractal spaces.

Differential forms on smooth manifolds, \cite{dRh60}, are an established and widely studied subject. They are natural objects to be considered in differential geometry, integration theory and the study of topological invariants, \cite{BT82, dRh60, Warner}, and they have natural applications in mathematical physics, \cite{AHM88, Taylor, Temam97}. The study of differential forms on Riemannian manifolds from the point of view of variational calculus and elliptic operators, \cite{BL92, Dodziuk81} is central in Hodge theory, \cite{Cheeger, dRh60, dRh-K50, Gaffney55, Hodge41, Kodaira49}, and the study of geometric invariants, \cite{Gilkey74, Lueck, Pansu96, Rosenberg}. It also has important links to spectral theory, \cite{Rosenberg}, and numerical mathematics, \cite{Arnold18, Dodziuk76}. 

Dirichlet form theory, \cite{BH91, Da89, FOT94, LeJan78}, is a natural generalization of this variational point of view. It is well-developed at the level of functions, that is, \enquote{differential zero-forms}. Differential one-forms based on Dirichlet forms were introduced in \cite{CS03, Sauvageot89, Sauvageot90} and \cite[Sections 4 and 6]{W00} in a context of $C^\ast$- respectively Lipschitz algebras and in \cite[Chapter III]{Eberle99} in connection with abstract Sobolev spaces. A description of the main idea was also given in \cite[Chapter V, Exercise 5.9]{BH91} with an attribution to Mokobodzki, \cite[p. 305]{BH91}. The probabilistic counterpart of this approach seems to be even older, \cite{FOT94, Nakao85}. This Dirichlet form based concept was then used by several authors to study vector analysis, \cite{H15, HRT13}, and analysis on fractals, \cite{CGIS13, CS09, HKT15, HR16, HT15, HinzTeplyaev18, IRT12}. Except for \cite{CS03, H15, Sauvageot90} and the first sections in \cite{Sauvageot89} the primarily given objects in these sources were a local Dirichlet form or a diffusion generator. A first order derivation operator was then obtained by a subsequent construction. In this approach the Dirichlet form domain is Hilbert. As a particular consequence, also the resulting cotangent spaces are Hilbert. Except for \cite{HinzTeplyaev18} none of these references addressed differential $p$-forms for $p\geq 2$.

A strongly related approach to differential $p$-forms on metric measure spaces was investigated in \cite{Gigli17}. There a different starting point was chosen, with an emphasis on the metric structure and its interplay with the measure. The first order structure was introduced using minimal upper gradients and without the assumption that the domain of a related energy form would be a Hilbert space. Under the assumption that it is Hilbert, the resulting construction roughly speaking coincides with the one from the Dirichlet form approach, but it does not cover fractal spaces, see \cite[Section 2.3.5]{Gigli17}.
The later parts of \cite{Gigli17} contain a fairly comprehensive study of differential $p$-forms on $\mathrm{RCD}(K,\infty)$-spaces, including closed exterior derivations and Hodge theory, \cite[Section 3.5]{Gigli17}. An extension of this program to Dirichlet metric measure spaces with distribution-valued lower Ricci bounds is given in \cite{Braun21}.

Another established subject is the approach to cohomology by Kolmogorov, Alexander and Spanier, see
\cite{Alexander35, Kolmogorov36, Kolmogorov36a, Massey78, Spanier48, Spanier66, Warner}. In this theory differential complexes are formulated using germs of functions on products of the given space. The corresponding \enquote{derivation} is the usual coboundary operator, see (\ref{E:coboundary}) below. A more recent metric variant of this originally topological theory gained some attention, \cite{BSSS12, Genton, Pansu96, Pansu04, SS12}. It is related to the theory of Vietoris-Rips complexes, \cite{Gromov87, Hausmann95, Latschev01, Vietoris27}, which has links to interesting applications, \cite{Carlsson, ChazaldeSilvaOudot, ELZ}. The formulations \cite{BSSS12, Genton, SS12} used measures and integral kernels and  
 motivated our very recent study \cite{HinzKommer, Kommer} of differential complexes associated with generally unbounded non-local Dirichlet forms on metric measure spaces.

In this article we study how to pass from complexes of Kolmogorov-Alexander-Spanier type on metric measure spaces to complexes of, roughly speaking, deRham type. The \enquote{localization} of (antisymmetric) multivariate functions to differential forms can be implemented in many different ways. One is a purely algebraic factorization procedure, see \cite[Chapter 16]{Ei99}, \cite[Section 8.1]{GVF}. In this case it may not have any connection to the structure of the underlying space or to classical or functional analysis. Another way, which is typically used on smooth manifolds, is to differentiate along smooth curves, see for instance \cite[Section 1]{CM90} or \cite[Proposition 2.1]{SS12}; we recall details in Theorem \ref{T:localizationM} in Section \ref{S:manifolds} below. In this case one can define a surjective chain map $\lambda$ taking multivariate functions into differential forms. In view of common definitions, cf. \cite[Definition 1.13]{Warner} and \cite[Chapters 2 and 16]{Ei99}, this map $\lambda$ might be called a \emph{localization map}. Here we are interested in a formulation general enough to apply to fractal metric measure spaces, in which we cannot expect to have sufficiently many well-behaved curves. 

Since we assume that a local Dirichlet form is given, differential one-forms and first order derivations can be defined following 
the path of \cite[Chapter V, Exercise 5.9]{BH91} and \cite[Chapter III]{Eberle99} which uses a factorization by energy densities. See Theorem \ref{T:Eberle} and formulas (\ref{E:factorbyenergydens}) and (\ref{E:factorGamma}) below. This is in line with \cite{Gigli17} and with the formulations in \cite{CS03, Sauvageot89, Sauvageot90, W00}, which may be considered \enquote{integrated variants} of this idea. To have energy densities, that is, a carr\'e du champ, \cite{BH91}, we require the volume measure to be energy dominant, \cite{Hino10, HRT13}. In fractal examples this corresponds to the use of Kusuoka type measures, \cite{Kusuoka89}, respectively their products. Under mild further assumptions one can then take exterior products and construct a differential complex of deRham type. See Theorem \ref{T:localcomplex} for details.

Following \cite{HinzKommer,Kommer} we formulate complexes of Kolmogorov-Alexander-Spanier type in terms of antisymmetric functions of tensor product structure, Theorem \ref{T:elemcomplex}. This alone does not require anything further. If some additional hypotheses are satisfied, one can observe an explicit non-local-to-local limit of \enquote{cotangential structures}, Proposition \ref{P:limitx0}. In concrete applications this limit can be realized in different ways, some are described in Section \ref{S:realize}. A realization in terms of the associated Markov semigroup may be viewed as a determinantal generalization of the familiar semigroup approximation for Dirichlet forms, \cite[Lemma 1.3.4 (i)]{FOT94}, and as a heat flow based variant of localization. It is quite versatile and applies to Riemannian manifolds, Examples \ref{Ex:Rmf2}, to $\mathrm{RCD}^\ast(K,N)$-spaces, Examples \ref{Ex:RCDKN}, to degenerate diffusions, Examples \ref{Ex:degenerate},  and even to fractal spaces, Examples \ref{Ex:ProdSG2}. 

The mentioned non-local-to-local limit relation can be used to show the existence of a surjective chain map $\lambda$ from the Kolmogorov-Alexander-Spanier complex onto the deRham type complex, Theorem \ref{T:localization}. This result may be viewed as an energy based generalization of the aforementioned result in the manifold situation, Theorem \ref{T:localizationM}. 

We point out that a first and less general version of these results was obtained in the thesis \cite{Kommer}, on which this article is partially based. 

In Section \ref{S:KAS} we briefly recall basic definitions and observations around Kolmogorov-Alexander-Spanier complexes. In Section \ref{S:local} we recall basic facts on measurable fields of Hilbert spaces, local Dirichlet forms and related first order
structures and then formulate associated local differential complexes. In Section \ref{S:approx} we prove the mentioned non-local-to-local approximation for strongly local regular Dirichlet forms, suitably chosen kernels and a suitably chosen algebra of functions. Several realizations of this convergence and corresponding examples are discussed in Section \ref{S:realize}.

\section{Non-local complexes of multivariate functions}\label{S:KAS}

We recall some well-known definitions and elementary facts, \cite{Alexander35, BSSS12, CM90, Kolmogorov36, Kolmogorov36a, Massey78, SS12, Spanier48,Spanier66}, in the particular formulation used in \cite[Section 3]{HinzKommer}. 

Let $X$ be a set, $p\geq 1$ an integer, and let $\mathcal{S}_p$ denote the symmetric group of order $p$. A function $F:X^p\to \mathbb{R}$ is called \emph{antisymmetric} if $F(x_{\sigma(1)},...,x_{\sigma(p)})= \sgn(\sigma) F(x_1,...,x_p)$ for all permutations $\sigma\in \mathcal{S}_p$; here $\sgn \sigma$ denotes the sign of the permutation $\sigma$. We write 
\[\Alt_p(F)(x_1,...,x_p):=\frac{1}{p!}\sum_{\sigma\in \mathcal{S}_p}\sgn(\sigma)F(x_1,...,x_p)\]
for the \emph{antisymmetrizer} $\Alt_p$; it takes a function $F:X^p\to \mathbb{R}$ into an antisymmetric one.

If $F:X^p\to \mathbb{R}$ is a given function, then 
\begin{equation}\label{E:coboundary}
\delta_{p-1}F(x_0,...,x_p):=\sum_{i=0}^p(-1)^iF(x_0,...,\hat{x}_i,...,x_p),
\end{equation}
where $\hat{x}_i$ means that $x_i$ is omitted, defines a function $\delta_{p-1}F:X^{p+1}\to \mathbb{R}$. For any $p\ge 1$ the map $F\mapsto \delta_{p-1}F$ is linear, it is called the \emph{(Kolmogorov-Alexander-Spanier) coboundary operator} of order $p-1$. It satisfies 
\begin{equation}\label{E:deltadeltanull}
\delta_p\circ \delta_{p-1}=0,\quad p\geq 1,
\end{equation}
and
\begin{equation}\label{E:deltaaltcommute}
\delta_{p-1}\circ \Alt_p=\Alt_{p+1}\circ\: \delta_{p-1},\quad p\geq 1.
\end{equation}
Clearly $\delta_0\mathbf{1}=0$. 

Let $\mathcal{C}$ be a space of real valued functions on $X$. We write $\mathcal{C}^{0}:=\mathcal{C}$, and for $p\geq 1$ define
\begin{equation}\label{E:Cp}
\mathcal{C}^{p}:=\lin \{ \Alt_{p+1}(f_0\otimes f_1\otimes \dots\otimes f_p): f_0\in \mathcal{C}\oplus \mathbb{R}\ \text{and}\ f_1,\dots, f_p \in \mathcal{C}\}.
\end{equation}
To the elements of $\mathcal{C}^{p}$ we refer as \emph{elementary $p$-functions}. It is not difficult to see that for any $p\geq 1$ and $f_1,\dots, f_p \in \mathcal{C}$ we have 
\begin{equation}\label{E:deltaalttensor}
\delta_{p-1}\Alt_p(f_1\otimes \cdots\otimes f_p)=(p+1)\Alt_{p+1}(\mathbf{1}\otimes f_1\otimes \cdots\otimes f_p).
\end{equation}
Evaluated at $(x_0,x_1,...,x_p)\in X^{p+1}$ this equals
\begin{equation}\label{E:determinant}
\Alt_p(\delta_0 f_1(x_0,\cdot)\otimes \cdots \otimes \delta_0 f_p(x_0,\cdot))(x_1,...,x_p)=\frac{1}{p!}\det\big[(\delta_0 f_i(x_0,x_j))_{i,j=1}^p\big].
\end{equation}

Now suppose that $X$ is a topological space. We call a family $N_\ast=(N_p)_{p\geq 0}$ a \emph{system of diagonal neighborhoods} for $X$ if 
\begin{enumerate}
\item[(i)] the $N_p$, $p\geq 0$, are open neighborhoods of the diagonal $\diag_p:=\{(x_0,...,x_0): x_0\in X\}$ in $X^{p+1}$, respectively,
\item[(ii)] the $N_p$ are symmetric in the sense that for any $\pi\in \mathcal{S}_{p+1}$ and any $(x_{0},...,x_p)\in N_p$
we have $(x_{\pi(0)},...,x_{\pi(p)})\in N_p$, 
\item[(iii)] for any $p\geq 1$, any $(x_0,...,x_p)\in N_p$ and any $0\leq i\leq p$ we have $(x_0,...,\hat{x}_i,...,x_p)\in N_{p-1}$.
\end{enumerate}
See for instance \cite[Section 1]{CM90}.

\begin{remark}\mbox{}
\begin{enumerate}
\item[(i)] Note that we have $N_0=X$.  
\item[(ii)] For the purposes of this article it is convenient to define the neighborhoods $N_p$ as open sets.
\end{enumerate}
\end{remark}

\begin{examples}\label{Ex:metric}
If $(X,\varrho)$ is a metric space, then for any $\varepsilon>0$ the sets 
\begin{equation}\label{E:boundedrange}
N_p(\varepsilon):=\{(x_0,...,x_p)\in X^{p+1}: \max_{0\leq i<j\leq p}\varrho(x_i,x_j)<\varepsilon\}
\end{equation}
form a system $N_\ast(\varepsilon)=(N_p(\varepsilon))_{p\geq 0}$ of diagonal neighborhoods for $X$. 
\end{examples}

We set 
\begin{equation}\label{E:restrict}
\mathcal{C}^p(N_p):=\mathcal{C}^p|_{N_p},\quad p\geq 0.
\end{equation}
By (\ref{E:Cp}) and (\ref{E:deltaalttensor}) we have 
\[\delta_p:\mathcal{C}^p(N_p)\to \mathcal{C}^{p+1}(N_{p+1}),\quad p\geq 0.\]
Taking into account (\ref{E:deltadeltanull}), we obtain the following.

\begin{theorem}\label{T:elemcomplex} For any system $N_\ast=(N_p)_{p\geq 0}$ of diagonal neighborhoods the sequence
\begin{equation}\label{E:elemcomplex}
0\longrightarrow \mathcal{C}^0(N_0)\stackrel{\delta_0}{\longrightarrow}\mathcal{C}^1(N_1)\stackrel{\delta_1}{\longrightarrow} ... \stackrel{\delta_{p-1}}{\longrightarrow} \mathcal{C}^p(N_p)  \stackrel{\delta_p}{\longrightarrow} ...
\end{equation}
is a cochain complex. 
\end{theorem}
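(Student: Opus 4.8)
The plan is to unwind the assertion ``\eqref{E:elemcomplex} is a cochain complex'' into its constituent requirements and check each from the formal identities already recorded. Concretely, we must show: (a) for every $p\ge 0$ the coboundary $\delta_p$ restricts to a well-defined linear map $\mathcal{C}^p(N_p)\to\mathcal{C}^{p+1}(N_{p+1})$; and (b) $\delta_p\circ\delta_{p-1}=0$ on $\mathcal{C}^{p-1}(N_{p-1})$ for $p\ge 1$ (the composition of the initial arrow $0\to\mathcal{C}^0(N_0)$ with $\delta_0$ being trivially zero). Linearity of each $\delta_p$ is immediate from the defining formula \eqref{E:coboundary}, and \eqref{E:deltadeltanull} holds pointwise on all of $X^{p+1}$, hence in particular on the subset $N_p\subseteq X^{p+1}$; so (b) comes for free once (a) is established. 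Thus the only substantive point is the mapping property (a).

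For (a) I would first argue at the level of functions on the full products, showing $\delta_p(\mathcal{C}^p)\subseteq\mathcal{C}^{p+1}$. By linearity it suffices to treat a single generator $\Alt_{p+1}(f_0\otimes f_1\otimes\cdots\otimes f_p)$ of $\mathcal{C}^p$ with $f_0\in\mathcal{C}\oplus\mathbb{R}$ and $f_1,\dots,f_p\in\mathcal{C}$, and to distinguish two cases. If $f_0$ is constant, then up to a scalar the generator equals $\Alt_{p+1}(\mathbf{1}\otimes f_1\otimes\cdots\otimes f_p)$, which by \eqref{E:deltaalttensor} is $\tfrac{1}{p+1}\,\delta_{p-1}\Alt_p(f_1\otimes\cdots\otimes f_p)$; applying $\delta_p$ and invoking \eqref{E:deltadeltanull} yields $0\in\mathcal{C}^{p+1}$. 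If instead $f_0\in\mathcal{C}$, then identity \eqref{E:deltaalttensor}, applied with $p$ shifted to $p+1$ and the tuple $(f_0,f_1,\dots,f_p)$ in place of $(f_1,\dots,f_{p+1})$, gives
\[
\delta_p\Alt_{p+1}(f_0\otimes f_1\otimes\cdots\otimes f_p)=(p+2)\,\Alt_{p+2}(\mathbf{1}\otimes f_0\otimes f_1\otimes\cdots\otimes f_p),
\]
and the right-hand side is $(p+2)$ times a generator of $\mathcal{C}^{p+1}$, namely the one with $\mathbf{1}\in\mathcal{C}\oplus\mathbb{R}$ in the zeroth slot and $f_0,\dots,f_p\in\mathcal{C}$ in the remaining ones. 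In both cases $\delta_p$ sends the generator into $\mathcal{C}^{p+1}$, hence $\delta_p(\mathcal{C}^p)\subseteq\mathcal{C}^{p+1}$.

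It remains to descend to the restricted spaces \eqref{E:restrict}. Axiom (iii) of a system of diagonal neighborhoods guarantees that deleting any one coordinate from a point of $N_{p+1}$ produces a point of $N_p$, so the sum in \eqref{E:coboundary} already makes sense for a function defined only on $N_p$ and thereby defines $\delta_p$ on $\mathcal{C}^p(N_p)$. Concretely, given $F\in\mathcal{C}^p(N_p)$ and any lift $G\in\mathcal{C}^p$ with $G|_{N_p}=F$, for $(x_0,\dots,x_{p+1})\in N_{p+1}$ each deleted tuple $(x_0,\dots,\hat{x}_i,\dots,x_{p+1})$ lies in $N_p$, so term by term $\delta_pF(x_0,\dots,x_{p+1})=\delta_pG(x_0,\dots,x_{p+1})$; thus $\delta_pF=(\delta_pG)|_{N_{p+1}}$, which by the previous paragraph lies in $\mathcal{C}^{p+1}|_{N_{p+1}}=\mathcal{C}^{p+1}(N_{p+1})$ and in particular is independent of the chosen lift. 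Hence $\delta_p\colon\mathcal{C}^p(N_p)\to\mathcal{C}^{p+1}(N_{p+1})$ is well defined and linear, and combined with $\delta_p\circ\delta_{p-1}=0$ this is exactly the statement that \eqref{E:elemcomplex} is a cochain complex.

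There is no genuine obstacle here: the theorem is a bookkeeping consequence of the purely formal identities \eqref{E:deltadeltanull}, \eqref{E:deltaaltcommute}, \eqref{E:deltaalttensor} together with axiom (iii). The one step I would write out most carefully — and the only one with any content — is the stability $\delta_p(\mathcal{C}^p)\subseteq\mathcal{C}^{p+1}$ and the attendant check that $\delta_p$ really descends to the restrictions $\mathcal{C}^p(N_p)$; the rest is immediate.
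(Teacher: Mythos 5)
Your proposal is correct and follows essentially the same route as the paper, which obtains the theorem directly from the mapping property $\delta_p(\mathcal{C}^p(N_p))\subseteq\mathcal{C}^{p+1}(N_{p+1})$ via (\ref{E:Cp}) and (\ref{E:deltaalttensor}) together with (\ref{E:deltadeltanull}); you merely spell out the case split for the generator (constant versus non-constant zeroth factor) and the descent to the restricted spaces using axiom (iii), which the paper leaves implicit.
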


As usual, we write $(\mathcal{C}^\ast(N_\ast),\delta_\ast)$ with $\mathcal{C}^\ast(N_\ast)=(\mathcal{C}^p(N_p))_{p\geq 0}$ and $
\delta_\ast=(\delta_p)_{p\geq 0}$ for the complex in (\ref{E:elemcomplex}).
\begin{remark}
The complex $(\mathcal{C}^\ast(N_\ast),\delta_\ast)$ is \enquote{non-local} in the sense that the operators $\delta_p$ are non-local.
\end{remark}

\begin{remark}
If $\mathcal{A}$ is a vector subspace of $\mathcal{C}$, then the spaces $\mathcal{A}^p(N_p)$, analogously defined using (\ref{E:Cp}) and (\ref{E:restrict}), are subspaces of the spaces $\mathcal{C}^p(N_p)$,
we have 
\[\delta_p:\mathcal{A}^p(N_p)\to \mathcal{A}^{p+1}(N_{p+1}),\quad p\geq 0,\] 
and $(\mathcal{A}^\ast(N_\ast),\delta_\ast)$ is a subcomplex of $(\mathcal{C}^\ast(N_\ast),\delta_\ast)$.
\end{remark}

With the agreement that $\delta_{-1}:=0$, we write 
\begin{equation}\label{E:elemcoho}
H^p\mathcal{C}^\ast(N_\ast):=\ker \delta_p|_{\mathcal{C}^p(N_p)} / \im \delta_{p-1}|_{\mathcal{C}^p(N_p)}.
\end{equation}
for the $p$-th cohomology of (\ref{E:elemcomplex}), $p\geq 0$. 

\begin{remark} In the special case that $(X,\varrho)$ is a metric space and $N_\ast=N_\ast(\varepsilon)$ is as defined in (\ref{E:boundedrange}) the parameter $\varepsilon>0$ determines the metric scale at which these cohomologies can detect structural features of $X$, see for instance \cite{BSSS12, Genton, HinzKommer, SS12}. In particular, it is well-known that for $\varepsilon>\diam(X)$ the cohomologies $H^p\mathcal{C}^\ast(N_\ast(\varepsilon))$, $p\geq 1$, are all trivial. 
\end{remark}

We recall \cite[Proposition 3.2]{HinzKommer}, which is quickly seen.

\begin{proposition}\label{P:simplicial}
If $\mathcal{C}$ is an algebra, then for any $p\geq 1$ we have 
\begin{equation}\label{E:simplicial}
\mathcal{C}^{p}=\lin \{ \overline{g}\:\delta_{p-1}\Alt_p(f_1\otimes \dots\otimes f_p): g\in \mathcal{C}\oplus \mathbb{R} \ \text{and}\ f_1,\dots, f_p \in \mathcal{C}\},
\end{equation}
where 
\[\overline{g}(x_0,x_1,...,x_p):=\frac{1}{p+1}\sum_{i=0}^p g(x_i),\quad g\in \mathcal{C}\oplus \mathbb{R}.\]
Moreover, for any $g\in \mathcal{C}\oplus \mathbb{R}$ and $f_1, ..., f_p \in\mathcal{C}$ the identities
\begin{multline}\label{E:deltaact0}
(p+1)\bar{g}\delta_{p-1}\Alt_p(f_1\otimes \dots\otimes f_p)\\
=\Alt_{p+1}(g\otimes f_1\otimes \dots\otimes f_p)+\sum_{k=1}\Alt_{p+1}(\mathbf{1}\otimes f_1\otimes \cdots\otimes (f_kg)\otimes \cdots \otimes f_p)
\end{multline}
and 
\begin{equation}\label{E:deltaact}
\delta_p(\bar{g}\delta_{p-1}\Alt_p(f_1\otimes \dots\otimes f_p)) = \delta_p\Alt_{p+1}(g\otimes f_1\otimes \dots\otimes f_p)
\end{equation}
hold.
\end{proposition}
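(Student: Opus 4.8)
The plan is to reduce the whole statement to the Leibniz-type identity (\ref{E:deltaact0}), proving that first and then reading off (\ref{E:deltaact}) and (\ref{E:simplicial}) as consequences. For (\ref{E:deltaact0}) the key observation is that the averaging function $\overline{g}$ is symmetric in all $p+1$ of its arguments and therefore pulls out of the antisymmetrizer: for any symmetric $h:X^{p+1}\to\mathbb{R}$ and any $F:X^{p+1}\to\mathbb{R}$ one has $\Alt_{p+1}(hF)=h\,\Alt_{p+1}(F)$, since in the defining sum over $\mathcal{S}_{p+1}$ each value $h(x_{\pi(0)},\dots,x_{\pi(p)})$ equals $h(x_0,\dots,x_p)$. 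Combining this with (\ref{E:deltaalttensor}), which writes $\delta_{p-1}\Alt_p(f_1\otimes\dots\otimes f_p)$ as a scalar multiple of $\Alt_{p+1}(\mathbf 1\otimes f_1\otimes\dots\otimes f_p)$, the claim reduces to a pointwise identity for $\overline{g}\cdot(\mathbf 1\otimes f_1\otimes\dots\otimes f_p)$: evaluated at $(x_0,\dots,x_p)$ this product equals $\tfrac{1}{p+1}\big(\sum_{i=0}^p g(x_i)\big)f_1(x_1)\cdots f_p(x_p)$, and splitting off the $i=0$ summand from the $i=1,\dots,p$ summands displays it, up to the factor $\tfrac{1}{p+1}$, as the tensor $g\otimes f_1\otimes\dots\otimes f_p$ plus the $p$ tensors obtained by replacing the leading $g$ by $\mathbf 1$ and one factor $f_k$ by $f_kg$. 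Applying $\Alt_{p+1}$ to this identity and keeping track of the combinatorial constants gives (\ref{E:deltaact0}).

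Given (\ref{E:deltaact0}), the identity (\ref{E:deltaact}) follows by applying $\delta_p$ to both sides: by (\ref{E:deltaalttensor}) each term $\Alt_{p+1}(\mathbf 1\otimes f_1\otimes\dots\otimes(f_kg)\otimes\dots\otimes f_p)$ is proportional to $\delta_{p-1}\Alt_p(f_1\otimes\dots\otimes(f_kg)\otimes\dots\otimes f_p)$, hence lies in $\im\delta_{p-1}$ and is annihilated by $\delta_p$ on account of (\ref{E:deltadeltanull}), so only $\delta_p\Alt_{p+1}(g\otimes f_1\otimes\dots\otimes f_p)$ remains. For (\ref{E:simplicial}) I would argue by mutual inclusion. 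That the right-hand side of (\ref{E:simplicial}) lies in $\mathcal{C}^p$ is immediate from (\ref{E:deltaact0}): its leading term $\Alt_{p+1}(g\otimes f_1\otimes\dots\otimes f_p)$ is a generator of $\mathcal{C}^p$ in the sense of (\ref{E:Cp}) because $g\in\mathcal{C}\oplus\mathbb{R}$, and since $\mathcal{C}$ is an algebra each product $f_kg$ again belongs to $\mathcal{C}$ (write $g=h+c$ with $h\in\mathcal{C}$, $c\in\mathbb{R}$; then $f_kg=f_kh+cf_k\in\mathcal{C}$), so the remaining terms $\Alt_{p+1}(\mathbf 1\otimes f_1\otimes\dots\otimes(f_kg)\otimes\dots\otimes f_p)$ are generators of $\mathcal{C}^p$ as well. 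For the reverse inclusion, note first that $\overline{\mathbf 1}\equiv 1$, so by (\ref{E:deltaalttensor}) every $\Alt_{p+1}(\mathbf 1\otimes h_1\otimes\dots\otimes h_p)$ with $h_1,\dots,h_p\in\mathcal{C}$ — hence, using the algebra property once more, also every $\Alt_{p+1}(\mathbf 1\otimes f_1\otimes\dots\otimes(f_kf_0)\otimes\dots\otimes f_p)$ — already belongs to the right-hand side of (\ref{E:simplicial}); solving (\ref{E:deltaact0}) with $g=f_0$ for $\Alt_{p+1}(f_0\otimes f_1\otimes\dots\otimes f_p)$ then expresses an arbitrary generator (\ref{E:Cp}) of $\mathcal{C}^p$ as a linear combination of $\overline{f_0}\,\delta_{p-1}\Alt_p(f_1\otimes\dots\otimes f_p)$ and terms just shown to lie there, which completes the inclusion and hence the proof.

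I do not expect a real obstacle: the assertion is elementary and, as the surrounding text notes, quickly seen. The only points that call for a little care are the bookkeeping of the combinatorial factors in the passage from the pointwise tensor identity to (\ref{E:deltaact0}), and the consistent treatment of the distinguished first slot of an elementary $p$-function, which ranges over $\mathcal{C}\oplus\mathbb{R}$ rather than $\mathcal{C}$; it is precisely there that the algebra structure of $\mathcal{C}$ is used, to keep the products $f_kg$ inside $\mathcal{C}$.
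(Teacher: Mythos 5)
Your strategy is the natural one and, since the paper itself gives no argument for this proposition (it is recalled from \cite{HinzKommer} as \enquote{quickly seen}), it is also essentially the only reasonable route: pull the symmetric factor $\overline{g}$ through the antisymmetrizer, invoke (\ref{E:deltaalttensor}), split the sum defining $\overline{g}$ pointwise, deduce (\ref{E:deltaact}) from (\ref{E:deltadeltanull}), and get (\ref{E:simplicial}) by mutual inclusion using the algebra property to keep $f_kg$ and $f_kf_0$ inside $\mathcal{C}$. All the steps you carried out in detail are correct.

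The one step you merely asserted -- \enquote{keeping track of the combinatorial constants gives (\ref{E:deltaact0})} -- is precisely where the bookkeeping does not deliver the displayed formula. With the paper's normalizations ($\Alt_{p+1}$ carrying $1/(p+1)!$ and $\overline{g}$ carrying $1/(p+1)$), your own computation gives $\overline{g}\,\delta_{p-1}\Alt_p(f_1\otimes\cdots\otimes f_p)=(p+1)\Alt_{p+1}\big(\overline{g}\cdot(\mathbf{1}\otimes f_1\otimes\cdots\otimes f_p)\big)$, and the pointwise splitting of $\overline{g}$ contributes exactly the compensating factor $\tfrac{1}{p+1}$, so that what comes out is
\[\overline{g}\,\delta_{p-1}\Alt_p(f_1\otimes\cdots\otimes f_p)=\Alt_{p+1}(g\otimes f_1\otimes\cdots\otimes f_p)+\sum_{k=1}^p\Alt_{p+1}\big(\mathbf{1}\otimes f_1\otimes\cdots\otimes(f_kg)\otimes\cdots\otimes f_p\big),\]
i.e. the identity \emph{without} the prefactor $(p+1)$. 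A direct check at $p=1$ shows that (\ref{E:deltaact0}) as printed is off by exactly this factor (both sides then differ by $2$), and the printed version would also contradict (\ref{E:deltaact}) upon applying $\delta_p$; so the $(p+1)$ in the display is evidently a misprint, and the constant-free identity is what your argument actually proves. You should state this explicitly rather than waving the constants through: as written, your deduction of (\ref{E:deltaact}) \enquote{by applying $\delta_p$ to both sides} of the printed (\ref{E:deltaact0}) would leave a spurious factor $(p+1)$ on the left, whereas starting from the corrected identity it yields (\ref{E:deltaact}) verbatim. Both inclusions in your proof of (\ref{E:simplicial}) are insensitive to the constant and are fine as they stand.
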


We also recall \cite[Lemma 6.2]{HinzKommer}. Similarly as before, writing $\hat{f}_\ell$ means that $f_\ell$ is omitted.
\begin{lemma} 
For any $p\geq 1$ and $f_0,...,f_p\in \mathcal{C}\oplus \mathbb{R}$ we have
\begin{equation}\label{E:pullout}
\Alt_{p+1}(f_0\otimes f_1\otimes \cdots \otimes f_p)=\frac{1}{p+1}\sum_{\ell=0}^p(-1)^\ell f_\ell\otimes \Alt_p(f_0\otimes \cdots\otimes \hat{f}_\ell\otimes \cdots\otimes f_p). 
\end{equation}
\end{lemma}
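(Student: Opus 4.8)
The asserted identity is a purely combinatorial fact about the antisymmetrizer applied to a decomposable tensor, so the plan is to prove it directly from the definition of $\Alt_{p+1}$ as a signed average over $\mathcal{S}_{p+1}$, grouped according to which factor is evaluated at the first variable $x_0$.

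First I would write out, with $\mathcal{S}_{p+1}$ acting on $\{0,1,\dots,p\}$,
\[
\Alt_{p+1}(f_0\otimes\cdots\otimes f_p)(x_0,\dots,x_p)=\frac{1}{(p+1)!}\sum_{\sigma\in\mathcal{S}_{p+1}}\sgn(\sigma)\,f_0(x_{\sigma(0)})\cdots f_p(x_{\sigma(p)}),
\]
and partition the sum according to $\ell:=\sigma^{-1}(0)\in\{0,\dots,p\}$, i.e. the unique factor $f_\ell$ that gets evaluated at $x_0$. Pulling $f_\ell(x_0)$ out of the $\ell$-th block leaves a sum, over those $\sigma$ with $\sigma(\ell)=0$, of $\sgn(\sigma)$ times $\prod_{k\neq\ell}f_k(x_{\sigma(k)})$, in which the indices $\{\sigma(k):k\neq\ell\}$ run through $\{1,\dots,p\}$.

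The key step is the sign bookkeeping. Writing $\phi\colon\{1,\dots,p\}\to\{0,\dots,p\}\setminus\{\ell\}$ for the order-preserving bijection, each $\sigma$ with $\sigma(\ell)=0$ factors uniquely as $\sigma=\tilde\rho\circ\gamma_\ell$, where $\gamma_\ell$ is the $(\ell+1)$-cycle $(0\,1\,\cdots\,\ell)$ and $\tilde\rho$ fixes $0$ and restricts to the permutation $\rho\in\mathcal{S}_p$ of $\{1,\dots,p\}$ determined by $\rho(m)=\sigma(\phi(m))$; since $\sgn(\gamma_\ell)=(-1)^\ell$ this gives $\sgn(\sigma)=(-1)^\ell\sgn(\rho)$. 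Performing the substitution $k=\phi(m)$ turns the remaining product into $\prod_{m=1}^p g_m(x_{\rho(m)})$ with $(g_1,\dots,g_p)=(f_0,\dots,\hat{f}_\ell,\dots,f_p)$, and summing over $\rho\in\mathcal{S}_p$ reconstitutes $p!\,\Alt_p(f_0\otimes\cdots\otimes\hat{f}_\ell\otimes\cdots\otimes f_p)(x_1,\dots,x_p)$. Absorbing $p!/(p+1)!=1/(p+1)$ and recognising $f_\ell(x_0)\,\Alt_p(\cdots)(x_1,\dots,x_p)$ as $\big(f_\ell\otimes\Alt_p(f_0\otimes\cdots\otimes\hat{f}_\ell\otimes\cdots\otimes f_p)\big)(x_0,\dots,x_p)$ then yields exactly \eqref{E:pullout}.

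Conceptually there is no genuine obstacle here; the only point requiring care is the sign computation, namely identifying the permutation that moves the $\ell$-th slot to the front as an $(\ell+1)$-cycle of sign $(-1)^\ell$, while keeping the reindexing consistent with the chosen convention for $\Alt$ (the statement is in any case convention-independent, since replacing each $\sigma$ by $\sigma^{-1}$ leaves both sides unchanged). As an alternative one could argue by induction on $p$, peeling off $f_0$ via the identity $\Alt_{p+1}=\Alt_{p+1}\circ(\mathrm{id}\otimes\Alt_p)$ together with a partial-antisymmetrization lemma in the last $p$ variables, but the direct count above appears to be the most transparent route.
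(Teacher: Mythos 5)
Your proof is correct: partitioning the sum over $\mathcal{S}_{p+1}$ by the index $\ell=\sigma^{-1}(0)$, factoring $\sigma=\tilde\rho\circ\gamma_\ell$ with $\sgn(\gamma_\ell)=(-1)^\ell$, and reassembling $p!\,\Alt_p$ is exactly the standard argument for this Laplace-expansion-type identity, and your sign bookkeeping checks out. The paper itself gives no proof here (it simply recalls the lemma from \cite[Lemma 6.2]{HinzKommer}), so there is nothing to contrast with; your direct computation is the natural route and needs no changes, apart from noting (as you implicitly do) that the paper's displayed definition of $\Alt_p$ contains a typo and is to be read with permuted arguments $F(x_{\sigma(1)},\dots,x_{\sigma(p)})$.
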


\section{Localization maps in the manifold case}\label{S:manifolds}

Suppose that $X=M$ is a smooth manifold, $\mathcal{C}=C_c^\infty(M)$ and $N_\ast=(N_p)_{p\geq 0}$ is a system of diagonal neighborhoods. As usual, the tangent and cotangent space at $x\in M$ are denoted by $T_xM$ and $T_x^\ast M$, respectively.

Given $p\geq 1$ and $x\in M$, we define a linear map $\lambda_{p,x}:\mathcal{C}^p(N_p)\to \Lambda^pT_x^\ast M$ by  
\begin{equation}\label{E:lambdapx}
\lambda_{p,x}(F):=\frac{\partial^p}{\partial t_1\cdots \partial t_p}F(x,\gamma_1(t_1),...,\gamma_p(t_p))|_{t_1=...=t_p=0},\quad F\in \mathcal{C}^p(N_p),
\end{equation}
for all $v_1,...,v_p\in T_x M$, where the $\gamma_i$ are smooth curves in $M$ such that $\gamma_i(0)=x$ and $\dot{\gamma}_i(0)=v_i$, $i=1,...,p$. This construction is classical and well known, see for instance \cite[Section 1]{CM90} or \cite[Section 2]{SS12}. Given $f\in \mathcal{C}^0(N_0)=C_c^\infty(M)$, let $\lambda_{0,x}(f):=f(x)$. 

\begin{remark}\label{R:trivial}
For $p>\dim M$ we have $\Lambda^pT_x^\ast M=\{0\}$ and $\lambda_{p,x}$ is just the zero map.
\end{remark}

Similarly as before, we slightly rearrange things. Let $d_{0,x}f$ denote the differential of $f\in \mathcal{C}$ at $x\in M$.

\begin{lemma} 
Let $p\geq 1$, $g\in \mathcal{C}\otimes \mathbb{R}$, $f_1,...,f_p\in\mathcal{C}$ and $x\in M$. Then we have 
\begin{equation}\label{E:niceformula}
\lambda_{p,x}(\overline{g}\delta_{p-1}\Alt_p(f_1\otimes \cdots\otimes f_p))=g(x)d_{0,x}f_1\wedge ...\wedge d_{0,x}f_p.
\end{equation}
\end{lemma}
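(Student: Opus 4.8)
The plan is to rewrite the argument of $\lambda_{p,x}$ as an explicit determinant and then differentiate it one column at a time.

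First I would record the determinantal form of $\delta_{p-1}\Alt_p(f_1\otimes\cdots\otimes f_p)$. Combining (\ref{E:deltaalttensor}) with (\ref{E:determinant}) — or, directly, writing $\Alt_p$ as a $\tfrac1{p!}$-normalized determinant, applying $\delta_{p-1}$ by a cofactor expansion in the $x_0$-variable, and subtracting the $x_0$-column from the remaining ones — one obtains
\begin{equation*}
\delta_{p-1}\Alt_p(f_1\otimes\cdots\otimes f_p)(x_0,x_1,\dots,x_p)=\frac{1}{p!}\det\big[\big(f_i(x_j)-f_i(x_0)\big)_{i,j=1}^{p}\big].
\end{equation*}
By Proposition \ref{P:simplicial} the function $\overline{g}\,\delta_{p-1}\Alt_p(f_1\otimes\cdots\otimes f_p)$ belongs to $\mathcal{C}^p$, so its restriction to $N_p$ lies in the domain of $\lambda_{p,x}$. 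Fix $v_1,\dots,v_p\in T_xM$ and smooth curves $\gamma_i$ with $\gamma_i(0)=x$, $\dot\gamma_i(0)=v_i$; since $N_p$ is open and contains $\diag_p$, the point $(x,\gamma_1(t_1),\dots,\gamma_p(t_p))$ lies in $N_p$ for $(t_1,\dots,t_p)$ near the origin, and by the formula above the expression to be differentiated in (\ref{E:lambdapx}) is
\begin{equation*}
\overline{g}\big(x,\gamma_1(t_1),\dots,\gamma_p(t_p)\big)\cdot\frac{1}{p!}\,D(t_1,\dots,t_p),\qquad D:=\det\big[\big(f_i(\gamma_j(t_j))-f_i(x)\big)_{i,j=1}^{p}\big].
\end{equation*}

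Next I would exploit three structural facts about $D$: it is multilinear in its $p$ columns; its $j$-th column depends on $t_j$ only; and its $j$-th column vanishes identically when $t_j=0$ (because $\gamma_j(0)=x$). By multilinearity and the chain rule, differentiating each column once and setting all $t_j=0$ gives
\begin{equation*}
\frac{\partial^{p}D}{\partial t_1\cdots\partial t_p}\Big|_{t_1=\cdots=t_p=0}=\det\big[\big(d_{0,x}f_i(v_j)\big)_{i,j=1}^{p}\big].
\end{equation*}
Expanding $\partial^{p}(\overline{g}\,D)/\partial t_1\cdots\partial t_p$ by the Leibniz rule, in every summand applying at least one derivative $\partial_{t_j}$ to the factor $\overline{g}$ the $j$-th column of $D$ stays undifferentiated, hence equals $0$ at the origin, so that summand vanishes there by multilinearity. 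Thus only the summand $\overline{g}(x,\dots,x)\cdot\partial^{p}D/\partial t_1\cdots\partial t_p$ survives, and $\overline{g}(x,\dots,x)=\tfrac1{p+1}\sum_{i=0}^{p}g(x)=g(x)$. Collecting,
\begin{equation*}
\lambda_{p,x}\big(\overline{g}\,\delta_{p-1}\Alt_p(f_1\otimes\cdots\otimes f_p)\big)(v_1,\dots,v_p)=g(x)\,\frac{1}{p!}\det\big[\big(d_{0,x}f_i(v_j)\big)_{i,j=1}^{p}\big].
\end{equation*}
The right-hand side equals $g(x)\,\big(d_{0,x}f_1\wedge\cdots\wedge d_{0,x}f_p\big)(v_1,\dots,v_p)$ by the defining property of the exterior product in the normalization consistent with the antisymmetrizer $\Alt_p$ used throughout (so that $(\alpha_1\wedge\cdots\wedge\alpha_p)(v_1,\dots,v_p)=\tfrac1{p!}\det[\alpha_i(v_j)]$), and since $v_1,\dots,v_p$ were arbitrary this proves (\ref{E:niceformula}).

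The step I expect to be the main obstacle is the Leibniz bookkeeping: one has to see cleanly that every term which distributes at least one $t$-derivative onto $\overline{g}$ dies at the origin, which is exactly where the fact that each column of $D$ vanishes on $\{t_j=0\}$ is used. A secondary, purely bookkeeping point is matching the combinatorial constant — the $\tfrac1{p!}$ coming from $\Alt_p$ against the chosen normalization of $\wedge$. Everything else reduces to multilinearity of the determinant and the chain rule.
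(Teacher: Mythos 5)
Your proof is correct, but it takes a genuinely different route from the paper's. The paper argues algebraically: it records (\ref{E:simpleformula}) as the immediate consequence of (\ref{E:lambdapx}), expands $(p+1)\overline{g}\,\delta_{p-1}\Alt_p(f_1\otimes\cdots\otimes f_p)$ via (\ref{E:deltaact0}) into antisymmetrized tensors, rewrites the first of these using the pullout identity (\ref{E:pullout}), and then applies the Leibniz rule for $d_0$ to the terms containing $f_kg$, so that the $d_{0,x}g$-terms cancel and the remaining ones add up to $(p+1)g(x)\,d_{0,x}f_1\wedge\cdots\wedge d_{0,x}f_p$. You instead keep the multivariate function intact, write $\delta_{p-1}\Alt_p(f_1\otimes\cdots\otimes f_p)$ in the determinantal form given by the equality of (\ref{E:deltaalttensor}) and (\ref{E:determinant}), and differentiate $\overline{g}\cdot\frac{1}{p!}\det\big[(f_i(\gamma_j(t_j))-f_i(x))_{i,j}\big]$ directly: since the $j$-th column depends only on $t_j$ and vanishes at $t_j=0$, every Leibniz term that puts a derivative on $\overline{g}$ vanishes at the origin by multilinearity, and the sole surviving term is $\overline{g}(x,\dots,x)=g(x)$ times the differentiated determinant. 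This bypasses Proposition \ref{P:simplicial} and (\ref{E:pullout}) entirely and is arguably more direct, at the cost of the column-vanishing bookkeeping you flag; the paper's decomposition, in turn, confines all curve differentiation to the single elementary formula (\ref{E:simpleformula}) and reuses identities already established in Section \ref{S:KAS}, which is the pattern it repeats later in the Dirichlet-form setting. On your normalization worry: the convention $(\alpha_1\wedge\cdots\wedge\alpha_p)(v_1,\dots,v_p)=\frac{1}{p!}\det[\alpha_i(v_j)]$ you adopt is precisely the one under which the paper's own (\ref{E:simpleformula}) follows literally from (\ref{E:lambdapx}), so your constant agrees with the paper's implicit choice; with the determinant convention one would simply rescale, which affects nothing in Theorem \ref{T:localizationM}.
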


\begin{proof}
From (\ref{E:lambdapx}) it is obvious that 
\begin{equation}\label{E:simpleformula}
\lambda_{p,x}(g\otimes \Alt_p(f_1\otimes \cdots \otimes f_p)=g(x)d_{0,x}f_1\wedge ...\wedge d_{0,x}f_p.
\end{equation}
By (\ref{E:deltaact0}) we find that 
\begin{multline}
(p+1)\lambda_{p,x}\big(\bar{g}\delta_{p-1}\Alt_p(f_1\otimes \dots\otimes f_p)\big)\notag\\
=\lambda_{p,x}\big(\Alt_{p+1}(g\otimes f_1\otimes \dots\otimes f_p)\big)+\sum_{k=1}^p\lambda_{p,x}\big(\Alt_{p+1}(\mathbf{1}\otimes f_1\otimes \cdots\otimes (f_kg)\otimes \cdots \otimes f_p)\big).
\end{multline}
By (\ref{E:pullout}) and (\ref{E:simpleformula}) the first summand on the right-hand side is seen to equal
\begin{multline}
\lambda_{p,x}\big(g\otimes \big(\Alt_p(f_1\otimes \dots\otimes f_p)\big)+\sum_{\ell=1}^p(-1)^\ell \lambda_{p,x}\big(f_\ell\otimes \Alt_p(g\otimes f_1\otimes \cdots \otimes \hat{f}_\ell\otimes \cdots \otimes f_p)\big)\notag\\
=g(x)d_{0,x}f_1\wedge ...\wedge d_{0,x}f_p+\sum_{\ell=1}^p (-1)^\ell f_\ell(x)d_{0,x}g\wedge d_{0,x}f_1\wedge ...\wedge \widehat{d_{0,x}f_\ell}\wedge ...\wedge d_{0,x}f_p\big).
\end{multline}
Using (\ref{E:lambdapx}) and the Leibniz rule for $d_0$, it follows that 
\begin{align}
&\lambda_{p,x}\big(\Alt_{p+1}(\mathbf{1}\otimes f_1\otimes \cdots \otimes (f_kg)\otimes\cdots \otimes f_p)\big)\notag\\
&=d_{0,x}f_1\wedge ...\wedge d_{0,x}(f_kg)\wedge ...\wedge d_{0,x}f_p\notag\\
&=g(x)d_{0,x}f_1\wedge ...\wedge d_{0,x}f_k\wedge ...\wedge d_{0,x}f_p-(-1)^kf_k(x)d_{0,x}g\wedge d_{0,x}f_1\wedge ...\wedge \widehat{d_{0,x}f_k}\wedge ...\wedge d_{0,x}f_p.\notag
\end{align}
Summing over $k=1,...,p$ and adding the sum to the above, we obtain (\ref{E:niceformula}).
\end{proof}

Setting $\lambda_p(F)(x):=\lambda_ {p,x}(F)$, $x\in M$, for any $F\in \mathcal{C}^p(N_p)$, we obtain linear maps 
\begin{equation}\label{E:lambdap}
\lambda_p:\mathcal{C}^p(N_p)\to \Omega_c^p(M)
\end{equation}
for all integers $p\geq 0$.

We write $\Omega^p_c(M)$ for the space of compactly supported smooth $p$-forms on $M$ and $d_p:\Omega^p_c(M)\to \Omega^{p+1}_c(M)$ for the exterior derivative acting on $\Omega^p_c(M)$. For $p=0$ this is consistent with the above notation $d_{0,x}$ in the sense that $d_{0,x}f$ equals $d_0f$, evaluated at $x$. For $p>\dim M$ the spaces $\Omega^p_c(M)$ are trivial, Remark \ref{R:trivial}. The \emph{deRham complex with compact supports} is denoted by $(\Omega_c^\ast(M),d_\ast)$, where $\Omega_c^\ast(M)=(\Omega_c^p(M))_{p\geq 0}$ and $d_\ast=(d_p)_{p\geq 0}$. We agree to set $d_{-1}:=0$ and with this agreement, write
\begin{equation}\label{E:deRhamcoho}
H^p\Omega^\ast(M):=\ker d_p|_{\Omega^p(M)} / \im d_{p-1}|_{\Omega^{p-1}(M)}
\end{equation}
for the \emph{$p$-th deRham cohomology with compact supports}, $p\geq 0$. See \cite[Section I.1]{BT82}. 

We recall the well-known link between the elementary complex and the deRham complex in the following variant.

\begin{theorem}\label{T:localizationM} Let $M$ be a smooth manifold, $\mathcal{C}=C_c^\infty(M)$ and let $N_\ast=(N_p)_{p\geq 0}$ be a system of diagonal neighborhoods.
\begin{enumerate}
\item[(i)] For any integer $p\geq 0$ the map $\lambda_p$ in (\ref{E:lambdap}) is a linear surjection.
\item[(ii)] The family $\lambda_\ast=(\lambda_p)_{p\geq 0}$ defines a cochain map $\lambda_\ast:(\mathcal{C}^\ast(N_\ast),\delta_\ast)\to (\Omega^\ast_c(M),d_\ast)$, that is, 
\[d_p\circ \lambda_p=\lambda_{p+1}\circ \delta_p,\quad p\geq 0.\]
In particular, it induces well-defined linear maps 
\[\lambda_p^\ast:H^p \mathcal{C}^\ast(N_\ast)\to H^p\Omega^\ast_c(M),\quad p\geq 0,\] 
between the cohomologies (\ref{E:elemcoho}) and (\ref{E:deRhamcoho}).
\end{enumerate}
\end{theorem}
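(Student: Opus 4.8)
The plan is to prove part (i) and part (ii) separately, leaning on the two lemmas already recorded, namely the identity (\ref{E:niceformula}) and the simplicial description (\ref{E:simplicial}).

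For part (i), surjectivity, I would argue as follows. Every compactly supported smooth $p$-form $\omega\in\Omega_c^p(M)$ can be written, locally in a coordinate chart and then globally by a partition of unity, as a finite $C_c^\infty(M)$-linear combination of wedge products $g\,d_0f_1\wedge\cdots\wedge d_0f_p$ with $g,f_1,\dots,f_p\in C_c^\infty(M)$; this is a standard fact (one may take the $f_i$ to be coordinate functions cut off by bump functions). By (\ref{E:niceformula}) each such summand is $\lambda_p\big(\overline{g}\,\delta_{p-1}\Alt_p(f_1\otimes\cdots\otimes f_p)\big)$, and by (\ref{E:simplicial}) the argument $\overline{g}\,\delta_{p-1}\Alt_p(f_1\otimes\cdots\otimes f_p)$ lies in $\mathcal{C}^p=\mathcal{C}^p(N_p)$ after restriction to $N_p$ (note $\overline{g}$ is symmetric and the restriction to $N_p$ is harmless since $N_p\supseteq\diag_p$ and the complex is only defined on $N_p$). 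Hence $\omega$ is in the image of $\lambda_p$. For $p=0$ the map $\lambda_0$ is the identity $f\mapsto f$ on $C_c^\infty(M)$, and for $p>\dim M$ both spaces are trivial (Remark \ref{R:trivial}), so surjectivity is automatic. Linearity of $\lambda_p$ is clear from the defining formula (\ref{E:lambdapx}).

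For part (ii), the chain map property $d_p\circ\lambda_p=\lambda_{p+1}\circ\delta_p$, I would use that both sides are linear, so by Proposition \ref{P:simplicial} it suffices to check the identity on generators $F=\overline{g}\,\delta_{p-1}\Alt_p(f_1\otimes\cdots\otimes f_p)$ with $g\in\mathcal{C}\oplus\mathbb{R}$, $f_1,\dots,f_p\in\mathcal{C}$. On one hand, by (\ref{E:niceformula}) and the Leibniz rule for the exterior derivative,
\[
d_p\lambda_p(F)=d_p\big(g\,d_{0}f_1\wedge\cdots\wedge d_0f_p\big)=d_0g\wedge d_0f_1\wedge\cdots\wedge d_0f_p.
\]
On the other hand, by (\ref{E:deltaact}) we have $\delta_p F=\delta_p\Alt_{p+1}(g\otimes f_1\otimes\cdots\otimes f_p)$, and applying (\ref{E:deltaalttensor}) shifted by one degree (i.e.\ $\delta_p\Alt_{p+1}(h_0\otimes\cdots\otimes h_p)=(p+2)\Alt_{p+2}(\mathbf{1}\otimes h_0\otimes\cdots\otimes h_p)$) together with the evident analogue of (\ref{E:niceformula}) in degree $p+1$ — or more directly, rewriting $\delta_p\Alt_{p+1}(g\otimes f_1\otimes\cdots\otimes f_p)=(p+2)\Alt_{p+2}(\mathbf 1\otimes g\otimes f_1\otimes\cdots\otimes f_p)$ and using (\ref{E:simpleformula}) with the leading $\mathbf 1$ interpreted as constant $1$ — yields $\lambda_{p+1}(\delta_p F)=d_0\mathbf 1\wedge d_0g\wedge d_0f_1\wedge\cdots$... which is not quite it; instead the cleaner route is to observe directly from (\ref{E:lambdapx}) that $\lambda_{p+1,x}(\Alt_{p+2}(\mathbf 1\otimes g\otimes f_1\otimes\cdots\otimes f_p))$ vanishes on the $\mathbf 1$-slot unless one differentiates the non-constant entries, producing exactly $d_{0,x}g\wedge d_{0,x}f_1\wedge\cdots\wedge d_{0,x}f_p$ up to the normalizing factor. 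Matching the two expressions gives the claim. The degree-zero case $d_0\lambda_0=\lambda_1\delta_0$ is the statement that $d_0f$ is the differential, which is (\ref{E:niceformula}) with $p=1$, $g=\mathbf 1$ combined with (\ref{E:deltaalttensor}). Finally, since $\lambda_\ast$ is a chain map it maps cocycles to cocycles and coboundaries to coboundaries, hence descends to $\lambda_p^\ast:H^p\mathcal{C}^\ast(N_\ast)\to H^p\Omega_c^\ast(M)$.

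The main obstacle I anticipate is purely bookkeeping rather than conceptual: getting the combinatorial normalization constants right when passing between the antisymmetrizer conventions in (\ref{E:deltaalttensor})--(\ref{E:determinant}) and the wedge-product conventions, and making sure the leading $\mathbf 1$ entry is handled correctly (it contributes $0$ under differentiation in (\ref{E:lambdapx}) but still affects the combinatorics). Once the degree-$(p+1)$ analogue of the key identity (\ref{E:niceformula}) is in place — which follows by the same computation that proved (\ref{E:niceformula}), simply run in degree $p+1$ with first entry $g$ and the observation that $\lambda$ kills constant slots after one differentiation — the chain map identity drops out by comparing the two sides on generators, and part (ii) and the induced maps on cohomology are immediate.
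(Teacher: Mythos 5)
Your proposal is correct and follows essentially the same route as the paper: part (i) via a partition of unity, local coordinates, (\ref{E:restrict}), (\ref{E:simplicial}) and (\ref{E:niceformula}), and part (ii) by checking $d_p\circ\lambda_p=\lambda_{p+1}\circ\delta_p$ on generators $F=\overline{g}\,\delta_{p-1}\Alt_p(f_1\otimes\cdots\otimes f_p)$ using (\ref{E:deltaact}) and the degree-$(p+1)$ instance of (\ref{E:niceformula}) with constant weight, which is exactly the paper's computation. Your momentary detour in (ii) (the spurious $d_0\mathbf{1}\wedge\cdots$ term) resolves itself correctly, since applying (\ref{E:niceformula}) in degree $p+1$ with $\overline{g}$ replaced by $\mathbf{1}$ gives $\lambda_{p+1}(\delta_p\Alt_{p+1}(g\otimes f_1\otimes\cdots\otimes f_p))=d_0g\wedge d_0f_1\wedge\cdots\wedge d_0f_p$ directly, with no normalization left over.
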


\begin{proof}
Statement (i) is easily seen using a partition of unity and local coordinates, together with (\ref{E:restrict}), (\ref{E:simplicial}) and (\ref{E:niceformula}). To see statement (ii), it suffices to verify it for $F\in \mathcal{C}^p(N_p)$ of the form $F=\overline{g}\delta_{p-1}\Alt_p(f_1\otimes \cdots\otimes f_p)$. For such $F$ we observe that
\[d_p\circ \lambda_p(F)=d_p(gd_0f_1\wedge ...\wedge d_0f_p)=d_0g\wedge d_0f_1\wedge ...\wedge d_0f_p\]
and by (\ref{E:deltaact}) also
\[\lambda_{p+1}\circ \delta_p(F)=\lambda_{p+1}(\delta_p\Alt_{p+1}(g\otimes f_1\otimes \cdots\otimes f_p))=d_0g\wedge d_0f_1\wedge ...\wedge d_0f_p.\]
\end{proof}

The basic definition (\ref{E:lambdapx}) is based on differentiable curves. To define differential $p$-forms and localization maps $\lambda_p$ satisfying an analog of Theorem \ref{T:localizationM} on more non-smooth and possible fractal spaces, we investigate differential complexes based on local Dirichlet forms, \cite{BH91}, and then prove a non-local-to-local approximation, which is of independent interest. Identity (\ref{E:niceformula}) can then serve as a definition, and we can rely on the approximation result to ensure that this definition is correct. This point of view may be regarded as an energy based variant of (\ref{E:lambdapx}).

\section{Local complexes of differential forms}\label{S:local}

\subsection{Fields of Hilbert spaces}

We recall some terminology, notation and well-known facts around measurable fields of Hilbert spaces. Standard references are \cite{Dixmier81} and \cite{Takesaki79}.

Let $(X,\mathcal{X})$ be a measurable space. To a collection $H=(H_x)_{x\in X}$ of Hilbert spaces $(H_x, \left\langle\cdot,\cdot\right\rangle_{H_x})$ we refer as a \emph{field of Hilbert spaces on $X$}. The elements of the product $\prod_{x\in X} H_x$ are called \emph{sections of $H$}. 

A field of Hilbert spaces $H=(H_x)_{x\in X}$ on $X$, together with a subspace $\mathcal{M}(H)$ of the product $\prod_{x\in X} H_x$, is called a \emph{measurable field of Hilbert spaces on $X$}, if 
\begin{enumerate}
\item[(i)] a section $\omega=(\omega_x)_{x\in X}$ from $\prod_{x\in X} H_x$ is an element of $\mathcal{M}(H)$ if and only if the function $x\mapsto \left\langle \omega_x,\xi_x\right\rangle_{H_x}$ on $X$ is measurable for any $\xi=(\xi_x)_{x\in X}\in \mathcal{M}(H)$,
\item[(ii)] there is a countable set $\{\xi_i\}_{i=1}^\infty\subset \mathcal{M}(H)$ such that for all $x\in X$ the span of $\{\xi_{i,x}\}_{i=1}^\infty$ is dense in $H_x$.
\end{enumerate} 
See \cite[Part II, Chapter 1, Definition 1 and Remark 3]{Dixmier81}. The sections contained in $\mathcal{M}(H)$ are referred to as the \emph{measurable sections of $H$}. 

It is not difficult to see that if $(H_x)_{x\in X}$ is a measurable field of Hilbert spaces on $X$ and $A\in \mathcal{X}$, then $(H_x)_{x\in A}$, endowed with the space of measurable sections $\mathcal{M}(\widetilde{H}):=\{\xi|_A:\xi\in \mathcal{M}(H)\}$, is a measurable field of Hilbert spaces on $A$. See \cite[p. 165]{Dixmier81}.

In the sequel let $H$ be a measurable field of Hilbert spaces with space $\mathcal{M}(H)$ of measurable sections as above.

A countable set $\{\xi_i:\ i=1,2,...\}\subset \mathcal{M}(H)$ of measurable sections $\xi_i=(\xi_{i,x})_{x\in X}$ is called a \emph{measurable orthonormal frame for $H$} if at each $x\in X$ with $\dim H_x=+\infty$ the collection $\{\xi_x:\ i=1,2,...\}$ is
an orthonormal basis of $H_x$ and at each $x\in X$ with $d(x):=\dim H_x<+\infty$ the collection $\{\xi_{1,x},...,\xi_{d(x),x}\}$ is an orthonormal basis of $H_x$ and $\xi_{i,x}=0$, $i>d(x)$. A measurable orthonormal frame for $H$ always exists, \cite[Part II, Chapter 1, Proposition 1]{Dixmier81}.

It is easily seen that if $\omega=(\omega_x)_{x\in X}$ is an element of $\mathcal{M}(H)$ and $g$ is a measurable function on $X$, then $g\omega:=(g(x)\omega_x)_{x\in X}$ again defines an element of $\mathcal{M}(H)$. 

Now let $\mu$ be a $\sigma$-finite measure on $X$. Let $L^0(X,\mu)$ denote the space of $\mu$-equivalence classes of measurable functions on $X$ and $L^0(X,H,\mu)$ the space of $\mu$-equivalence classes of measurable sections of $H$. Then, given $g\in L^0(X,\mu)$ and $\omega\in L^0(X,H,\mu)$, the product $g\omega$ is again a well-defined element of $ L^0(X,H,\mu)$. Given $\omega,\xi\in L^0(X,H,\mu)$, we agree to 
\begin{equation}\label{E:agreement}
\text{write $\left\langle \omega,\xi\right\rangle_H$ for the well-defined $\mu$-equivalence class $x\mapsto \left\langle \omega_x,\xi_x\right\rangle_{H_x}$,}
\end{equation}
where $(\omega_x)_{x\in X}$ and $(\xi_x)_{x\in X}$ are arbitrary versions of the $\mu$-equivalence classes $\omega$ and $\xi$.

We say that two measurable fields of Hilbert spaces $H=(H_x)_{x\in X}$ and $\widetilde{H}=(\widetilde{H}_x)_{x\in X}$ on $X$ are \emph{essentially isometric} if there are a set $N\in \mathcal{X}$ with $\mu(N)=0$ and a collection $\Phi=\{\Phi_x\}_{x\in X\setminus N}$ of linear isometries $\Phi_x:H_x\to \widetilde{H}_x$, $x\in X\setminus N$, such that a section $\omega=(\omega_x)_{x\in X\setminus N}$ of 
$(H_x)_{x\in X\setminus N}$ is measurable if and only if $(\Phi_x(\omega_x))_{x\in X\setminus N}$ is a measurable section of $(\widetilde{H}_x)_{x\in X\setminus N}$. We then call $\Phi$ an \emph{essential isometry} from $H$ onto $\widetilde{H}$ \emph{outside $N$}. 

A measurable section $\omega=(\omega_x)_{x\in X}\in \mathcal{M}(H)$ is said to be square integrable with respect to $\mu$ if
\[\left\|\omega\right\|_{L^2(X,H,\mu)}^2:=\int_X\left\|\omega_x\right\|_{H_x}^2\mu(dx)<+\infty.\]
We write $(L^2(X,H,\mu), \left\|\cdot\right\|_{L^2(X,H,\mu)})$ for the \emph{direct integral of $H$}, that is, the Hilbert space of $\mu$-equivalence classes of square integrable sections. See \cite[Part II, Chapter 1, Section 5]{Dixmier81} or \cite[Chapter IV, Section 8]{Takesaki79}. If $\omega\in L^2(X,H,\mu)$ and $g\in L^\infty(X,\mu)$, then $g\omega\in L^2(X,H,\mu)$. If $\Phi$ is an essential isometry from $H$ onto $\widetilde{H}$ outside some set of zero measure, then $\Phi$ induces a unique linear isometry from $L^2(X,H,\mu)$ onto $L^2(X,\widetilde{H},\mu)$.

Given $1\leq r\leq +\infty$, we can introduce Banach spaces $L^r(X,T^\ast X,\mu)$ of $\mu$-equivalence classes of $r$-integrable (respectively essentially bounded) sections in a straightforward manner, see for instance \cite[Section 3]{HKM20} or  \cite[Section 6]{HRT13}.

\subsection{Local Dirichlet forms}\label{SS:Dirichletforms}

Let $(X,\mathcal{X},\mu)$ be a $\sigma$-finite measure space. By a \emph{quadratic form} $(\mathcal{E},\mathcal{D}(\mathcal{E}))$ on $L^2(X,\mu)$ we mean a densely defined and nonnegative definite symmetric bilinear form on $L^2(X,\mu)$. A quadratic form $(\mathcal{E},\mathcal{D}(\mathcal{E}))$ on $L^2(X,\mu)$ is said to be \emph{closed} if $\mathcal{D}(\mathcal{E})$, endowed with the norm 
\[f\mapsto (\mathcal{E}(f)+\|f\|_{L^2(X,\mu)}^2)^{1/2},\] 
is a Hilbert space. In general the map $\mapsto \mathcal{E}(f)^{1/2}$ is a only seminorm on $\mathcal{D}(\mathcal{E})$. A  quadratic form $(\mathcal{E},\mathcal{D}(\mathcal{E}))$ on $L^2(X,\mu)$ is said to have the \emph{Markov property} if for any 
Lipschitz function $F:\mathbb{R}\to \mathbb{R}$  with $F(0)=0$ and $f\in \mathcal{D}(\mathcal{E})$ we have $F(f)\in \mathcal{D}(\mathcal{E})$ and $\mathcal{E}(F(f))\leq \lip(F)^2\mathcal{E}(f)$. A \emph{Dirichlet form} $(\mathcal{E},\mathcal{D}(\mathcal{E}))$ on $L^2(X,\mu)$ is a closed quadratic form $(\mathcal{E},\mathcal{D}(\mathcal{E}))$ on $L^2(X,\mu)$ which has the Markov property. See \cite[Chapter I, Section 1]{BH91}.

Let $(\mathcal{E},\mathcal{D}(\mathcal{E}))$ be a Dirichlet form on $L^2(X,\mu)$ and let $b\mathcal{X}$ denote the space of bounded measurable functions. We write 
\begin{equation}\label{E:B}
\mathcal{B}:=b\mathcal{X}\cap \mathcal{D}(\mathcal{E})
\end{equation} 
for the space of bounded measurable functions on $X$ whose $\mu$-equivalence classes 
are elements of $\mathcal{D}(\mathcal{E})$ and, given $f\in \mathcal{B}$, understand the notation $\mathcal{E}(f)$
as the evaluation of $\mathcal{E}$ at the $\mu$-equivalence class of $f$. The space $\mathcal{B}$ is an algebra, \cite[Chapter I, Corollary 3.3.2]{BH91}.

We say that $(\mathcal{E},\mathcal{D}(\mathcal{E}))$ admits a \emph{carr\'e du champ} $\Gamma$ if for any $f\in \mathcal{B}$ there is some $\Gamma(f)\in L^1(X,\mu)$ such that 
\begin{equation}\label{E:carredef}
\mathcal{E}(fh,f)-\frac12\mathcal{E}(f^2,h)=\int_Xh\:\Gamma(f)\:d\mu,\quad h\in \mathcal{B},
\end{equation}
\cite[Chapter I, Definition 4.1.2]{BH91}. By polarization and approximation $\Gamma$ extends to a continuos nonnegative definite symmetric bilinear map $\Gamma$ from $\mathcal{D}(\mathcal{E})\times \mathcal{D}(\mathcal{E})$
into $L^1(X,\mu)$.

A Dirichlet form $(\mathcal{E},\mathcal{D}(\mathcal{E}))$ is called \emph{local} if for any $f\in \mathcal{D}(\mathcal{E})$ and any $F,G\in C_c^\infty(\mathbb{R})$ with disjoint supports we have $\mathcal{E}(F(f)-F(0),G(f)-G(0))=0$, \cite[Chapter I, Definition 5.1.2]{BH91}. If it is local and admits a carr\'e du champ $\Gamma$, then for any $f\in \mathcal{D}(\mathcal{E})$ we have 
\begin{equation}\label{E:totalenergy}
\mathcal{E}(f)=\int_X\Gamma(f)\:d\mu,
\end{equation}
\cite[Chapter I, Proposition 6.1.1]{BH91}. Under the same hypotheses a chain rule holds: Given $f_1,...,f_m, g_1,...,g_n\in \mathcal{D}(\mathcal{E})$, $F\in C^1(\mathbb{R}^m)$ with $F(0)=0$ and $G\in C^1(\mathbb{R}^n)$ with $G(0)=0$ and such that both $F$ and $G$ have uniformly bounded partial derivatives, the identity
\begin{equation}\label{E:chainrule}
\Gamma(F(f_1,...,f_m),G(g_1,...,g_n))=\sum_{i=1}^m\sum_{j=1}^n \frac{\partial F}{\partial x_i}(f_1,...,f_m)\frac{\partial G}{\partial x_j}(g_1,...,g_n)\Gamma(f_i,f_j)
\end{equation}
holds in the $\mu$-a.e. sense. See \cite{LeJan78} or the expositions in \cite[Chapter I, Corollary 6.1.3]{BH91} or \cite[Theorem 3.2.2]{FOT94}. In the special case $m=n=1$ this remains true for Lipschitz $F$ and $G$ with  $F(0)=0$ and $G(0)=0$, \cite[Chapter I, Corollary 7.1.2]{BH91}.  

The main assumption in this section is the following.

\begin{assumption}\label{A:Dirichletform}
We assume that $(X,\mathcal{X},\mu)$ is a $\sigma$-finite measure space, that $L^2(X,\mu)$ is separable and that $(\mathcal{E},\mathcal{D}(\mathcal{E}))$ is a local Dirichlet form on $L^2(X,\mu)$ admitting a carr\'e du champ $\Gamma$.  
\end{assumption}

Let Assumption \ref{A:Dirichletform} be in force and recall (\ref{E:B}). We call a pair $(H,d_0)$ a \emph{first order structure associated with $(\mathcal{E},\mathcal{D}(\mathcal{E}))$} if 
\begin{enumerate}
\item[(i)] $H=(H_x)_{x\in X}$ is a measurable field of Hilbert spaces on $X$,
\item[(ii)] $d_0$ is a linear map from $\mathcal{B}$ into $L^2(X,H,\mu)$ such that 
\begin{equation}\label{E:Gammaae}
\left\langle d_0f,d_0g\right\rangle_{H_x}=\Gamma(f,g)(x)\quad \text{for $\mu$-a.e. $x\in X$,}
\end{equation}
\item[(iii)] the space $\lin \{gd_0f: f\in \mathcal{B}\}$ is dense in $L^2(X,H,\mu)$.
\end{enumerate}
We call two first order structures $(H,d_0)$ and $(H',d_0')$ associated with $(\mathcal{E},\mathcal{D}(\mathcal{E}))$ \emph{essentially isometric} if there is an essential isometry $\Phi=\{\Phi_x\}_{x\in X\setminus N}$ from $H$ onto $H'$ outside  a $\mu$-null set $N\in \mathcal{X}$ and for any $f\in \mathcal{B}$ we have $d_0'f=\Phi(d_0f)$ in the $\mu$-a.e. sense.

Our starting point is the following variant of \cite[Chapter III, Theorem 3.11]{Eberle99}.  

\begin{theorem}\label{T:Eberle} Let Assumption \ref{A:Dirichletform} be in force. There is a first order structure $(T^\ast X,d_0)$ associated with $(\mathcal{E},\mathcal{D}(\mathcal{E}))$, and it is unique up to essential isometry. We have 
\begin{equation}\label{E:productrule}
d_0(fg)=fd_0g+gd_0f,\quad f,g\in \mathcal{B}.
\end{equation}
\end{theorem}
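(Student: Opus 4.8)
The plan is to realize $(T^\ast X,d_0)$ by the carr\'e-du-champ factorization of \cite[Chapter~III]{Eberle99}: build the fibres $T_x^\ast X$ by a pointwise quotient procedure and then set $(d_0f)_x:=[f]$. First I would fix an enumeration $\mathcal{B}_0=\{f_k:k\geq 1\}$ of a countable $\mathbb{Q}$-linear subspace of $\mathcal{B}$ that is dense in $\mathcal{D}(\mathcal{E})$ in the form norm; this exists because $L^2(X,\mu)$, and hence $\mathcal{D}(\mathcal{E})$ with the form norm, is separable, and because $\mathcal{B}$ is dense in $\mathcal{D}(\mathcal{E})$ (truncating $f\in\mathcal{D}(\mathcal{E})$ by Lipschitz functions vanishing at $0$ converges to $f$ by the Markov property). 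For the countably many pairs $f,g\in\mathcal{B}_0$ I would choose fixed Borel versions of $\Gamma(f,g)$, and after deleting a single $\mu$-null set $N$ arrange that for every $x\notin N$ the map $(f,g)\mapsto\Gamma(f,g)(x)$ is a nonnegative definite symmetric $\mathbb{Q}$-bilinear form on $\mathcal{B}_0$ (only countably many instances of symmetry, of $\mathbb{Q}$-bilinearity, and of $\Gamma(f,f)(x)\geq 0$ are involved). For $x\notin N$ let $T_x^\ast X$ be the real Hilbert space obtained by completing the quotient of $\mathcal{B}_0$ modulo $\{f:\Gamma(f,f)(x)=0\}$ with this inner product, and set $T_x^\ast X:=\{0\}$ for $x\in N$. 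Put $(d_0f)_x:=[f]$ for $f\in\mathcal{B}_0$, $x\notin N$, and $(d_0f)_x:=0$ for $x\in N$. By construction $x\mapsto\langle d_0 f_k,d_0 f_\ell\rangle_{T_x^\ast X}=\Gamma(f_k,f_\ell)(x)$ is measurable and $\{(d_0 f_k)_x\}_k$ has dense span in $T_x^\ast X$ for every $x$, so the standard criterion for fundamental families of sections (see \cite[Part~II, Chapter~1]{Dixmier81}) yields a unique structure of a measurable field of Hilbert spaces $T^\ast X=(T_x^\ast X)_{x\in X}$ for which the $d_0 f_k$ are measurable sections.

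Next I would extend $d_0$ to $\mathcal{B}$ and verify the defining properties (i)--(iii), together with the product rule. For $f\in\mathcal{B}$ choose $f_{k_n}\to f$ in form norm with $f_{k_n}\in\mathcal{B}_0$; by (\ref{E:totalenergy}), $\|d_0 f_{k_n}-d_0 f_{k_m}\|_{L^2(X,T^\ast X,\mu)}^2=\mathcal{E}(f_{k_n}-f_{k_m})\to 0$, so $(d_0 f_{k_n})_n$ converges in $L^2(X,T^\ast X,\mu)$ to an element $d_0f$ independent of the chosen sequence; $d_0$ is then linear, and letting $n\to\infty$ and using continuity of $\Gamma$ into $L^1(X,\mu)$ gives (\ref{E:Gammaae}), so (i) and (ii) hold. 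For (iii): if $\omega\in L^2(X,T^\ast X,\mu)$ is orthogonal to $g\,d_0f$ for all $f,g\in\mathcal{B}$, then $\int_X g\,\langle\omega,d_0 f_k\rangle_{T^\ast X}\,d\mu=0$ for all $g\in\mathcal{B}$ and all $k$; since $\mathcal{B}$ is dense in $L^2(X,\mu)$ and $X$ is $\sigma$-finite, a truncation argument forces $\langle\omega,d_0 f_k\rangle_{T^\ast X}=0$ $\mu$-a.e.\ for each $k$, whence $\omega_x=0$ for $\mu$-a.e.\ $x$ because $\{(d_0 f_k)_x\}_k$ spans $T_x^\ast X$ densely. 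For (\ref{E:productrule}), note $fg\in\mathcal{B}$ for $f,g\in\mathcal{B}$, and expanding
\[\big\|d_0(fg)-f\,d_0g-g\,d_0f\big\|_{L^2(X,T^\ast X,\mu)}^2\]
by means of (\ref{E:Gammaae}) and simplifying with the Leibniz rule for $\Gamma$ (the instance of the chain rule (\ref{E:chainrule}) for the product map, which applies on $\mathcal{B}$ after modifying $(x,y)\mapsto xy$ outside a ball containing the ranges of $f$ and $g$) one checks that every term cancels, so the quantity equals $0$; all occurring integrals are finite because $f$ and $g$ are bounded.

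For uniqueness up to essential isometry, let $(H',d_0')$ be another first order structure associated with $(\mathcal{E},\mathcal{D}(\mathcal{E}))$. By (\ref{E:Gammaae}) applied to both structures, $\sum_i g_i\,d_0 f_i\mapsto\sum_i g_i\,d_0' f_i$ is a well-defined linear isometry from the dense subspace $\lin\{g\,d_0f:f,g\in\mathcal{B}\}$ of $L^2(X,T^\ast X,\mu)$ into $L^2(X,H',\mu)$, and it is onto by property (iii) for $H'$; hence it extends to a surjective linear isometry $U$ of the two direct integrals which is $L^\infty(X,\mu)$-linear. Since an $L^\infty(X,\mu)$-linear isometry between direct integrals is decomposable, $U$ is induced by a measurable field of fibrewise linear isometries $\Phi_x:T_x^\ast X\to H_x'$ defined outside a $\mu$-null set (see \cite[Part~II]{Dixmier81}), and $d_0'f=\Phi(d_0f)$ $\mu$-a.e.\ for all $f\in\mathcal{B}$ by construction; this is precisely essential isometry of first order structures.

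The step I expect to be the most delicate is the construction of the measurable field itself --- selecting versions of the only a.e.-defined carr\'e du champ consistently over a countable form-dense set so that the fibres, their inner products and the measurable structure are all simultaneously well-defined --- together with the disintegration used in the uniqueness part, i.e.\ the passage from an $L^\infty(X,\mu)$-linear isometry of direct integrals to a genuine measurable field of fibrewise isometries. The product rule and the verification of (i)--(iii) are, by contrast, routine consequences of (\ref{E:Gammaae}), (\ref{E:totalenergy}) and the chain rule (\ref{E:chainrule}).
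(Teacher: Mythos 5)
Your existence argument is, in all essentials, the paper's own (Eberle-style) construction: fix a countable, form-dense family in $\mathcal{B}$, choose consistent versions of $\Gamma$ so that the Gram matrices are symmetric and nonnegative definite off a null set (the paper arranges this over $\mathbb{Q}$ for all $x$, you delete one null set --- cosmetically different, equally fine), factor by the energy seminorm as in (\ref{E:factorbyenergydens})--(\ref{E:factorGamma}), obtain the measurable field via Dixmier's criterion, extend $d_0$ to $\mathcal{B}$ by the isometry (\ref{E:differentialsandenergy}), get (\ref{E:Gammaae}) by passing to the limit, prove density (iii) by the totality/truncation argument, and deduce (\ref{E:productrule}) from the Leibniz rule coming from (\ref{E:chainrule}) with the product map cut off outside a ball; this matches the paper's outline point for point. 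Where you genuinely diverge is the uniqueness part: the paper (following Eberle, pp.~166--167) builds the fibrewise isometries $\Phi_x$ directly, using (\ref{E:Gammaae}) for the countably many pairs from the dense family off a common null set and then extending by density in each fibre, whereas you first build a global surjective isometry $U$ of the direct integrals and invoke the fact that an $L^\infty(X,\mu)$-linear isometry is decomposable. That route is valid, but two small steps you assert should be justified: the generating set $\lbrace g\,d_0f\rbrace$ is only stable under multiplication by $g\in\mathcal{B}$, so $L^\infty(X,\mu)$-linearity of $U$ requires approximating a bounded multiplier $h$ (and, for $U(d_0f)=d_0'f$, the constant $\mathbf{1}$) by uniformly bounded elements of $\mathcal{B}$ converging $\mu$-a.e.\ --- available by $L^2$-density plus Markov truncation and dominated convergence, using $\Gamma(f)\in L^1(X,\mu)$ as the dominating density; and one should note that a.e.\ isometry and surjectivity of the fibres $\Phi_x$, together with the two-sided measurability requirement in the paper's definition of essential isometry, follow from decomposing $U$ and $U^\ast$ simultaneously. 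The paper's direct fibrewise construction avoids the decomposability machinery at the cost of bookkeeping with null sets; your version buys a cleaner global statement at the cost of these module-linearity checks. With those details added, the proposal is a complete proof.
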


\begin{remark}\mbox{}
\begin{enumerate}
\item[(i)] The fibers $T^\ast_x X$ of $T^\ast X=(T_x^\ast X)_{x\in X}$ may be seen as \emph{generalized cotangent spaces} and the operator $d_0$ may be seen as a \emph{generalized exterior derivative} of order zero. 
\item[(ii)] Assumption \ref{A:Dirichletform} seems convenient since it covers a variety of applications. Under more restrictive hypotheses one can adopt a point of view similar to \cite[Definitions 3.1.1 and 3.1.3]{BGL14} and start from an algebra of functions and a local carr\'e du champ operator $\Gamma$ whose values are pointwise defined functions. One can then use \cite[Chapter III, Theorem 3.9]{Eberle99} to obtain less technical variants of Theorem \ref{T:Eberle} and subsequent results. This has been implemented in \cite{Kommer}. 
\item[(iii)] Under the stated assumptions the operator $d_0$ extends to a closed unbounded linear operator $d_0$ from 
$L^2(X,\mu)$ into $L^2(X,T^\ast X,\mu)$ with domain $\mathcal{D}(\mathcal{E})$, and the closedness of the operator $(d_0,\mathcal{D}(\mathcal{E}))$ is equivalent to the closedness of $(\mathcal{E},\mathcal{D}(\mathcal{E}))$. Up to an essential isometry of first order structures, the direct integral $L^2(X, T^\ast X,\mu)$ and the operator $d_0$ agree with the Hilbert space and the derivation introduced in \cite{CS03}, see for instance \cite[Theorem 2.1 and Corollary 2.5]{HRT13}. A related approach was investigated in \cite{W00}, see in particular \cite[Definition 52, Proposition 57 and Theorem 60]{W00}.
\end{enumerate}
\end{remark}

An outline of the proof of Theorem \ref{T:Eberle} can be found in \cite[Chapter V, Exercise 5.9]{BH91}, a complete proof is given in \cite[p. 164--167]{Eberle99}. To prepare notation for later arguments, we recall some key elements of this proof: Under Assumption \ref{A:Dirichletform} the space $\mathcal{D}(\mathcal{E})$ is separable with respect to the seminorm $\mathcal{E}^{1/2}$. Using the Markov property, we can find a sequence $(\varphi_k)_{k\geq 1}\subset \mathcal{B}$ whose span 
\begin{equation}\label{E:spaceA}
\mathcal{A}:=\lin (\{\varphi_k\}_{k\geq 1})
\end{equation}
is $\mathcal{E}^{1/2}$-dense in $\mathcal{B}$, see \cite[Theorem 1.4.2]{FOT94}. Let $x\mapsto \widetilde{\Gamma}(\varphi_i,\varphi_j)(x)$ be versions of the $\mu$-equivalence classes $\Gamma(\varphi_i,\varphi_j)$ such that for any $N\in \mathbb{N}$ and $x\in X$ the matrix $(\widetilde{\Gamma}(\varphi_i,\varphi_j))_{i,j=1}^N$ is symmetric and non-negative definite over $\mathbb{Q}^N$ and consequently also over $\mathbb{R}^N$. Then for any $u,v\in \mathcal{A}$ the map $x\mapsto \widetilde{\Gamma}(u,v)(x)$ is well-defined as a version of $\Gamma(u,v)$, and for each $x\in X$ the map $(u,v)\mapsto \widetilde{\Gamma}(u,v)(x)$ is a nonnegative definite symmetric bilinear form on $\mathcal{A}$. Now let $x\in X$ be fixed. The definition
\begin{equation}\label{E:factorbyenergydens}
\left\|u\right\|_{T_x^\ast X}:=\big(\widetilde{\Gamma}(u,u)(x)\big)^{1/2},\quad u\in \mathcal{A},
\end{equation}
gives a pre-Hilbert norm on the quotient space $\mathcal{A}/\ker \left\|\cdot\right\|_{T_x^\ast X}$. We write $(T_x^\ast X,\left\langle \cdot,\cdot\right\rangle_{T_x^\ast X})$ for the Hilbert space obtained as the completion of the quotient with respect to 
$\left\|\cdot\right\|_{T_x^\ast X}$. Given $f\in \mathcal{A}$, we denote its equivalence class in $\mathcal{A}/\ker \left\|\cdot\right\|_{T_x^\ast X}$ by $d_{0,x}f$. It follows that 
\begin{equation}\label{E:factorGamma}
\left\langle d_{0,x}f,d_{0,x}g\right\rangle_{T_x^\ast X}=\widetilde{\Gamma}(f,g)(x),\quad f,g\in \mathcal{A}.
\end{equation}
This defines a field 
\[T^\ast X:=(T_x^\ast X)_{x\in X}\] 
of Hilbert spaces on $X$. When endowed with the space $\mathcal{M}(T^\ast X)$ of all sections $\omega\in \prod_{x\in X} T_x^\ast X$ for which all functions $x\mapsto \left\langle \omega_x, d_{0,x}f\right\rangle_{T_x^\ast M}$, $f\in \mathcal{A}$, are measurable, $T^\ast X$ becomes a measurable field of Hilbert spaces on $X$. This follows from \cite[Part II, Chapter 1, Section 4, Proposition 4]{Dixmier81}. Since for any $f\in \mathcal{A}$ the section $d_0f:=(d_{0,x}f)_{x\in X}$ is in $\mathcal{M}(T^\ast X)$ and
\begin{equation}\label{E:differentialsandenergy}
\left\|d_0f\right\|_{L^2(X,T^\ast X,\mu)}^2=\mathcal{E}(f)
\end{equation}
by (\ref{E:totalenergy}) and (\ref{E:factorGamma}), the map $f\mapsto d_0f$ extends uniquely to a linear map $d_0$ from 
$\mathcal{B}$ into $L^2(X,T^\ast X,\mu)$ which satisfies (\ref{E:differentialsandenergy}) for all $f\in \mathcal{B}$. Taking limits in (\ref{E:factorGamma}) gives (\ref{E:Gammaae}). The density of $\lin\{gd_0f:f,g\in \mathcal{B}\}$ in $L^2(X,T^\ast X,\mu)$ is easily seen using approximation and a totality argument, the
product rule (\ref{E:productrule}) follows from (\ref{E:chainrule}). The proof of uniqueness up to essential isometry uses (\ref{E:Gammaae}), the separability of $\mathcal{B}$ and the aforementioned density, see \cite[p. 166/167]{Eberle99}. 

\begin{remark}
As a particular consequence of Theorem \ref{T:Eberle}, the pair $(T^\ast X, d)$, determined up to essential isometry, is independent of the choice of the sequence $(\varphi_k)_{k\geq 1}$.
\end{remark}

\subsection{Local complexes}

In what follows let Assumption \ref{A:Dirichletform} be in force, let $(T^\ast X,d_0)$ be a first order structure associated with $(\mathcal{E},\mathcal{D}(\mathcal{E}))$ as in Theorem \ref{T:Eberle}, and let $\mathcal{A}$ be as in (\ref{E:spaceA}). 

For any fixed $x\in X$ and integer $p\geq 1$ let $\Lambda^p T_x^\ast X$ be the $p$-fold exterior product of the vector space $T_x^\ast X$. We endow $\Lambda^p T_x^\ast X$ with the scalar product
\begin{equation}\label{E:extprodspace}
\left\langle v_1\wedge ... \wedge v_p, w_1\wedge ... \wedge w_p\right\rangle_{\Lambda^p T_x^\ast X}:=\det[(\left\langle v_i,w_j\right\rangle_{T_x^\ast X})_{i,j=1}^p],
\end{equation}
$v_i,w_j\in T_x^\ast X$, and write $\hat{\Lambda}^pT_x^\ast X$ for the completion of $\Lambda^pT_x^\ast X$ with respect to $\left\langle\cdot,\cdot\right\rangle_{\Lambda^pT_x^\ast X}$, see for instance \cite[Chapter V, Section 1]{Temam97}. Obviously $\hat{\Lambda}^1T_x^\ast X=T_x^\ast X$. 

Using (\ref{E:extprodspace}) it is straightforward to see that $\lin\{d_{0,x}f_1\wedge ... \wedge d_{0,x}f_p:\ f_1,...,f_p\in \mathcal{A}\}$ is dense in $\Lambda^p T_x^\ast X$ and therefore also in $\hat{\Lambda}^p T_x^\ast X$. Set $\mathcal{M}^1(T^\ast X):=\mathcal{M}(T^\ast X)$ and for $p\geq 2$, let $\mathcal{M}^p(T^\ast X)$ denote the space of all elements $\omega=(\omega_x)_{x\in X}$ of $\prod_{x\in X} \hat{\Lambda}^p T_x^\ast X$ for which all functions $x\mapsto \left\langle \omega_x,d_{0,x}f_1\wedge ...\wedge d_{0,x}f_p\right\rangle_{\Lambda^p T_x^\ast X}$, $f_1,...,f_p\in\mathcal{A}$, are measurable. For any $p\geq 1$ we set
\begin{equation}\label{E:OmegapA}
\Omega^p(\mathcal{A}):=\lin\left\lbrace g\:d_0f_1\wedge ...\wedge d_0f_p:\ f_1,...,f_p\in \mathcal{A},\ g\in \mathcal{A}\oplus \mathbb{R}\right\rbrace.
\end{equation}

\begin{lemma}\label{L:Lambdameasfield}
Let Assumption \ref{A:Dirichletform} be in force. For any $p\geq 1$ and $x\in X$ the collection $\hat{\Lambda}^p T^\ast X=(\hat{\Lambda}^p T_x^\ast X)_{x\in X}$, endowed with $\mathcal{M}^p(T^\ast X)$ as space of measurable sections, is a measurable field of Hilbert spaces on $X$. Moreover, $\Omega^p(\mathcal{A})\subset \mathcal{M}^p(T^\ast X)$.
\end{lemma}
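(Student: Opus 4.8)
The plan is to invoke \cite[Part II, Chapter 1, Section 4, Proposition 4]{Dixmier81}, in exactly the same way it was used to construct $(T^\ast X,d_0)$ after Theorem \ref{T:Eberle}. The whole argument is uniform in $p\geq 1$; for $p=1$ it reduces to facts already recorded (since $\hat\Lambda^1 T_x^\ast X = T_x^\ast X$ and $\mathcal{M}^1(T^\ast X)=\mathcal{M}(T^\ast X)$). First I would fix a countable family of test sections: for $p$-tuples $\mathbf{k}=(k_1,\dots,k_p)$ of positive integers put $\eta_{\mathbf{k}}:=(d_{0,x}\varphi_{k_1}\wedge\cdots\wedge d_{0,x}\varphi_{k_p})_{x\in X}$, where $(\varphi_k)_{k\geq 1}$ spans $\mathcal{A}$ as in (\ref{E:spaceA}). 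Each $\eta_{\mathbf{k}}$ is a section of $\hat\Lambda^p T^\ast X$, and since $\mathcal{A}=\lin\{\varphi_k\}_{k\geq 1}$ and the wedge product is multilinear, $\lin\{\eta_{\mathbf{k},x}:\mathbf{k}\}$ coincides fibrewise with $\lin\{d_{0,x}f_1\wedge\cdots\wedge d_{0,x}f_p:f_1,\dots,f_p\in\mathcal{A}\}$, which by the density statement recorded before the lemma is dense in $\hat\Lambda^p T_x^\ast X$ for every $x\in X$. Thus the $\eta_{\mathbf{k},x}$ are total in each fiber.

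Next I would check measurability of the inner products between these sections. By (\ref{E:extprodspace}) and (\ref{E:factorGamma}),
\[\langle\eta_{\mathbf{k},x},\eta_{\mathbf{l},x}\rangle_{\Lambda^p T_x^\ast X}=\det\big[(\widetilde{\Gamma}(\varphi_{k_i},\varphi_{l_j})(x))_{i,j=1}^p\big],\]
which is a polynomial in the measurable functions $x\mapsto\widetilde{\Gamma}(\varphi_{k_i},\varphi_{l_j})(x)$ from the construction of $T^\ast X$, hence measurable. Now \cite[Part II, Chapter 1, Section 4, Proposition 4]{Dixmier81} provides a (unique) measurable field structure on $(\hat\Lambda^p T_x^\ast X)_{x\in X}$ for which all $\eta_{\mathbf{k}}$ are measurable sections and for which a section $\omega=(\omega_x)_{x\in X}$ is measurable precisely when $x\mapsto\langle\omega_x,\eta_{\mathbf{k},x}\rangle_{\Lambda^p T_x^\ast X}$ is measurable for every $\mathbf{k}$. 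To identify the resulting space of measurable sections with $\mathcal{M}^p(T^\ast X)$, I would observe that for arbitrary $f_1,\dots,f_p\in\mathcal{A}$, expanding each $f_i$ in the $\varphi_k$ and using multilinearity exhibits $x\mapsto\langle\omega_x,d_{0,x}f_1\wedge\cdots\wedge d_{0,x}f_p\rangle_{\Lambda^p T_x^\ast X}$ as a finite linear combination of the functions $x\mapsto\langle\omega_x,\eta_{\mathbf{k},x}\rangle_{\Lambda^p T_x^\ast X}$; hence the two measurability requirements are equivalent and the two spaces of sections coincide. This proves the first assertion.

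For the inclusion $\Omega^p(\mathcal{A})\subset\mathcal{M}^p(T^\ast X)$ it suffices by linearity to treat a section $g\,d_0f_1\wedge\cdots\wedge d_0f_p$ with $f_1,\dots,f_p\in\mathcal{A}$ and $g\in\mathcal{A}\oplus\mathbb{R}$. At each $x\in X$ this is $g(x)\,(d_{0,x}f_1\wedge\cdots\wedge d_{0,x}f_p)\in\Lambda^p T_x^\ast X\subset\hat\Lambda^p T_x^\ast X$, so it is an honest section of $\hat\Lambda^p T^\ast X$. For any $h_1,\dots,h_p\in\mathcal{A}$, by (\ref{E:extprodspace}) and (\ref{E:factorGamma}),
\[\big\langle g(x)\,d_{0,x}f_1\wedge\cdots\wedge d_{0,x}f_p,\ d_{0,x}h_1\wedge\cdots\wedge d_{0,x}h_p\big\rangle_{\Lambda^p T_x^\ast X}=g(x)\,\det\big[(\widetilde{\Gamma}(f_i,h_j)(x))_{i,j=1}^p\big],\]
the product of the measurable function $g$ with a polynomial in the measurable functions $\widetilde{\Gamma}(f_i,h_j)$, hence measurable. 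By the characterization of $\mathcal{M}^p(T^\ast X)$ just established, this gives $g\,d_0f_1\wedge\cdots\wedge d_0f_p\in\mathcal{M}^p(T^\ast X)$, and the claim follows.

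No analytic difficulty arises here; the one point meriting care — which I would flag as the main (and mild) obstacle — is the bookkeeping that matches the Dixmier-type structure built from the countable family $\{\eta_{\mathbf{k}}\}$ with the a priori larger family of test sections $\{d_0f_1\wedge\cdots\wedge d_0f_p:f_i\in\mathcal{A}\}$ appearing in the definition of $\mathcal{M}^p(T^\ast X)$. This is settled purely by the multilinearity of the wedge product, exactly as in the case $p=1$ treated in the construction of $T^\ast X$.
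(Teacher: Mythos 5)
Your proof is correct and follows essentially the same route as the paper: measurability of the fiberwise inner products via (\ref{E:factorGamma}) and the determinant formula (\ref{E:extprodspace}), followed by an application of \cite[Part II, Chapter 1, Section 4, Proposition 4]{Dixmier81}, with the inclusion $\Omega^p(\mathcal{A})\subset\mathcal{M}^p(T^\ast X)$ checked directly. The paper leaves the countable-total-family bookkeeping implicit (citing the proposition "and its proof"), whereas you spell it out via the sections $\eta_{\mathbf{k}}$ and multilinearity; that is just added detail, not a different argument.
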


\begin{proof}
For any $f_1,...,f_p,g_1,...,g_p\in \mathcal{A}$ the function 
\[x\mapsto \left\langle d_{0,x}f_1\wedge...\wedge d_{0,x}f_p, d_{0,x}g_1\wedge...\wedge d_{0,x}g_p\right\rangle_{\Lambda^pT_x^\ast X}\]  
is measurable by (\ref{E:factorGamma}) and (\ref{E:extprodspace}). Using \cite[Part II, Chapter 1, Section 4, Proposition 4 and its proof]{Dixmier81} we obtain the first statement. The second statement is immediate.
\end{proof}

For an element $\omega=(\omega_x)_{x\in X}$ of $\Omega^p(\mathcal{A})$ of the form $\omega=g \:d_0f_1\wedge ...\wedge d_0f_p$ 
we have the pointwise representation
\begin{equation}\label{E:pointwiseA}
\omega_x=g(x) \:d_{0,x}f_1\wedge ...\wedge d_{0,x}f_p,\quad x\in X.
\end{equation}
We write 
\begin{equation}\label{E:OmegapAx}
\Omega^p(\mathcal{A})_x:=\{\omega_x:\ \omega\in \Omega^p(\mathcal{A})\}.
\end{equation}

Our next goal is to \enquote{replace} the vector space $\mathcal{A}$ in (\ref{E:OmegapA}) by a suitable subalgebra $\mathcal{C}\supset \mathcal{A}$ of $\mathcal{B}$. Some preparations are needed.

Given $v=v_1\wedge ...\wedge v_p\in \Lambda^p T_x^\ast X$ and $w=w_1\wedge ...\wedge w_q\in \Lambda^q T_x^\ast X$ with $v_i,w_j\in T_x^\ast X$, their wedge product 
\begin{equation}\label{E:wedgeprodplain}
v\wedge w:=v_1\wedge ...\wedge v_p\wedge w_1\wedge ...\wedge w_q
\end{equation}
is an element of $\Lambda^{p+q} T_x^\ast X$. We observe the following properties of the wedge product, a proof is given in Appendix \ref{App:wedge}.

\begin{lemma}\label{L:wedgewelldef} Let Assumption \ref{A:Dirichletform} be in force and $p,q\geq 1$.
\begin{enumerate}
\item[(i)] For any $x\in X$, $v\in \Lambda^p T^\ast_x X$ and $w\in \Lambda^q T_x^\ast X$ we have 
\begin{equation}\label{E:wedgebound}
\left\|v\wedge w\right\|_{\Lambda^{p+q}T_x^\ast X}\leq \frac{(p+q)!}{p!q!}\left\|v\right\|_{\Lambda^{p}T_x^\ast X}\left\|w\right\|_{\Lambda^{q}T_x^\ast X},
\end{equation}
and (\ref{E:wedgeprodplain}) extends to a unique bilinear map $\wedge: \hat{\Lambda}^{p}T_x^\ast X\times \hat{\Lambda}^{q}T_x^\ast X\to \hat{\Lambda}^{p+q}T_x^\ast X$ satisfying the same estimate.
\item[(ii)] Given $\omega\in \mathcal{M}^p(T^\ast X)$ and $\eta\in \mathcal{M}^q(T^\ast X)$, the section 
\begin{equation}\label{E:wedgeviaversions}
\omega\wedge \eta:=(\omega_x\wedge \eta_x)_{x\in X}
\end{equation}
is measurable,  $\omega\wedge \eta\in \mathcal{M}^{p+q}(T^\ast X)$.
\item[(iii)] Given $\omega\in L^0(\hat{\Lambda}^pT^\ast X,\mu)$ and $\eta\in L^0(\hat{\Lambda}^qT^\ast X,\mu)$, their wedge product $\omega\wedge \eta$, defined using arbitrary versions in (\ref{E:wedgeviaversions}), is a well-defined element of  $L^0(\hat{\Lambda}^{p+q}T^\ast X,\mu)$.
\item[(iv)] Given $\omega\in L^2(\hat{\Lambda}^pT^\ast X,\mu)$ and $\eta\in L^\infty(\hat{\Lambda}^qT^\ast X,\mu)$, we have $\omega\wedge \eta\in L^2(\hat{\Lambda}^{p+q}T^\ast X,\mu)$, and similarly with the roles of $\omega$ and $\eta$ swapped.
\end{enumerate}
\end{lemma}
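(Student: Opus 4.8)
The four statements in Lemma \ref{L:wedgewelldef} are of increasing analytic strength, so I would prove them in order, with (i) carrying essentially all the work. For (i), fix $x\in X$ and abbreviate $\|\cdot\|_r:=\|\cdot\|_{\Lambda^rT_x^\ast X}$. The clean way to get the estimate (\ref{E:wedgebound}) is to choose an orthonormal basis of the finite-dimensional subspace of $T_x^\ast X$ spanned by the (finitely many) vectors occurring in representations of $v$ and $w$, and to expand $v=\sum_{|I|=p}a_I\,e_I$, $w=\sum_{|J|=q}b_J\,e_J$ in the induced orthonormal bases $\{e_I\}$, $\{e_J\}$ of $\Lambda^pT_x^\ast X$, $\Lambda^qT_x^\ast X$ (so $\|v\|_p^2=\sum_I a_I^2$, $\|w\|_q^2=\sum_J b_J^2$ by (\ref{E:extprodspace})). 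Then $v\wedge w=\sum_{I,J}a_Ib_J\,e_I\wedge e_J$, and $e_I\wedge e_J$ is $\pm e_{I\cup J}$ if $I\cap J=\emptyset$ and $0$ otherwise. Grouping by the multi-index $K=I\cup J$ of size $p+q$, the coefficient of $e_K$ in $v\wedge w$ is $\sum_{I\subset K,\,|I|=p}\pm a_I b_{K\setminus I}$, a sum of at most $\binom{p+q}{p}$ terms; by Cauchy--Schwarz its square is at most $\binom{p+q}{p}\sum_{I\subset K}a_I^2b_{K\setminus I}^2$, and summing over $K$ and using $\sum_{I,J,\ I\cap J=\emptyset}a_I^2 b_J^2\le\big(\sum_I a_I^2\big)\big(\sum_J b_J^2\big)$ gives $\|v\wedge w\|_{p+q}^2\le\binom{p+q}{p}\|v\|_p^2\|w\|_q^2$, which is exactly (\ref{E:wedgebound}). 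The extension to $\hat\Lambda^pT_x^\ast X\times\hat\Lambda^qT_x^\ast X$ is then the standard BLT/density argument: $\wedge$ is bilinear and bounded on the dense subspaces $\Lambda^pT_x^\ast X\subset\hat\Lambda^pT_x^\ast X$, hence extends uniquely and continuously, with the same bound preserved in the limit.

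For (ii), I would argue measurability of $\omega\wedge\eta$ via the defining property of $\mathcal{M}^{p+q}(T^\ast X)$: I must check that $x\mapsto\langle\omega_x\wedge\eta_x,\,d_{0,x}f_1\wedge\cdots\wedge d_{0,x}f_{p+q}\rangle$ is measurable for all $f_1,\dots,f_{p+q}\in\mathcal{A}$. One reduces to a totality statement by first handling the case $\omega=d_0u_1\wedge\cdots\wedge d_0u_p$, $\eta=d_0v_1\wedge\cdots\wedge d_0v_q$ with $u_i,v_j\in\mathcal{A}$ (for which $\omega_x\wedge\eta_x=d_{0,x}u_1\wedge\cdots\wedge d_{0,x}v_q$ and measurability is (\ref{E:factorGamma}), (\ref{E:extprodspace})), and then using a measurable orthonormal frame for $\hat\Lambda^pT^\ast X$ and $\hat\Lambda^qT^\ast X$ (which exist by Lemma \ref{L:Lambdameasfield} and \cite[Part II, Chapter 1, Proposition 1]{Dixmier81}) to write $\omega_x$, $\eta_x$ as pointwise-convergent series of such decomposable measurable sections with measurable scalar coefficients; the continuity bound (\ref{E:wedgebound}) from (i) ensures that the wedge of the series is the series of the wedges pointwise, and a countable-sum-of-measurable-functions argument finishes it. Statement (iii) is then immediate: changing $\omega$, $\eta$ on $\mu$-null sets changes $\omega_x\wedge\eta_x$ only on a $\mu$-null set (again by bilinearity, or by the bound (\ref{E:wedgebound}) applied to differences), so $\omega\wedge\eta$ is well-defined on $\mu$-equivalence classes, and it lies in $L^0(\hat\Lambda^{p+q}T^\ast X,\mu)$ by (ii). For (iv), the pointwise estimate (\ref{E:wedgebound}) gives $\|\omega_x\wedge\eta_x\|_{p+q}\le\binom{p+q}{p}\|\omega_x\|_p\,\|\eta_x\|_q\le\binom{p+q}{p}\|\eta\|_{L^\infty}\,\|\omega_x\|_p$ for $\mu$-a.e.\ $x$, so $\int_X\|\omega_x\wedge\eta_x\|_{p+q}^2\,\mu(dx)\le\binom{p+q}{p}^2\|\eta\|_{L^\infty}^2\,\|\omega\|_{L^2}^2<\infty$; the symmetric statement is identical with the roles swapped.

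\textbf{Main obstacle.} The combinatorial estimate in (i) is routine once the right bookkeeping is set up, and (iii)--(iv) are then direct consequences of the pointwise bound. The one step that requires genuine care is the measurability claim in (ii): the fibers $\hat\Lambda^pT_x^\ast X$ may be infinite-dimensional, so I cannot simply expand in a finite basis as in (i) but must instead combine a measurable orthonormal frame with the continuity of $\wedge$ to justify passing the countable expansion through the wedge product pointwise in $x$. This is where invoking \cite[Part II, Chapter 1, Section 4, Proposition 4]{Dixmier81} (as in the proof of Lemma \ref{L:Lambdameasfield}) together with the bound from part (i) does the real work; everything else is bookkeeping.
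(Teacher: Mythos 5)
Your parts (i), (iii) and (iv) follow the paper's proof essentially verbatim: orthonormal expansion of $v$ and $w$, the shuffle count $\binom{p+q}{p}$ combined with Cauchy--Schwarz, the bounded-bilinear (density) extension to the completions, and the pointwise bound for the integrability statements. (A cosmetic point: your computation actually gives $\|v\wedge w\|^2\leq\binom{p+q}{p}\|v\|^2\|w\|^2$, which is \emph{stronger} than (\ref{E:wedgebound}), not ``exactly'' it; no harm done.)

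The step that needs repair is precisely the one you flag as the main obstacle, part (ii). You propose to expand $\omega$ and $\eta$ in measurable orthonormal frames of $\hat\Lambda^pT^\ast X$ and $\hat\Lambda^qT^\ast X$ and to treat the frame elements as ``decomposable measurable sections''. A frame supplied by \cite[Part II, Chapter 1, Proposition 1]{Dixmier81} need not consist of decomposable sections, and for general measurable sections $\zeta\in\mathcal{M}^p(T^\ast X)$, $\theta\in\mathcal{M}^q(T^\ast X)$ the measurability of $x\mapsto\left\langle \zeta_x\wedge\theta_x,\,d_{0,x}f_1\wedge\cdots\wedge d_{0,x}f_{p+q}\right\rangle_{\Lambda^{p+q}T_x^\ast X}$ is exactly the statement (ii) you are proving, so expanding in such a frame is circular as written. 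The paper avoids this by taking the measurable orthonormal frame $\{\xi_i\}$ of $T^\ast X$ itself (degree one): for $i_1<\cdots<i_p$ the section $\xi_{i_1}\wedge\cdots\wedge\xi_{i_p}$ lies in $\mathcal{M}^p(T^\ast X)$ because its pairing with $d_{0,x}f_1\wedge\cdots\wedge d_{0,x}f_p$ equals $\det[(\left\langle\xi_{i_k,x},d_{0,x}f_\ell\right\rangle_{T_x^\ast X})_{k,\ell}]$, a determinant of measurable scalar functions; these wedges form an orthonormal basis of each fiber $\hat\Lambda^pT_x^\ast X$, and the wedge of two such basis sections is again of the same type. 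Expanding $\omega$ and $\eta$ in these wedge-frames, with coefficients measurable because pairings of measurable sections are measurable, then yields the double-series argument you intend, your continuity remark from (i) justifying the interchange of wedge and series. Alternatively your version can be saved by observing that a Gram--Schmidt frame built from the fundamental sequence $d_0\varphi_{i_1}\wedge\cdots\wedge d_0\varphi_{i_p}$ (as in the proof of Lemma \ref{L:Lambdameasfield}) consists, at each $x$, of finite linear combinations of these decomposable sections with measurable coefficients; but this has to be argued and does not follow from the mere existence of some measurable orthonormal frame of $\hat\Lambda^pT^\ast X$.
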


We make the following additional assumption.

\begin{assumption}\label{A:algebraC}\mbox{}
We assume that $\mathcal{C}$ is an $\mathcal{E}^{1/2}$-dense subalgebra of $\mathcal{B}$ and $\Gamma(f)\in L^\infty(X,\mu)$ for all $f\in \mathcal{C}$.
\end{assumption}

Assumption \ref{A:algebraC} is a sufficient condition to guarantee that Lemma \ref{L:wedgewelldef} (iv) applies to all differentials $df$, $f\in \mathcal{C}$. Under Assumption \ref{A:algebraC} we may and do assume that 
\begin{equation}\label{E:spandense}
\text{$(\varphi_k)_{k\geq 1}\subset \mathcal{C}$ is such that $\mathcal{A}=\lin(\{\varphi_k\}_{k\geq 1})$ is dense in $\mathcal{B}$.}
\end{equation}
Indeed, $\mathcal{D}(\mathcal{E})$ being $\mathcal{E}^{1/2}$-separable, also its subset $\mathcal{C}$ is $\mathcal{E}^{1/2}$-separable, so that we can find $(\varphi_k)_{k\geq 1}\subset \mathcal{C}$ with span $\mathcal{A}$ being $\mathcal{E}^{1/2}$-dense in $\mathcal{C}$ and therefore also $\mathcal{E}^{1/2}$-dense in $\mathcal{D}(\mathcal{E})$.

Let Assumption \ref{A:algebraC} be satisfied. For any $p\geq 1$ we set
\begin{equation}\label{E:OmegapC}
\Omega^p(\mathcal{C}):=\lin\left\lbrace g\:d_0f_1\wedge ...\wedge d_0f_p:\ f_1,...,f_p\in \mathcal{C},\ g\in \mathcal{C}\oplus \mathbb{R}\right\rbrace.
\end{equation}
An inductive application of Lemma \ref{L:wedgewelldef} (iv) justifies definition (\ref{E:OmegapC}) and implies that
\begin{equation}\label{E:inclusion}
\Omega^p(\mathcal{C})\subset L^2(\hat{\Lambda}^pT^\ast X,\mu)\cap L^\infty(\hat{\Lambda}^pT^\ast X,\mu),\quad  p\geq 1.
\end{equation}
We refer to the elements of $\Omega^p(\mathcal{C})$ as \emph{elementary $p$-forms associated with the algebra $\mathcal{C}$}.
Clearly 
\[\Omega^p(\mathcal{A})\subset \Omega^p(\mathcal{C}).\]

\begin{lemma}\label{L:integratedOmegas}
Suppose that Assumptions \ref{A:Dirichletform} and \ref{A:algebraC} hold and that $\mathcal{A}$ is as in (\ref{E:spandense}). Then for any $p\geq 1$ the space $\Omega^p(\mathcal{A})$ is dense in $L^2(X,\hat{\Lambda}^pT^\ast X,\mu)$, and consequently the same is true for $\Omega^p(\mathcal{C})$.
\end{lemma}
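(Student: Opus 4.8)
The plan is to reduce the density of $\Omega^p(\mathcal{A})$ in $L^2(X,\hat{\Lambda}^pT^\ast X,\mu)$ to the defining density property (iii) of the first order structure from Theorem \ref{T:Eberle}, namely that $\lin\{g\,d_0f: f,g\in\mathcal{B}\}$ is dense in $L^2(X,T^\ast X,\mu)$, together with the fact (noted just before Lemma \ref{L:Lambdameasfield}) that $\lin\{d_{0,x}f_1\wedge\cdots\wedge d_{0,x}f_p: f_i\in\mathcal{A}\}$ is dense in $\hat{\Lambda}^pT_x^\ast X$ at \emph{every} $x$. The final clause of the statement is then immediate since $\Omega^p(\mathcal{A})\subset\Omega^p(\mathcal{C})$, so it suffices to treat $\Omega^p(\mathcal{A})$.

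First I would record the case $p=1$: here $\hat{\Lambda}^1T^\ast X=T^\ast X$ and $\Omega^1(\mathcal{A})=\lin\{g\,d_0f: f\in\mathcal{A},\ g\in\mathcal{A}\oplus\mathbb{R}\}$, so I must upgrade the density of $\lin\{g\,d_0f:f,g\in\mathcal{B}\}$ to the same with $f,g$ ranging over $\mathcal{A}$. This is a routine approximation argument: for $f,g\in\mathcal{B}$ pick sequences $f_n,g_n\in\mathcal{A}$ with $\mathcal{E}(f-f_n)\to 0$ and $g_n\to g$ in $L^\infty$ (using (\ref{E:spandense}) and the Markov-property construction of $\mathcal{A}$ as $\mathcal{E}^{1/2}$-dense in $\mathcal{B}$; one may also need to truncate to keep $g_n$ uniformly bounded, which is harmless since truncations of elements of $\mathcal{A}$ can be re-approximated), and estimate $\|g\,d_0f-g_n\,d_0f_n\|_{L^2(X,T^\ast X,\mu)}\le \|g\|_\infty\|d_0(f-f_n)\|_{L^2}+\|(g-g_n)d_0f_n\|_{L^2}$, using (\ref{E:differentialsandenergy}) and $\|d_0f_n\|_{L^2}^2=\mathcal{E}(f_n)\to\mathcal{E}(f)$. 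Hence $\Omega^1(\mathcal{A})$ is dense in $L^2(X,T^\ast X,\mu)$.

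For $p\ge 2$ I would argue by a totality argument. Let $\omega\in L^2(X,\hat{\Lambda}^pT^\ast X,\mu)$ be orthogonal to $\Omega^p(\mathcal{A})$; I aim to show $\omega=0$. Using a measurable orthonormal frame for $T^\ast X$ and the pointwise density statement above, one knows that the measurable sections $x\mapsto d_{0,x}f_1\wedge\cdots\wedge d_{0,x}f_p$ with $f_i\in\mathcal{A}$ span a pointwise-dense subspace of each fiber $\hat{\Lambda}^pT_x^\ast X$; testing $\omega$ against $g\cdot(d_0f_1\wedge\cdots\wedge d_0f_p)$ for all $g\in\mathcal{A}\oplus\mathbb{R}$ (in particular all bounded measurable $g$ after an approximation in $L^\infty$, or even just localizing with $g$ running over a countable family separating points of a generating $\sigma$-algebra) forces $\langle\omega_x, d_{0,x}f_1\wedge\cdots\wedge d_{0,x}f_p\rangle_{\Lambda^pT_x^\ast X}=0$ for $\mu$-a.e.\ $x$, for every fixed tuple $(f_1,\dots,f_p)$ from the countable set $\{\varphi_k\}$. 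Taking a common null set over the countably many such tuples and using pointwise density of these wedges in $\hat{\Lambda}^pT_x^\ast X$ gives $\omega_x=0$ for $\mu$-a.e.\ $x$, i.e.\ $\omega=0$. Then $\Omega^p(\mathcal{A})$ is dense, and since $\Omega^p(\mathcal{A})\subset\Omega^p(\mathcal{C})\subset L^2(X,\hat{\Lambda}^pT^\ast X,\mu)$ by (\ref{E:inclusion}), so is $\Omega^p(\mathcal{C})$.

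The main obstacle is bookkeeping the measurability/countability in the $p\ge 2$ step: one must be careful that the "test against $g\,(d_0f_1\wedge\cdots\wedge d_0f_p)$" argument really produces a \emph{single} $\mu$-null set outside which $\omega_x$ is orthogonal to a \emph{dense} subset of $\hat{\Lambda}^pT_x^\ast X$, which is why one works with the countable generating family $\{\varphi_k\}$ for $\mathcal{A}$ and a countable family of functions $g$ and invokes the pointwise density statement established via the measurable orthonormal frame of \cite[Part II, Chapter 1]{Dixmier81}; the analytic content (the approximation estimates) is otherwise routine given (\ref{E:differentialsandenergy}), Lemma \ref{L:wedgewelldef}, and Assumption \ref{A:algebraC}.
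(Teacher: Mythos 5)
Your totality argument for $p\geq 2$ is essentially the paper's own proof, which in fact runs uniformly for all $p\geq 1$: test $\omega$ against $g\,d_0f_1\wedge\dots\wedge d_0f_p$, use density of $\mathcal{A}$ in $L^2(X,\mu)$ to conclude $\left\langle \omega_x,d_{0,x}f_1\wedge\dots\wedge d_{0,x}f_p\right\rangle_{\Lambda^pT_x^\ast X}=0$ $\mu$-a.e.\ for each fixed tuple, pass to a common null set over a countable generating family (the paper uses $\lin_{\mathbb{Q}}(\{\varphi_k\})$), and invoke the fiberwise density of wedges of differentials of $\mathcal{A}$-functions. Your separate $p=1$ step is therefore unnecessary, and its reliance on approximating a general $g\in\mathcal{B}$ by elements of $\mathcal{A}$ in $L^\infty$ is not actually available (only $\mathcal{E}^{1/2}$- and $L^2$-type density are at hand), but this is harmless since your $p\geq 2$ argument, like the paper's, covers $p=1$ verbatim.
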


\begin{proof}
Suppose that $\omega\in L^2(X,\hat{\Lambda}^pT^\ast X,\mu)$ is such that 
\[\int_Xg(x)\left\langle \omega_x,d_{0,x}f_1\wedge ...\wedge d_{0,x}f_p\right\rangle_{\Lambda^pT_x^\ast X}\mu(dx)=\left\langle \omega,gd_0f_1\wedge ...\wedge d_0f_p\right\rangle_{L^2(X,\hat{\Lambda}^pT^\ast X,\mu)}=0\]
for all $f_1,...,f_p\in \mathcal{A}$ and $g\in \mathcal{A}\oplus \mathbb{R}$. This is possible only if for any fixed $f_1,...,f_p\in \mathcal{A}$ the signed measure
$\left\langle \omega_x,d_{0,x}f_1\wedge ...\wedge d_{0,x}f_p\right\rangle_{\Lambda^pT_x^\ast X}\mu(dx)$ is the zero measure, note that since $\mathcal{A}$ is dense in $L^2(X,\mu)$, indicators of sets of finite measure can be approximated pointwise $\mu$-a.e. by functions from $\mathcal{A}$. This can happen only if 
\begin{equation}\label{E:nullatx}
\left\langle \omega_x,d_{0,x}f_1\wedge ...\wedge d_{0,x}f_p\right\rangle_{\Lambda^pT_x^\ast X}=0
\end{equation}
at $\mu$-a.e. $x\in X$. We can find a set $Z\in \mathcal{X}$ of measure zero such that (\ref{E:nullatx}) holds for all $f_1,...,f_p\in \lin_\mathbb{Q}(\{\varphi_k\}_{k\geq 1})$ and all $x\in X\setminus Z$. But this implies that (\ref{E:nullatx}) holds for all $f_1,...,f_p\in \mathcal{A}$ and all $x\in X\setminus Z$, therefore $\omega_x=0$ at all such $x$ and consequently $\omega=0$ in $L^2(X,\hat{\Lambda}^pT^\ast X,\mu)$.
\end{proof}

We introduce exterior derivatives of higher order by a mimicry of the classical definition. Let Assumption \ref{A:algebraC} be satisfied. If $\mathcal{C}$ contains the constants, then by locality we have 
\begin{equation}\label{E:anniconst}
d_0\mathbf{1}=0;
\end{equation}
in this case we write $\Omega^0(\mathcal{C}):=\mathcal{C}$. If $\mathcal{C}$ does not include the constants, we write $\Omega^0(\mathcal{C}):=\mathcal{C}\oplus \mathbb{R}$ and extend $d_0$ to a linear operator $d_0:\Omega^0(\mathcal{C})\to \Omega^1(\mathcal{C})$ by setting
\[d_0(f+c):=d_0f,\quad f\in \mathcal{C},\ c\in \mathbb{R};\] 
this implies (\ref{E:anniconst}). For $p\geq 1$, $f_1,...,f_p\in\mathcal{C}$ and $g\in \mathcal{C}\oplus \mathbb{R}$ we define
\[d_p(g\:d_0f_1\wedge ...\wedge d_0f_p):=d_0g\wedge d_0f_1\wedge ... \wedge d_0f_p\]
and extend $d_p$ to a linear map $d_p:\Omega^p(\mathcal{C})\to \Omega^{p+1}(\mathcal{C})$. It satisfies 
$d_p\circ d_{p-1}=0$. This gives the following observation.

\begin{theorem}\label{T:localcomplex} Let Assumptions \ref{A:Dirichletform} and \ref{A:algebraC} be in force. Then the sequence
\begin{equation}\label{E:localcomplex}
0\longrightarrow \Omega^0(\mathcal{C})\stackrel{d_0}{\longrightarrow}\Omega^1(\mathcal{C})\stackrel{d_1}{\longrightarrow} ... \stackrel{d_{p-1}}{\longrightarrow} \Omega^p(\mathcal{C})  \stackrel{d_p}{\longrightarrow} ...
\end{equation}
is a cochain complex. 
\end{theorem}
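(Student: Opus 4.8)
The plan is to verify that \eqref{E:localcomplex} is indeed a cochain complex, which amounts to two things: first, that each $d_p$ is a well-defined linear map on $\Omega^p(\mathcal{C})$ (not merely on generators), and second, that $d_p \circ d_{p-1} = 0$ for all $p \geq 0$. The second point is essentially asserted in the text just above the statement, so the genuine content is the well-definedness, since $d_p$ has been \emph{defined} on the spanning set $\{g\,d_0f_1\wedge\cdots\wedge d_0f_p\}$ and then extended linearly, and such extensions are only legitimate once we know the prescribed values are compatible with all linear relations among the generators.

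First I would address well-definedness of $d_0 : \Omega^0(\mathcal{C}) \to \Omega^1(\mathcal{C})$. In the case $\mathbf{1}\in\mathcal{C}$ this is just the restriction of the operator $d_0$ from Theorem~\ref{T:Eberle}, whose linearity and (by locality, via \eqref{E:anniconst}) annihilation of constants are already in hand; in the case $\mathbf{1}\notin\mathcal{C}$ the formula $d_0(f+c):=d_0f$ is well-defined because the decomposition $f+c$ with $f\in\mathcal{C}$, $c\in\mathbb{R}$ is unique (as $\mathbf 1 \notin \mathcal C$), and $\mathbb R$-linearity is immediate. Next, for $p\geq 1$, the key observation is that $d_p$ is nothing but the composition of the wedge-product map $\eta \mapsto d_0g \wedge \eta$-type operations with the already-understood $d_0$; more precisely, on a generator $\omega = g\,d_0f_1\wedge\cdots\wedge d_0f_p$ one has $d_p\omega = d_0g \wedge d_0f_1\wedge\cdots\wedge d_0f_p$, and the right-hand side depends only on the element $\omega \in \Omega^p(\mathcal{C}) \subset L^2(X,\hat\Lambda^p T^\ast X,\mu)$, not on the particular way it is written as a linear combination of generators. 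To see this, I would argue that the exterior derivative can be characterized intrinsically: writing $\omega \in \Omega^p(\mathcal{C})$ as $\omega = \sum_k g_k\, d_0 f_1^{(k)} \wedge \cdots \wedge d_0 f_p^{(k)}$, the candidate value $\sum_k d_0 g_k \wedge d_0 f_1^{(k)} \wedge \cdots \wedge d_0 f_p^{(k)}$ is pinned down $\mu$-a.e.\ by the pointwise product and Leibniz structure; concretely, if two such finite sums represent the same element of $L^2(X,\hat\Lambda^p T^\ast X,\mu)$, then their difference, call it $\sum_j h_j\, d_0 u_1^{(j)}\wedge\cdots\wedge d_0 u_p^{(j)} = 0$ in $L^2$, and one must show $\sum_j d_0 h_j \wedge d_0 u_1^{(j)}\wedge\cdots\wedge d_0 u_p^{(j)} = 0$ as well. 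This follows by the same $\mu$-a.e.\ fiberwise reasoning as in Lemma~\ref{L:integratedOmegas}: it suffices to test against $d_{0,x}w_1\wedge\cdots\wedge d_{0,x}w_{p+1}$ for $w_i \in \mathcal{A}$ and use that, by the chain rule \eqref{E:chainrule} and \eqref{E:factorGamma}, all the relevant inner products are expressible polynomially in the entries $\widetilde\Gamma(\varphi_i,\varphi_j)(x)$, so that a vanishing relation among the undifferentiated generators propagates, via algebraic identities valid pointwise on the common countable generating set, to a vanishing relation among the differentiated ones.

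Having established that each $d_p$ is a well-defined linear map $\Omega^p(\mathcal{C}) \to \Omega^{p+1}(\mathcal{C})$ (the target membership being guaranteed by \eqref{E:inclusion} applied to $p+1$ together with $d_0 g \in \Omega^1(\mathcal{C})$ for $g \in \mathcal{C}\oplus\mathbb{R}$), the complex property $d_p \circ d_{p-1} = 0$ is then checked on generators, which by linearity suffices. For $p = 0$: if $\mathbf 1 \in \mathcal C$ then $d_0 \mathbf 1 = 0$ gives $d_1(d_0 g) = d_0(\mathbf 1) \wedge d_0 g = 0$ after first writing $d_0 g = \mathbf 1 \cdot d_0 g$; more carefully $d_1 d_0 f = d_0 f \wedge$\,---\,no, rather $d_1(d_0 f) = d_1(\mathbf{1}\, d_0 f) = d_0 \mathbf{1} \wedge d_0 f = 0$. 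For $p \geq 1$ and a generator $g\, d_0 f_1 \wedge \cdots \wedge d_0 f_p$, we compute $d_{p+1}(d_p(g\,d_0f_1\wedge\cdots\wedge d_0f_p)) = d_{p+1}(\mathbf{1}\cdot d_0g\wedge d_0f_1\wedge\cdots\wedge d_0f_p) = d_0\mathbf{1}\wedge d_0g\wedge d_0f_1\wedge\cdots\wedge d_0f_p = 0$ by \eqref{E:anniconst}.

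The main obstacle I anticipate is precisely the well-definedness of $d_p$ for $p \geq 1$, i.e.\ showing that the prescription on generators respects all linear relations in $\Omega^p(\mathcal{C})$. One must be careful that, unlike in the smooth manifold setting where $d$ is characterized by Leibniz plus $d^2 = 0$ plus its action on functions, here the domain $\Omega^p(\mathcal{C})$ sits inside a direct integral of abstract exterior powers, and a relation such as $d_0(fg) = f\,d_0 g + g\, d_0 f$ means a given element admits many generator representations. The clean way to handle this is to fix once and for all the countable set $\{\varphi_k\}$ and work with the measurable versions $\widetilde\Gamma(\varphi_i,\varphi_j)$, noting that the Leibniz rule \eqref{E:productrule} and chain rule \eqref{E:chainrule} let one reduce every element of $\Omega^p(\mathcal{C})$ and its putative derivative to expressions in these fixed versions holding simultaneously outside a single $\mu$-null set; then a linear relation in $L^2$ forces the corresponding fiberwise relation a.e., and applying $d_0$ (which is a genuine operator) commutes past the algebraic manipulations. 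Once this bookkeeping is in place, everything else is the routine verification sketched above.
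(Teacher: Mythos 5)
The part of your proposal that overlaps with the paper is fine and is in fact all the paper itself does: the theorem is recorded as an observation, the operators $d_p$ being prescribed on the spanning set of (\ref{E:OmegapC}) and extended linearly, and the complex property being the generator computation you carry out, $d_{p+1}\bigl(d_0g\wedge d_0f_1\wedge\cdots\wedge d_0f_p\bigr)=d_0\mathbf{1}\wedge d_0g\wedge d_0f_1\wedge\cdots\wedge d_0f_p=0$ by (\ref{E:anniconst}), together with the trivial treatment of $\Omega^0(\mathcal{C})$. The paper does not raise the representation-independence question at all.

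The genuine gap is in the step you single out as the main content. Your argument that a relation $\sum_j h_j\,d_0u_1^{(j)}\wedge\cdots\wedge d_0u_p^{(j)}=0$ in $L^2(X,\hat{\Lambda}^pT^\ast X,\mu)$ forces $\sum_j d_0h_j\wedge d_0u_1^{(j)}\wedge\cdots\wedge d_0u_p^{(j)}=0$ is a non sequitur: at $\mu$-a.e.\ fixed $x$ the hypothesis constrains only the numbers $h_j(x)$ and the covectors $d_{0,x}u_i^{(j)}$, whereas the conclusion involves the covectors $d_{0,x}h_j$, which are independent data not determined by the hypothesis at $x$; testing against $d_{0,x}w_1\wedge\cdots\wedge d_{0,x}w_{p+1}$ and writing inner products as polynomials in $\widetilde{\Gamma}(\varphi_i,\varphi_j)(x)$ cannot produce them. (The classical proof differentiates the identity on open sets using an a priori, coordinate-wise defined exterior derivative, which is exactly what is unavailable here.) Worse, the implication is false under Assumptions \ref{A:Dirichletform} and \ref{A:algebraC} alone: take $X=\mathbb{R}^3$ with Lebesgue measure, $\mathcal{E}(f)=\int_{\mathbb{R}^3}|P\nabla f|^2\,dx$ with $P(x)$ the orthogonal projection onto the contact distribution $\ker(dz-y\,dx)$, and $\mathcal{C}=C_c^\infty(\mathbb{R}^3)$; both assumptions hold, the fibers $T^\ast_xX$ are two-dimensional, and for $\lambda\in C_c^\infty(\mathbb{R}^3)$ and a cutoff $\chi\equiv 1$ near $\supp\lambda$ one has $\lambda\,d_0(\chi z)+(-\lambda y)\,d_0(\chi x)=0$ in $L^2(X,T^\ast X,\mu)$ (the covector $e_z-y\,e_x$ is orthogonal to the distribution), while $d_0\lambda\wedge d_0(\chi z)+d_0(-\lambda y)\wedge d_0(\chi x)$ is fiberwise the $\Lambda^2$-component of $\lambda\,dx\wedge dy$ along the distribution, which is nonzero a.e.\ on $\{\lambda\neq 0\}$. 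This is the familiar Rumin phenomenon and is the reason for the caveat in Remark \ref{R:subR}. So the well-definedness lemma you want cannot be proved in this generality, and your bookkeeping with a fixed sequence $\{\varphi_k\}$ and common null sets does not change this; if you wish to address representation-independence, it has to come from additional structural input in concrete situations, not from the sketched fiberwise argument.
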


Similarly as before, we write  $(\Omega^\ast(\mathcal{C}),d_\ast)$ for the complex in (\ref{E:localcomplex}).

\begin{remark}
The complex $(\Omega^\ast(\mathcal{C}),d_\ast)$ is \enquote{local} in the sense that the operators $d_p$ are local.
\end{remark}

With the agreement that $d_{-1}:=0$, we write 
\begin{equation}\label{E:localcoho}
H^p\Omega^\ast(\mathcal{C}):=\ker d_p|_{\Omega^p(\mathcal{C})} / \im d_{p-1}|_{\Omega^{p-1}(\mathcal{C})}.
\end{equation}
for the $p$-th cohomology of (\ref{E:localcomplex}), $p\geq 0$.

\begin{examples}\label{Ex:Rmf} 
Suppose that $X=M$ is a smooth $n$-dimensional Riemannian manifold and $\mu$ is a measure on $M$ having a strictly positive smooth density with respect to the Riemannian volume. Then $(M,\mu)$ is referred to as a \emph{weighted manifold}, \cite[Definition 3.17]{Grigoryan2009}. Consider the Dirichlet integral
\begin{equation}\label{E:Dirichletintegral}
\mathcal{E}(f)=\int_M\left\| \nabla f\right\|_{TM}^2\mu(dx),\quad f\in \mathcal{D}(\mathcal{E}),
\end{equation}
where $\mathcal{D}(\mathcal{E}):=W_0^1(M)$ is the closure of $C_c^\infty(M)$ in the space $W^1(M)$ of all $f\in L^2(M)$ with $\left\|\nabla f\right\|_{TM}$ in $L^2(M,\mu)$ and with norm $f\mapsto (\left\|f\right\|_{L^2(M,\mu)}^2+\left\|\nabla f\right\|_{L^2(M,\mu)}^2)^{1/2}$. Then Assumption \ref{A:Dirichletform} holds, note that $\Gamma(f)=\|\nabla f\|_{TM}^2$. With $\mathcal{C}=C_c^\infty(M)$ also Assumption \ref{A:algebraC} is satisfied. At $\mu$-a.e. $x\in M$ the spaces $T_x^\ast M$ in Theorem \ref{T:Eberle}, seen as vector spaces, coincide with the classical cotangent spaces. In particular, we have $\Lambda^p T_x^\ast M=\{0\}$, $p>n$, at $\mu$-a.e. $x\in M$, in line with Remark \ref{R:trivial}. The spaces $\Omega^p(C_c^\infty(M))$ are the spaces $\Omega_c^p(M)$ of compactly supported smooth $p$-forms, $d_\ast$ is the exterior derivative and (\ref{E:localcomplex}) and (\ref{E:localcoho}) coincide with the deRham complex and the deRham cohomologies with compact supports (\ref{E:deRhamcoho}).
\end{examples}

\begin{examples}\label{Ex:CD} Suppose that $(X,\varrho)$ is a complete separable metric space endowed with a Borel probability measure $\mu$ having finite second moment. Let $(\mathcal{E},\mathcal{D}(\mathcal{E}))$ be the Cheeger energy as defined in \cite[formula (1.1)]{AGS14}, and assume it is quadratic. Then it is a local Dirichlet form, and the algebra $\mathcal{C}:=\lip_b(X)$ of bounded Lipschitz functions is dense in $\mathcal{D}(\mathcal{E})$, \cite[Proposition 4.10]{AGS14}. Both Assumptions \ref{A:Dirichletform} and \ref{A:algebraC} are satisfied. In this situation $\Gamma(f)=|\nabla f|^2_\ast$, where $|\nabla f|_\ast$ denotes the minimal relaxed gradient of $f\in \mathcal{D}(\mathcal{E})$; it is well known that under the stated hypotheses it can be replaced by Cheeger's gradient, \cite{Ch99}, or the minimal upper gradient of $f$, \cite{Sh00}. See also \cite{KoskelaZhou}.

Under the additional assumption that $(X,\varrho,\mu)$ satisfies the $\mathrm{RCD}(K,\infty)$-condition with some $K\in \mathbb{R}$
a theory of differential complexes was studied in \cite[Section 3.5]{Gigli17}. There the closability of exterior derivations, \cite[Theorem 3.5.2 ii)]{Gigli17}, was ensured by the lower Ricci curvature bound, \cite[Remark 3.5.3]{Gigli17}, and a version of the Hodge theorem was proved, \cite[Theorem 3.5.15]{Gigli17}.
\end{examples}

\begin{remark} In general the exterior derivations $d_p$ of order $p\geq 1$ may not necessarily be closable, see for instance \cite{HinzTeplyaev18} for a counterexample.
\end{remark}

\subsection{Comments on regularity and change of measure}\label{SS:stronglylocal}

Recall that if $(X,\varrho)$ is a locally compact separable metric space and $\mu$ is a nonnegative Radon measure on $X$ with full support, then a Dirichlet form $(\mathcal{E},\mathcal{D}(\mathcal{E}))$ on $L^2(X,\mu)$ is said to be \emph{regular}, \cite{FOT94}, if $C_c(X)\cap\mathcal{D}(\mathcal{E})$ is dense both in $\mathcal{D}(\mathcal{E})$ and in $C_c(X)$. A regular Dirichlet form $(\mathcal{E},\mathcal{D}(\mathcal{E}))$ is said to be \emph{strongly local} if 
$\mathcal{E}(f,g)=0$
for all $f,g\in C_c(X)\cap\mathcal{D}(\mathcal{E})$ such that $g$ is constant on a neighborhood of $\supp f$. Strong locality implies locality in the sense of Subsection \ref{SS:Dirichletforms}. 

One way to satisfy Assumptions  \ref{A:Dirichletform} and \ref{A:algebraC} is to use the interplay of suitable \enquote{coordinates} and specific measures. This idea is well known, and it is robust enough to cover even very non-classical situations, as the following example shows.

\begin{examples}\label{Ex:ProdSG}
Let $K\subset \mathbb{R}^2$ denote the Sierpinski gasket and $(\mathcal{E},\mathcal{F})$ the standard resistance form on $K$, \cite{Ki89, Ki01, Str06}. We use the associated resistance metric on $K$. Let $h_1, h_2\in \mathcal{F}$ be harmonic on $K$ without the vertex points $p_0,p_1, p_2$ of its convex hull, $h_1$ having boundary values one at vertex $p_1$ and zero at $p_0$ and $p_2$, and $h_2$ having boundary values one at vertex $p_2$ and zero at the other two. Let $\nu_{h_i}$ denote the energy measure of $h_i$, \cite{Kusuoka89, FOT94}. The finite measure $\nu:=\nu_{h_1}+\nu_{h_2}$ is called \emph{Kusuoka's measure}. The bilinear form $(\mathcal{E},\mathcal{F})$ is a strongly local regular Dirichlet form on $L^2(K,\nu)$ admitting a carr\'e du champ $\Gamma$, and the map $h=(h_1,h_2)$ from $K$ onto $h(K)\subset \mathbb{R}^2$ is a homeomorphism, \cite{Ki90, T08}. It is well known that in this situation $\nu$-a.a. spaces $T_x^\ast K$ have dimension one, \cite{Kusuoka89}. The space $\mathcal{Z}$ of cylindrical functions of the form $f=F\circ h$ with $F\in C_b^1(\mathbb{R}^2)$ is an algebra, dense in $\mathcal{F}$, \cite{Ki90, T08}, and, by Stone-Weierstrass, uniformly dense in $C(K)$. Since $\Gamma(h_i)\leq 1$, $i=1,2$, the chain rule (\ref{E:chainrule}) gives $\Gamma(f)\in L^\infty(K,\nu)$ for all elements $f$ of $\mathcal{Z}$. The preceding statements on the density of  $\mathcal{Z}$ remain true if $C_b^1(\mathbb{R}^2)$ is replaced by $C_b^k(\mathbb{R}^k)$, $k=2,...,+\infty$.

We now reset notation and, similarly as in \cite{Strichartz05}, take two identical copies $K_j$, $(\mathcal{E}^{(j)},\mathcal{F}_j)$, $\nu_j$,  $\Gamma^{(j)}$, $\mathcal{Z}_j$, $j=1,2$, of these objects and consider products: We endow $K_1\times K_2$ with the product metric and the product measure $\nu_1\otimes \nu_2$. We define
\begin{equation}\label{E:productform}
\mathcal{E}(f):=\int_{K_1}\mathcal{E}^{(2)}(f(x_1,\cdot))\nu_1(dx_1)+\int_{K_2}\mathcal{E}^{(1)}(f(\cdot,x_2))\nu_2(dx_2)
\end{equation} 
for all $f$ from the space $\mathcal{D}(\mathcal{E})$ of all $f\in L^2(K_1\times K_2,\nu_1\otimes \nu_2)$ such that for $\nu_1$-a.e. $x_1\in K_1$  we have $f(x_1,\cdot)\in \mathcal{F}_2$ and for $\nu_2$-a.e. $x_2\in K_2$  we have $f(\cdot,x_2)\in \mathcal{F}_1$, and for which right-hand side in (\ref{E:productform}) is finite, cf. \cite[Chapter V, Definition 2.1.1]{BH91}. The product Dirichlet form $(\mathcal{E},\mathcal{D}(\mathcal{E}))$ has a carr\'e du champ $\Gamma$, it satisfies 
\begin{equation}\label{E:productcarre}
\Gamma(f_1\otimes f_2)(x_1,x_2)=f_1(x_1)^2\Gamma^{(2)}(f_2)(x_2)+f_2(x_2)^2\Gamma^{(1)}(f_1)(x_1)
\end{equation}
for all $f_i\in \mathcal{F}_i$, $i=1,2$. The space $\mathcal{C}:=\mathcal{Z}_1\otimes \mathcal{Z}_2$ of all finite linear combinations $\sum_k f_{1,k}\otimes f_{2,k}$ with $f_{j,k}\in \mathcal{Z}_j$, interpreted in the usual sense that $\big(\sum_k f_{1,k}\otimes f_{2,k}\big)(x_1,x_2)=\sum_k f_{1,k}(x_1)f_{2,k}(x_2)$, $(x_1,x_2)\in K_1\times K_2$, is an algebra under pointwise multiplication. Again Stone-Weierstrass can be used to see that $\mathcal{C}$ is uniformly dense in $C(K_1\times K_2)$, its density in $\mathcal{D}(\mathcal{E})$ is straightforward from (\ref{E:productform}). So the form $(\mathcal{E},\mathcal{D}(\mathcal{E}))$ is regular. Its strong locality is easily seen, too. For $f_j\in \mathcal{Z}_j$, $j=1,2$, we have $\Gamma(f_1\otimes f_2)\in L^\infty(K_1\times K_2,\nu_1\otimes \nu_2)$, and bilinear extension preserves this property. Consequently Assumptions \ref{A:Dirichletform} and \ref{A:algebraC} are satisfied for $K_1\times K_2$, $\nu_1\otimes \nu_2$, $(\mathcal{E},\mathcal{D}(\mathcal{E}))$ and $\mathcal{C}$. It is quickly seen that at $\nu_1\otimes \nu_2$-a.e. $(x_1,x_2)\in K_1\times K_2$ the space $T_{(x_1,x_2)}(K_1\times K_2)$ is two-dimensional, consequently $\Omega_2(\mathcal{C})$ is nontrivial.
\end{examples}

The idea mentioned above may be used in more general situations. We briefly recall that if $(\mathcal{E},\mathcal{D}(\mathcal{E}))$ is an arbitrary strongly local regular Dirichlet form, then one can choose a suitable core and perform a change of measure to obtain a local Dirichlet form $(\mathcal{E}',\mathcal{D}(\mathcal{E}'))$ and an algebra $\mathcal{C}$ for which Assumptions \ref{A:Dirichletform} and \ref{A:algebraC} are satisfied. This means that, \enquote{up to a change of measure}, the preceding construction of local complexes \enquote{works for any strongly local Dirichlet form}.

\begin{theorem}\label{T:changemeasure}
Suppose that $(X,\varrho)$ is a locally compact separable metric space, $\mu$ a nonnegative Radon measure on $X$ with full support and $(\mathcal{E},\mathcal{D}(\mathcal{E}))$ is a strongly local regular Dirichlet form on $L^2(X,\mu)$. Then there are 
an algebra $\mathcal{C}\subset C_c(X)\cap\mathcal{D}(\mathcal{E})$, dense in $\mathcal{D}(\mathcal{E})$ and in $C_c(X)$, and a finite Borel measure $\mu'$ on $X$ such that $(\mathcal{E},\mathcal{C})$ is closable on $L^2(X,\mu')$ and its closure is a local Dirichlet form $(\mathcal{E}',\mathcal{D}(\mathcal{E}'))$ on $L^2(X,\mu')$ satisfying Assumptions \ref{A:Dirichletform} and \ref{A:algebraC} and $\mathcal{E}'|_\mathcal{C}=\mathcal{E}|_\mathcal{C}$.
\end{theorem}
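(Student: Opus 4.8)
The plan is to produce $\mathcal{C}$ and $\mu'$ by two explicit recipes and then to check closability of $(\mathcal{E},\mathcal{C})$ on $L^2(X,\mu')$; everything else is bookkeeping around Theorem~\ref{T:Eberle}, and closability is the one genuinely delicate point. First I would fix an increasing exhaustion $X=\bigcup_kU_k$ by relatively compact open sets with $\overline{U_k}\subset U_{k+1}$ and, by regularity together with the Markov property, cutoffs $\eta_k\in C_c(X)\cap\mathcal{D}(\mathcal{E})$ with $0\le\eta_k\le1$, $\eta_k\equiv1$ on $\overline{U_k}$ and $\supp\eta_k\subset U_{k+1}$. Since $X$ is locally compact and second countable and $(\mathcal{E},\mathcal{D}(\mathcal{E}))$ is regular on the separable space $L^2(X,\mu)$, both $C_0(X)$ and $\mathcal{D}(\mathcal{E})$ with the norm $(\mathcal{E}(\cdot)+\|\cdot\|_{L^2(X,\mu)}^2)^{1/2}$ are separable, so that $C_c(X)\cap\mathcal{D}(\mathcal{E})$ with the sum of the uniform and the $\mathcal{D}(\mathcal{E})$-norm embeds isometrically into a separable space and is itself separable; I would choose a countable family $\{\psi_j\}_{j\ge1}\subset C_c(X)\cap\mathcal{D}(\mathcal{E})$ containing all $\eta_k$ and dense for that combined norm, and set
\[\mathcal{C}:=\big\{F(\psi_{j_1},\dots,\psi_{j_m}):\ m\ge1,\ F\in C^1(\mathbb{R}^m)\ \text{bounded with bounded partial derivatives},\ F(0)=0\big\}.\]
By the chain rule (\ref{E:chainrule}), $\mathcal{C}$ is an algebra inside $C_c(X)\cap\mathcal{D}(\mathcal{E})$, it contains each $\psi_j$, and it is closed under composition with bounded $C^1$ maps fixing $0$ — in particular it contains plenty of cutoffs. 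Containing $\{\psi_j\}$, it is dense in $\mathcal{D}(\mathcal{E})$ and uniformly dense in $C_c(X)$.

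Next, writing $\nu_{\langle f\rangle}$ for the (finite Radon) energy measure of $f\in\mathcal{D}(\mathcal{E})$, \cite{FOT94}, I would pick a strictly positive bounded measurable $g$ with $\int_X g\,d\mu<\infty$ and $g^{-1}\in L^\infty(\overline{U_k},\mu)$ for every $k$ — for instance $g=\sum_k2^{-k}(1+\mu(\overline{U_k}))^{-1}\mathbb{1}_{\overline{U_k}}$ — and put
\[\mu':=g\,\mu+\sum_{j\ge1}2^{-j}\big(1+\mathcal{E}(\psi_j)\big)^{-1}\nu_{\langle\psi_j\rangle},\]
a finite Radon measure with full support; then $L^2(X,\mu')$ is separable and $\mathcal{C}$ is dense in it. The point of the second sum is that $\mu'$ is \emph{energy dominant}: from the Leibniz rule for energy measures, the inequality $|\nu_{\langle u,v\rangle}|\le\nu_{\langle u\rangle}^{1/2}\nu_{\langle v\rangle}^{1/2}$ and $\nu_{\langle\psi_j\rangle}\le2^j(1+\mathcal{E}(\psi_j))\mu'$ one reads off $\nu_{\langle f\rangle}\ll\mu'$ with bounded density for $f\in\mathcal{C}$, and then, via $|\nu_{\langle u\rangle}(A)^{1/2}-\nu_{\langle v\rangle}(A)^{1/2}|\le(2\mathcal{E}(u-v))^{1/2}$ and density of $\mathcal{C}$, $\nu_{\langle f\rangle}\ll\mu'$ for all $f\in\mathcal{D}(\mathcal{E})$. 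Consequently $\Gamma'(f):=\tfrac12\,d\nu_{\langle f\rangle}/d\mu'$ is a well-defined nonnegative symmetric bilinear map into $L^1(X,\mu')$, it obeys the pointwise Cauchy--Schwarz inequality and the chain rule, $\mathcal{E}(f)=\int_X\Gamma'(f)\,d\mu'$ for $f\in\mathcal{D}(\mathcal{E})$, and $\Gamma'(f)\in L^\infty(X,\mu')$ for $f\in\mathcal{C}$.

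The hard part will be closability, since $L^2(\mu')$-convergence does not by itself control $L^2(\mu)$-convergence and the moving supports of elements of $\mathcal{C}$ must be cut down before the closedness of the given form $(\mathcal{E},\mathcal{D}(\mathcal{E}))$ on $L^2(X,\mu)$ can be invoked. Suppose $f_n\in\mathcal{C}$ with $f_n\to0$ in $L^2(X,\mu')$ and $\mathcal{E}(f_n-f_m)\to0$. From $\int_X\Gamma'(f_n-f_m)\,d\mu'\to0$ and $|\Gamma'(u)^{1/2}-\Gamma'(v)^{1/2}|\le\Gamma'(u-v)^{1/2}$ one gets that $\Gamma'(f_n)^{1/2}$ is Cauchy in $L^2(X,\mu')$, say $\Gamma'(f_n)^{1/2}\to\rho$, hence $\Gamma'(f_n)\to\rho^2$ in $L^1(X,\mu')$ and $\mathcal{E}(f_n)\to\int_X\rho^2\,d\mu'$; it remains to see that $\rho=0$. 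For fixed $k$, the function $\eta_kf_n$ lies in $\mathcal{C}$, is supported in $U_{k+1}$, and tends to $0$ in $L^2(X,g\mu)$ because $\mu'\ge g\mu$, hence — since $g^{-1}\in L^\infty(\overline{U_{k+1}},\mu)$ — also in $L^2(X,\mu)$; moreover $\mathcal{E}(\eta_kf_n-\eta_kf_m)\le 2\mathcal{E}(f_n-f_m)+2\|\Gamma'(\eta_k)\|_\infty\|f_n-f_m\|_{L^2(\mu')}^2\to0$. As $\eta_kf_n\in\mathcal{D}(\mathcal{E})$ and $(\mathcal{E},\mathcal{D}(\mathcal{E}))$ is closed on $L^2(X,\mu)$, this forces $\mathcal{E}(\eta_kf_n)\to0$; and since $f_n=\eta_kf_n$ on $U_k$ and energy densities are local, $\int_{U_k}\Gamma'(f_n)\,d\mu'\le\mathcal{E}(\eta_kf_n)\to0$, so $\int_{U_k}\rho^2\,d\mu'=0$ for every $k$ and, letting $k\to\infty$, $\rho=0$. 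Thus $(\mathcal{E},\mathcal{C})$ is closable on $L^2(X,\mu')$.

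Finally I would verify that the closure $(\mathcal{E}',\mathcal{D}(\mathcal{E}'))$ has the required properties. By construction $\mathcal{E}'|_\mathcal{C}=\mathcal{E}|_\mathcal{C}$; it is a regular Dirichlet form on $L^2(X,\mu')$ because $\mathcal{C}\subset C_c(X)$ is a core, $\mu'$ has full support, and the unit contraction operates on $\mathcal{C}$ with $\mathcal{E}((0\vee f)\wedge1)\le\mathcal{E}(f)$, cf.\ \cite[Theorem 3.1.1]{FOT94}. It is local: for $f\in\mathcal{C}$ and $F,G\in C_c^\infty(\mathbb{R})$ with disjoint supports, $F(f)-F(0)$ and $G(f)-G(0)$ lie in $\mathcal{C}\subset\mathcal{D}(\mathcal{E})$, so $\mathcal{E}'(F(f)-F(0),G(f)-G(0))=\mathcal{E}(F(f)-F(0),G(f)-G(0))=0$, and this passes to all $f\in\mathcal{D}(\mathcal{E}')$ by continuity of $f\mapsto F(f)$ and density of $\mathcal{C}$. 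Its energy measures agree with $\nu_{\langle\cdot\rangle}$ on $\mathcal{C}$ and hence, by the approximation used above, are absolutely continuous with respect to $\mu'$, so $(\mathcal{E}',\mathcal{D}(\mathcal{E}'))$ admits the carr\'e du champ $f\mapsto\tfrac12\,d\nu'_{\langle f\rangle}/d\mu'$, which together with separability of $L^2(X,\mu')$ gives Assumption~\ref{A:Dirichletform}. And $\mathcal{C}$ is a subalgebra of $\mathcal{B}'=b\mathcal{X}\cap\mathcal{D}(\mathcal{E}')$, $\mathcal{E}'^{1/2}$-dense there because it is dense in $\mathcal{D}(\mathcal{E}')$ for its Hilbert norm, with $\Gamma'(f)\in L^\infty(X,\mu')$ for $f\in\mathcal{C}$, so Assumption~\ref{A:algebraC} holds as well, completing the argument.
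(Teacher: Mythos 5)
Your proposal is correct, but the decisive step -- closability of $(\mathcal{E},\mathcal{C})$ on $L^2(X,\mu')$ -- is handled by a genuinely different mechanism than in the paper. The paper takes $\mu'$ to be a weighted sum of energy measures of a countable dense family alone (a Kusuoka-type measure, cf. \cite[Lemma 2.1]{HKT15}) and obtains closability from Mokobodzki's sup-norm closability theorem \cite{Mokobodzki95} together with \cite[Theorems 2.1 and 2.2]{H16}, which turn sup-norm closability, energy dominance and the existence of a strictly positive element of $\mathcal{C}$ into $L^2(X,\mu')$-closability. You instead build the component $g\,d\mu$, with $g$ strictly positive and bounded away from zero on each $\overline{U_k}$, into $\mu'$; this lets you upgrade convergence to $0$ in $L^2(X,\mu')$ to $L^2(X,\mu)$-convergence of the cut-down functions $\eta_kf_n$, invoke the closedness of the original form on $L^2(X,\mu)$, and conclude via strong locality of energy measures (so that $\nu_{\langle f_n\rangle}$ and $\nu_{\langle\eta_kf_n\rangle}$ agree on $U_k$) that the limiting density $\rho$ vanishes on every $U_k$. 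Your route is more elementary and self-contained, avoiding the sup-norm closability machinery, at the price that $\mu'$ must locally dominate a positive multiple of $\mu$; the paper's route produces a reference measure made purely of energy measures, which is the natural choice in the fractal examples and allows $\mu'$ to be unrelated to (even singular with respect to) $\mu$. Both choices satisfy the theorem as stated, and the remaining verifications (energy dominance, carr\'e du champ, Markov property, locality of the closure) run in parallel in the two arguments. Two small points you should patch: $F(t)=t$ is not bounded, so to see $\psi_j\in\mathcal{C}$ compose with a bounded $C^1$ function agreeing with the identity on the bounded range of $\psi_j$; and since the unit contraction need not map $\mathcal{C}$ into $\mathcal{C}$, argue Markovianity of $(\mathcal{E},\mathcal{C})$ via smooth $\varepsilon$-truncations $\phi_\varepsilon$ with $\phi_\varepsilon(0)=0$ and Lipschitz constant at most one, which do map $\mathcal{C}$ into $\mathcal{C}$ and satisfy $\mathcal{E}(\phi_\varepsilon(f))\leq\mathcal{E}(f)$, so that \cite[Theorem 3.1.1]{FOT94} applies as you intend.
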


Theorem \ref{T:changemeasure} is not new at all, see for instance \cite{FukushimaLeJan91, FOT94, FukushimaSatoTaniguchi91, KuwaeNakao91} for variants and related statements. For  the convenience of the reader we sketch a proof in Appendix \ref{App:change}.

\section{Non-local-to-local approximations}\label{S:approx}

Under suitable circumstances the quantities $x\mapsto \left\langle \omega_x,\eta_x\right\rangle_{\Lambda^p T_x^\ast X}$ with $\omega,\eta\in \Omega^p(\mathcal{C})$ appear as limits of analogous quantities in terms of elementary $p$-functions. 

\subsection{Assumptions}

The basic assumption in this section is the following, it implies Assumption \ref{A:Dirichletform}.

\begin{assumption}\label{A:stronglylocal}
We assume that $(X,\varrho)$ is a locally compact separable metric space, $\mu$ a nonnegative Radon measure on $X$ with full support and $(\mathcal{E},\mathcal{D}(\mathcal{E}))$ a strongly local regular Dirichlet form on $L^2(X,\mu)$ with carr\'e du champ $\Gamma$.
\end{assumption}

Given $p\geq 1$, we call $j(x,dy)$ a \emph{generalized kernel from $X$ to $\mathcal{B}(X)^{\otimes p}$} if for any Borel subset $A\in \mathcal{B}(X)^{\otimes p}$ of $X^p$ the map $x\mapsto j(x,A)$ defines an element of $L^0(X,\mu)$ with values in $[0,+\infty]$ and for any Borel subset $B\in\mathcal{B}(X)$ the map $A\mapsto \int_Bj(x,A)\mu(dy)$ is a Borel measure on $X^p$. 

We impose a second assumption regarding the choice of suitable generalized kernels and algebras.

\begin{assumption}\label{A:kernels} There are a directed set $\Theta$ and generalized kernels $j_\theta(x,dy)$, $\theta\in \Theta$, from $X$ to $\mathcal{B}(X)$, and a subalgebra  $\mathcal{C}$ of $\mathcal{D}(\mathcal{E})\cap C_b(X)$ such that 
\begin{enumerate}
\item[(i)] For each $\theta\in \Theta$ the kernel $j_\theta(x,dy)$ is $\mu$-symmetric in the sense that 
\begin{equation}\label{E:symmetry}
j_\theta(x,dy)\mu(dx)=j_\theta(y,dx)\mu(dy),
\end{equation}
\item[(ii)] the algebra $\mathcal{C}$ is dense in dense in $\mathcal{D}(\mathcal{E})$ and dense in $L^1(X,\mu)$,
\item[(iii)] for any $f\in \mathcal{C}$ the functions 
\begin{equation}\label{E:Gammatheta}
x\mapsto \Gamma_\theta(f)(x):=\int_X(f(x)-f(y))^2j_\theta(x,dy),\quad \theta\in \Theta,
\end{equation}
are in $L^1(X,\mu)$ and satisfy
\begin{equation}\label{E:Gammathetalim}
\lim_{\theta\in \Theta}\int_X\Gamma_\theta(f)d\mu=\mathcal{E}(f),
\end{equation}
\item[(iv)] for any sufficiently small $\varepsilon>0$  we have 
\begin{equation}\label{E:killocal}
\lim_{\theta\in \Theta}\int_X\int_{B(x,\varepsilon)^c}(f(x)-f(y))^2j_{\theta}(x,dy)\mu(dx)=0, \quad f\in \mathcal{C}.
\end{equation}
\end{enumerate}
\end{assumption}

The limit relations (\ref{E:Gammathetalim}) and (\ref{E:killocal}) are interpreted in the sense of convergent nets, \cite[Chapter 2]{Kelley75}. This formulation plays no particular role and is used only to accommodate the different notations for corresponding limits in specific realizations below. Likewise, it is only for the sake of easier referencing in specific realizations that we formulate (\ref{E:Gammatheta}), (\ref{E:Gammathetalim}) and (\ref{E:killocal}) primarily for $\mathcal{C}$. 

If (\ref{E:Gammathetalim}) holds for sufficiently many functions, then (\ref{E:killocal}) is immediate from strong locality. 

\begin{lemma}\label{L:local}
Let Assumption \ref{A:stronglylocal} and Assumption \ref{A:kernels} (i), (ii), (iii) be satisfied. Assume in addition that for all  $f\in \mathcal{D}(\mathcal{E})\cap C_c(X)$ the function $\Gamma_\theta(f)$, defined as in (\ref{E:Gammatheta}), is in $L^1(X,\mu)$ and satisfies (\ref{E:Gammathetalim}). Then also Assumption \ref{A:kernels} (iv) holds.
\end{lemma}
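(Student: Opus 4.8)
The plan is the following. Write $\Gamma_\theta^\varepsilon(f)(x):=\int_{B(x,\varepsilon)}(f(x)-f(y))^2\,j_\theta(x,dy)$ and $R_\theta^\varepsilon(f)(x):=\int_{B(x,\varepsilon)^c}(f(x)-f(y))^2\,j_\theta(x,dy)$, so that $\Gamma_\theta(f)=\Gamma_\theta^\varepsilon(f)+R_\theta^\varepsilon(f)$ and $R_\theta^\varepsilon(f)$ is exactly the integrand in (\ref{E:killocal}). Since $\Gamma_\theta^\varepsilon(f)\le\Gamma_\theta(f)\in L^1(X,\mu)$ and $\int_X\Gamma_\theta(f)\,d\mu\to\mathcal{E}(f)$ by Assumption \ref{A:kernels}(iii), and since $R_\theta^\varepsilon(f)\ge0$, the assertion (\ref{E:killocal}), i.e. $\int_X R_\theta^\varepsilon(f)\,d\mu\to 0$, is equivalent to the \enquote{no loss of energy under range truncation} statement
\[\liminf_{\theta\in\Theta}\int_X\Gamma_\theta^\varepsilon(f)\,d\mu\ \ge\ \mathcal{E}(f).\]
Because $\mathcal{C}$ is a vector space, polarization turns (\ref{E:Gammathetalim}) into convergence of the associated symmetric bilinear forms on $\mathcal{C}$, and the added hypothesis gives the same on $\mathcal{D}(\mathcal{E})\cap C_c(X)$; I use both freely.

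Next I would reduce to \emph{small supports}. By local compactness and second countability of $X$, regularity of $(\mathcal{E},\mathcal{D}(\mathcal{E}))$ and the standard existence of $\mathcal{D}(\mathcal{E})\cap C_c(X)$-valued cut-offs, \cite{FOT94}, choose a locally finite partition of unity $\{\phi_k\}\subset\mathcal{D}(\mathcal{E})\cap C_c(X)$ with $0\le\phi_k\le1$, $\sum_k\phi_k\equiv1$, $\diam\supp\phi_k<\varepsilon/2$ and bounded overlap. For $f\in\mathcal{C}\subset\mathcal{B}$ each $f\phi_k$ again lies in $\mathcal{B}\cap C_c(X)\subset\mathcal{D}(\mathcal{E})\cap C_c(X)$ (the algebra property of $\mathcal{B}$) and has $\diam\supp(f\phi_k)<\varepsilon/2$, so the added hypothesis applies to each $f\phi_k$. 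From $f(x)-f(y)=\sum_k\big((f\phi_k)(x)-(f\phi_k)(y)\big)$ together with the observation that for $\varrho(x,y)\ge\varepsilon$ no single $\supp\phi_k$ contains both $x$ and $y$, one obtains a pointwise estimate $R_\theta^\varepsilon(f)\le C_\varepsilon\sum_k R_\theta^\varepsilon(f\phi_k)$, while $((f\phi_k)(x)-(f\phi_k)(y))^2\le2\phi_k(x)^2(f(x)-f(y))^2+2\|f\|_\infty^2(\phi_k(x)-\phi_k(y))^2$, summed over $k$ and integrated, controls $\sum_k\int_X\Gamma_\theta(f\phi_k)\,d\mu$ in terms of $\int_X\Gamma_\theta(f)\,d\mu$ and the scale-$\theta$ energy of the fixed partition. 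After the corresponding Fatou/dominated-convergence bookkeeping for interchanging $\sum_k$ with $\lim_\theta$ (and, when $\mu(X)=\infty$, a supplementary localization-at-infinity step along a compact exhaustion), this reduces (\ref{E:killocal}) to the case of $u\in\mathcal{D}(\mathcal{E})\cap C_c(X)$ with $\diam\supp u<\varepsilon/2$. For such $u$ the $\mu$-symmetry (\ref{E:symmetry}) together with the one-sidedness of $u(x)-u(y)$ on $\{\varrho(x,y)\ge\varepsilon\}$ further simplify the claim to $\int_{\supp u}u(x)^2\,j_\theta(x,B(x,\varepsilon)^c)\,\mu(dx)\to 0$.

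For the small-support case I would introduce a companion cut-off $v\in\mathcal{D}(\mathcal{E})\cap C_c(X)$ with $0\le v\le1$, $v\equiv1$ on $\supp u$ and $\supp v$ contained in the open $\varepsilon/4$-neighbourhood of $\supp u$ (again available by the regular-Dirichlet-form cut-off construction). A short diameter estimate shows that $v(x)=1$ and $v(y)=0$ whenever $x\in\supp u$ and $\varrho(x,y)\ge\varepsilon$, hence $R_\theta^\varepsilon(v)(x)=j_\theta(x,B(x,\varepsilon)^c)$ there, and therefore
\[\int_{\supp u}u(x)^2\,j_\theta(x,B(x,\varepsilon)^c)\,\mu(dx)\ \le\ \|u\|_\infty^2\int_X R_\theta^\varepsilon(v)\,d\mu\ =\ \|u\|_\infty^2\Big(\int_X\Gamma_\theta(v)\,d\mu-\int_X\Gamma_\theta^\varepsilon(v)\,d\mu\Big).\]
Since $\int_X\Gamma_\theta(v)\,d\mu\to\mathcal{E}(v)$, everything comes down once more to the no-loss-of-energy statement $\liminf_\theta\int_X\Gamma_\theta^\varepsilon(v)\,d\mu\ge\mathcal{E}(v)$, now for the small-support function $v$. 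This is the place where strong locality of $(\mathcal{E},\mathcal{D}(\mathcal{E}))$ must be used decisively: iterating the partition construction at scales $\ll\varepsilon$, the mutual energies of pieces at distance $\ge\varepsilon$ contribute to neither the truncated nor the full side, and strong locality forces the remaining small-scale (diagonal) contributions to rebuild $\mathcal{E}(v)$ in the limit — heuristically, a genuinely long-range part surviving in the limit would appear as a non-local term in the limiting form, contradicting strong locality.

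I expect the main obstacle to be exactly this last point, $\liminf_\theta\int_X\Gamma_\theta^\varepsilon(f)\,d\mu\ge\mathcal{E}(f)$. A comparison against a single cut-off is too lossy, because a function interpolating from $1$ to $0$ across a gap of size $\sim\varepsilon$ carries a fixed amount of $\mathcal{E}$-energy that does not tend to $0$; the argument therefore has to exploit (\ref{E:Gammathetalim}) simultaneously on a rich supply of functions (which is why the added hypothesis is formulated for all of $\mathcal{D}(\mathcal{E})\cap C_c(X)$, i.e. \enquote{for sufficiently many functions}) in tandem with strong locality of the limit form. The remaining ingredients — the partition-of-unity and cut-off constructions, the $\mu$-symmetrization, and the non-compact case of $X$ — are routine but demand careful bookkeeping.
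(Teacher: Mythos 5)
Your preparatory reductions (the splitting $\Gamma_\theta=\Gamma_\theta^{(\varepsilon)}+R_\theta^\varepsilon$, the small-support localization, the companion cut-off $v$ with $R^\varepsilon_\theta(v)(x)=j_\theta(x,B(x,\varepsilon)^c)$ on $\supp u$) are sound, but the proposal does not close, and the gap is exactly where you say it is. Your chain of reductions ends with the statement $\liminf_\theta\int_X\Gamma^{(\varepsilon)}_\theta(v)\,d\mu\geq \mathcal{E}(v)$ for a compactly supported $v$ of small support --- which is the same \enquote{no loss of energy under range truncation} statement you started from, so the argument is circular, and the only thing offered to break the circle is the heuristic that \enquote{a surviving long-range part would appear as a non-local term in the limiting form, contradicting strong locality}. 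That is not a proof: hypothesis (\ref{E:Gammathetalim}) only gives convergence of the numbers $\int_X\Gamma_\theta(f)\,d\mu$, and there is no limiting bilinear form with a Beurling--Deny-type decomposition to which strong locality could be applied. (The interchange of $\lim_\theta$ with the infinite sum over your partition, your \enquote{localization-at-infinity step}, is also left unexplained, but that is secondary.)

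The idea you are missing --- and the way the paper argues --- is that one should not prove any global lower bound on the truncated energies at all, but instead bound the long-range mass \emph{tested against a weight}. For nonnegative $\varphi\in\mathcal{D}(\mathcal{E})\cap C_c(X)$ (with small support; general $\varphi$ by a finite partition of unity) choose $\chi\in\mathcal{D}(\mathcal{E})\cap C_c(X)$, $0\leq\chi\leq1$, $\chi\equiv1$ on a neighborhood of $\supp\varphi$ and $\supp\chi\subset(\supp\varphi)_\varepsilon$, so that $\chi(x)-\chi(y)=1$ whenever $\varphi(x)>0$ and $y\in B(x,\varepsilon)^c$. Then $\int_X\varphi(x)\,j_\theta(x,B(x,\varepsilon)^c)\,\mu(dx)\leq\int_X\varphi\,\Gamma_\theta(\chi)\,d\mu$; the right-hand side converges to $\int_X\varphi\,\Gamma(\chi)\,d\mu$ because $\int_X\varphi\,\Gamma_\theta(\chi)\,d\mu=\mathcal{E}_\theta(\chi\varphi,\chi)-\tfrac12\mathcal{E}_\theta(\chi^2,\varphi)$ (the computation in \cite[(3.5) Lemma]{CKS87}, using (\ref{E:symmetry})) and the polarized form of the added hypothesis applies to $\chi\varphi,\chi,\chi^2,\varphi\in\mathcal{D}(\mathcal{E})\cap C_c(X)$ --- this is precisely why the hypothesis is required for all of $\mathcal{D}(\mathcal{E})\cap C_c(X)$ and not just $\mathcal{C}$. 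Finally $\int_X\varphi\,\Gamma(\chi)\,d\mu=0$ by strong locality in its concrete form: the energy measure $\Gamma(\chi)\,d\mu$ does not charge the open set where $\chi$ is constant, which contains $\supp\varphi$ (\cite[Corollary 3.2.1]{FOT94}, \cite{LeJan78}). Your objection that a single cut-off is \enquote{too lossy} because it carries a fixed amount of energy across the $\varepsilon$-gap evaporates here: the transition-layer energy of $\chi$ is weighted by $\varphi$, which vanishes on that layer. Once $\lim_\theta\int_X\varphi(x)\,j_\theta(x,B(x,\varepsilon)^c)\,\mu(dx)=0$ is known, the bound $(f(x)-f(y))^2\leq4\|f\|_{\sup}^2$ for $f\in\mathcal{C}$ and the Riesz-representation/extension step conclude (\ref{E:killocal}), as in the paper. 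So the decisive use of strong locality is the vanishing of energy measures on sets of local constancy, not the soft limiting-form argument you gestured at; without that mechanism your proof remains a reduction of the claim to itself.
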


Given a set $A\subset X$, we write $A_\varepsilon:=\{x\in X: \dist(x,A)<\varepsilon\}$ for its $\varepsilon$-parallel set.

\begin{proof} 
We first claim that for any nonnegative $\varphi\in \mathcal{D}(\mathcal{E})\cap C_c(X)$ we have 
\begin{equation}\label{E:claimzero}
\lim_{\theta\in \Theta}\int_X\varphi(x)j_\theta(x,B(x,\varepsilon)^c)\mu(dx)=0.
\end{equation}

If the complement of $(\supp\varphi)_\varepsilon$ is empty, then this is clear. Assume that it is not empty and choose $\chi\in \mathcal{D}(\mathcal{E})\cap C_c(X)$ such that $0\leq \chi\leq 1$, $\chi\equiv 1$ on a small open neighborhood of $\supp\varphi$ and $\supp \chi\subset (\supp\varphi)_\varepsilon$. It follows that
\begin{align}
\limsup_{\theta\in \Theta}\int_X\varphi(x)j_\theta(x,B(x,\varepsilon)^c)\mu(dx)&= \limsup_{\theta\in \Theta}\int_X\varphi(x)\int_{B(x,\varepsilon)^c}(\chi(x)-\chi(y))^2j_\theta(x,dy)\mu(dx)\notag\\
&\leq \lim_{\theta\in \Theta}\int_X\varphi(x)\int_X(\chi(x)-\chi(y))^2j_\theta(x,dy)\mu(dx)\notag\\
&=\lim_{\theta\in \Theta}\int_X \varphi\Gamma_\theta(\chi)\:d\mu\notag\\
&=\int_X\varphi \Gamma(\chi)\:d\mu.\notag
\end{align}
The last equality can be seen from (\ref{E:carredef}) and (\ref{E:Gammathetalim}), see for instance the proof of \cite[(3.5) Lemma]{CKS87} for the small calculation. By the strong locality of $\mathcal{E}$ the last line is zero, see \cite[1.5.2. Propri\'et\'es]{LeJan78} or \cite[Corollary 3.2.1]{FOT94}. This shows (\ref{E:claimzero}).

Since for any $f\in\mathcal{C}$ and any $\varphi \in \mathcal{D}(\mathcal{E})\cap C_c(X)$ we have 
\[\big|\int_X\varphi(x) \int_{B(x,\varepsilon)^c}(f(x)-f(y))^2j_\theta(x,dy)\mu(dx)\big|\leq 4\|f\|_{\sup}^2\int_X |\varphi(x)|j_\theta(x,B(x,\varepsilon)^c)\mu(dx),\]
the linear functional 
\[\varphi\mapsto \lim_{\theta\in \Theta}\int_X\varphi(x) \int_{B(x,\varepsilon)^c}(f(x)-f(y))^2j_\theta(x,dy)\mu(dx)\]
is well-defined on $\mathcal{D}(\mathcal{E})\cap C_c(X)$ and seen to be the zero functional. Consequently also its unique extension to all of $C_c(X)$ is zero, and the result follows from the Riesz representation theorem for measures.
\end{proof}

Given $\varepsilon>0$ and $f$ such that $\Gamma_\theta(f)$ is defined, let 
\[\Gamma_\theta^{(\varepsilon)}(f)(x):=\int_{B(x,\varepsilon)}(f(x)-f(y))^2j_\theta(x,dy),\quad \theta\in \Theta.\]

\begin{remark}\label{R:L1} 
Let Assumptions \ref{A:stronglylocal} and \ref{A:kernels} be satisfied and let $\varepsilon>0$.

For any $f\in \mathcal{D}(\mathcal{E})\cap C_b(X)$ such that 
\begin{equation}\label{E:GammathetalimL1}
\lim_{\theta\in \Theta}\Gamma_{\theta}(f)=\Gamma(f)\quad \text{in $L^1(X,\mu)$}
\end{equation}
we have $\lim_{\theta\in \Theta}\Gamma_{\theta}^{(\varepsilon)}(f)=\Gamma(f)$ in $L^1(X,\mu)$. This is clear from (\ref{E:killocal}).
\end{remark}

A third, pragmatic assumption ensures the \enquote{boundedness of gradients}. Together with Assumptions \ref{A:stronglylocal} and \ref{A:kernels} it implies Assumption \ref{A:algebraC}.

\begin{assumption}\label{A:C} 
For any $f\in \mathcal{C}$ we have $\Gamma(f)\in L^\infty(X,\mu)$ and 
\begin{equation}\label{E:Gammathetainfty}
\sup_{\theta\in \Theta}\left\|\Gamma_\theta(f)\right\|_{L^\infty(X,\mu)}<+\infty.
\end{equation}
\end{assumption}

\begin{corollary}\label{C:Gammaepsconv}
Let Assumptions \ref{A:stronglylocal}, \ref{A:kernels} and \ref{A:C} be satisfied. If $\varepsilon>0$ is sufficiently small, then for any $f\in \mathcal{C}$ we have $\lim_{\theta\in \Theta}\Gamma_{\theta}^{(\varepsilon)}(f)=\Gamma(f)$ weakly$^\star$ in $L^\infty(X,\mu)$, that is,
\begin{equation}\label{E:weaklystar}
\lim_{\theta\in \Theta}\int_X g\Gamma_{\theta}^{(\varepsilon)}(f)\:d\mu=\int_X g\Gamma(f)\:d\mu, 
\end{equation}
for any $g\in L^1(X,\mu)$.
\end{corollary}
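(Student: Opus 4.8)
The plan is to combine three ingredients already in place: the weak-$\star$ relative compactness provided by the uniform bound in Assumption \ref{A:C}, the $L^1$-convergence statement of Remark \ref{R:L1}, and a density argument using Assumption \ref{A:kernels}(ii). First I would fix $\varepsilon > 0$ small enough that \eqref{E:killocal} holds, and fix $f \in \mathcal{C}$. By Assumption \ref{A:C} the net $(\Gamma_\theta(f))_{\theta\in\Theta}$ is bounded in $L^\infty(X,\mu)$, hence so is $(\Gamma_\theta^{(\varepsilon)}(f))_{\theta\in\Theta}$ since $0\le \Gamma_\theta^{(\varepsilon)}(f)\le \Gamma_\theta(f)$ pointwise; call $M$ the common bound. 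So it suffices to identify the weak-$\star$ limit: by the uniform bound, \eqref{E:weaklystar} will follow once we show $\int_X g\,\Gamma_\theta^{(\varepsilon)}(f)\,d\mu \to \int_X g\,\Gamma(f)\,d\mu$ for $g$ ranging over a dense subset of $L^1(X,\mu)$ — and a natural such subset is $\mathcal{C}$ itself, dense in $L^1(X,\mu)$ by Assumption \ref{A:kernels}(ii).

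The key computational step is then to show, for $g\in\mathcal{C}$ (or more generally $g\in\mathcal{C}\oplus\mathbb{R}$, which is still $L^1$-dense), that
\[
\lim_{\theta\in\Theta}\int_X g\,\Gamma_\theta(f)\,d\mu=\int_X g\,\Gamma(f)\,d\mu.
\]
This is the determinantal-free, scalar prototype of the convergence: one expands $\int_X g\,\Gamma_\theta(f)\,d\mu = \int_X\int_X g(x)(f(x)-f(y))^2 j_\theta(x,dy)\,\mu(dx)$, uses the $\mu$-symmetry \eqref{E:symmetry} of $j_\theta$ to symmetrize the integrand, and thereby rewrites it in terms of the bilinear forms $\Gamma_\theta$ evaluated at products $fg$, $f^2$, etc., exactly as in the proof of \cite[(3.5) Lemma]{CKS87} referenced in the proof of Lemma \ref{L:local}. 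Passing to the limit via the polarization of \eqref{E:Gammathetalim} — which holds for all of $\mathcal{C}$, and $\mathcal{C}$ being an algebra closed under the needed products $fg, f^2, f^2g$ — together with the carré-du-champ identity \eqref{E:carredef}, produces $\int_X g\,\Gamma(f)\,d\mu$. Combining this with Remark \ref{R:L1}, which tells us $\Gamma_\theta(f)-\Gamma_\theta^{(\varepsilon)}(f)\to 0$ in $L^1(X,\mu)$ (so its pairing against the bounded function $g$ vanishes), gives the claim for $g\in\mathcal{C}\oplus\mathbb{R}$.

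Finally I would upgrade from the dense class to all of $L^1(X,\mu)$ by a standard $3\varepsilon$ estimate: given $g\in L^1(X,\mu)$ and $\eta>0$, pick $g'\in\mathcal{C}\oplus\mathbb{R}$ with $\|g-g'\|_{L^1}<\eta$; then
\[
\Big|\int_X g\,\Gamma_\theta^{(\varepsilon)}(f)\,d\mu-\int_X g\,\Gamma(f)\,d\mu\Big|
\le 2M\|g-g'\|_{L^1}+\Big|\int_X g'\,\Gamma_\theta^{(\varepsilon)}(f)\,d\mu-\int_X g'\,\Gamma(f)\,d\mu\Big|,
\]
using $\|\Gamma_\theta^{(\varepsilon)}(f)\|_{L^\infty},\|\Gamma(f)\|_{L^\infty}\le M$; the first term is $\le 2M\eta$ and the second tends to $0$. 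Since $\eta$ is arbitrary, \eqref{E:weaklystar} follows. The main obstacle is the middle step: verifying that the $\mu$-symmetrization plus polarization genuinely reduces $\int g\,\Gamma_\theta(f)\,d\mu$ to a finite combination of terms to which \eqref{E:Gammathetalim} applies. One must be a little careful that every argument appearing after symmetrization ($fg$, $f^2$, $f^2g$, $g$) indeed lies in $\mathcal{D}(\mathcal{E})$ and in the class for which \eqref{E:Gammathetalim} is assumed — here we do use that $\mathcal{C}$ is an algebra and that $\mathcal{B}=b\mathcal{X}\cap\mathcal{D}(\mathcal{E})$ is an algebra, so all these products are bounded elements of $\mathcal{D}(\mathcal{E})$, and the $L^1$-convergence $\Gamma_\theta\to\Gamma$ on $\mathcal{C}$ extends by polarization and the $L^\infty$-bound to the needed bilinear expressions.
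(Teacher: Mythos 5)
Your proof is correct and follows essentially the same route as the paper's: the symmetrization/polarization calculation in the style of \cite[(3.5) Lemma]{CKS87} together with (\ref{E:carredef}), (\ref{E:Gammathetalim}) and (\ref{E:killocal}) gives (\ref{E:weaklystar}) for $g\in\mathcal{C}$, and the uniform bounds $\|\Gamma_\theta^{(\varepsilon)}(f)\|_{L^\infty(X,\mu)}\leq\sup_{\theta}\|\Gamma_\theta(f)\|_{L^\infty(X,\mu)}$ and $\|\Gamma(f)\|_{L^\infty(X,\mu)}$ from Assumption \ref{A:C} then extend it to all $g\in L^1(X,\mu)$ by density, exactly as in the paper. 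The only slight imprecision is the appeal to Remark \ref{R:L1} for $\Gamma_\theta(f)-\Gamma_\theta^{(\varepsilon)}(f)\to 0$ in $L^1(X,\mu)$: that remark presupposes (\ref{E:GammathetalimL1}), which is not assumed here, but the fact you actually use is just (\ref{E:killocal}), i.e.\ Assumption \ref{A:kernels} (iv), so nothing is lost.
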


\begin{proof}
Similarly as before (\ref{E:carredef}),  (\ref{E:Gammathetalim}), a small calculation and (\ref{E:killocal}) show that for any $f,g\in \mathcal{C}$ we have 
(\ref{E:weaklystar}). Since 
\[\big|\int_X g\Gamma(f)\:d\mu\big|\leq \|g\|_{L^1(X,\mu)}\|\Gamma(f)\|_{L^\infty(X,\mu)}\quad\text{and}\quad \big|\int_X g\Gamma_\theta^{(\varepsilon)}(f)\:d\mu\big|\leq \|g\|_{L^1(X,\mu)}\sup_{\theta\in \Theta}\|\Gamma_\theta(f)\|_{L^\infty(X,\mu)}\] 
this extends to arbitrary  $g\in L^1(X,\mu)$.
\end{proof}

\subsection{Pointwise convergence}

Let $D_0:=\emptyset$ and 
\[D_p:=\{(x_0,...,x_p)\in X^{p+1}:\ x_i=x_j\ \text{for some $i\neq j$}\},\quad p\geq 1.\]
We complement (\ref{E:boundedrange}) by defining, for any given $x_0\in X$, 
\[N_{p,x_0}:=\{(x_1,...,x_p)\in X^p:\ (x_0,x_1,...,x_p)\in N_p\}\]
and
\[D_{p,x_0}:=\{(x_1,...,x_p)\in X^p:\ (x_0,x_1,...,x_p)\in D_p\}.\]

For fixed $p\geq 1$ and given $\theta_1, ...,\theta_p\in \Theta$, we use the notation 
\[\overline{\theta}=(\theta_1,...,\theta_p).\] 
For any choice of $A_1,...,A_p\in \mathcal{B}(X)$ the product 
\begin{equation}\label{E:genkerneljp}
j_{p,\overline{\theta}}(x_0,A_1\times \cdots \times A_p):=j_{\theta_1}(x_0,A_1)\cdots j_{\theta_p}(x_0,A_p)
\end{equation}
is an element of $L^1(X,\mu(dx_0))$; we use the notation $\mu(dx_0)$ instead of $\mu$ to indicate the integration variable. This defines a generalized kernel $j_{p,\overline{\theta}}(x_0,d(x_1,...,x_p))$ from $X$ to $\mathcal{B}(X)^{\otimes p}$. Note that for $p=1$ we have $j_{1,\overline{\theta}}=j_\theta$. 

We now, imprecisely speaking, consider the spaces $L^2(N_{p,x_0}\setminus D_{p,x_0},j_{p,\overline{\theta}}(x_0,\cdot))$ and note that for any $f_1,...,f_p\in \mathcal{C}$ and $g\in C_b(X)$ the function 
$(x_1,...,x_p)\mapsto \overline{g}\delta_{p-1}\Alt_p(f_1\otimes \cdots\otimes f_p)(x_0,x_1,...,x_p)$
is an element of $L^2(N_{p,x_0}\setminus D_{p,x_0},j_{p,\overline{\theta}}(x_0,\cdot))$. This is made precise by the following
integrated statement, which is immediate from \cite[Lemma 4.1 (i)]{HinzKommer}.

\begin{lemma} Let Assumptions \ref{A:stronglylocal}, \ref{A:kernels} and \ref{A:C} be satisfied, let $N_\ast=(N_p)_{p\geq 0}$ be a system of diagonal neighborhoods, let $p\geq 1$ and $f_1,...,f_p\in \mathcal{C}$, $g\in C_b(X)$.  If $\varphi\in L^1(X,\mu)$, then 
\begin{multline}
\int_X|\varphi(x_0)|\left\|(\overline{g}\delta_{p-1}\Alt_p(f_1\otimes \cdots\otimes f_p))(x_0,\cdot)\right\|_{L^2(N_{p,x_0}\setminus D_{p,x_0},j_{p,\overline{\theta}}(x_0,\cdot))}^2\mu(dx_0)\notag\\
\leq \|g\|_{\sup}^2\prod_{i=1}^p\|\Gamma_\theta(f_i)\|_{L^\infty(X,\mu)}^2\|\varphi\|_{L^1(X,\mu)}.
\end{multline}
If instead $\varphi\in L^\infty(X,\mu)$, then for any $i=1,...,p$ we have 
\begin{multline}
\int_X|\varphi(x_0)|\left\|(\overline{g}\delta_{p-1}\Alt_p(f_1\otimes \cdots\otimes f_p))(x_0,\cdot)\right\|_{L^2(N_{p,x_0}\setminus D_{p,x_0},j_{p,\overline{\theta}}(x_0,\cdot))}^2\mu(dx_0)\notag\\
\leq \|g\|_{\sup}^2 \|\Gamma_\theta(f_i)\|_{L^1(X,\mu)}^2\prod_{k\neq i}\|\Gamma_\theta(f_k)\|_{L^\infty(X,\mu)}^2\|\varphi\|_{L^\infty(X,\mu)}.
\end{multline}
\end{lemma}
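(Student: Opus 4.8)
The plan is to derive both inequalities from a single pointwise-in-$x_0$ bound for the inner $L^2$-norm, which is \cite[Lemma 4.1 (i)]{HinzKommer}, and then to integrate that bound against $|\varphi(x_0)|\,\mu(dx_0)$; the two hypotheses on $\varphi$ correspond to the two ways of distributing the resulting factors between $L^\infty(X,\mu)$ and $L^1(X,\mu)$.

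To make the shape of the pointwise bound visible, I would first note that $|\overline{g}(x_0,\dots,x_p)|=\big|\tfrac{1}{p+1}\sum_{i=0}^p g(x_i)\big|\leq\|g\|_{\sup}$, so the constant $\|g\|_{\sup}^2$ factors out of the inner $L^2(N_{p,x_0}\setminus D_{p,x_0},j_{p,\overline{\theta}}(x_0,\cdot))$-norm. By (\ref{E:deltaalttensor}) and (\ref{E:determinant}) the remaining function is $(x_1,\dots,x_p)\mapsto\tfrac1{p!}\det\big[(\delta_0 f_i(x_0,x_j))_{i,j=1}^p\big]$. Expanding this determinant by the Leibniz formula, using $\big(\sum_{\sigma\in\mathcal{S}_p}a_\sigma\big)^2\leq p!\sum_{\sigma\in\mathcal{S}_p}a_\sigma^2$, and integrating termwise by Fubini against the product kernel $j_{p,\overline{\theta}}(x_0,\cdot)=\bigotimes_{j=1}^p j_{\theta_j}(x_0,\cdot)$, each one-variable factor $\int_X(\delta_0 f_i(x_0,x_j))^2\,j_{\theta_j}(x_0,dx_j)$ is, by (\ref{E:Gammatheta}), exactly $\Gamma_{\theta_j}(f_i)(x_0)$; since the integrand is nonnegative, restricting from $X^p$ to $N_{p,x_0}\setminus D_{p,x_0}$ only decreases it, and the $p!$ produced by the Cauchy--Schwarz step is compensated by the $\tfrac1{(p!)^2}$ from squaring the prefactor (together with the $p!$ terms of the $\mathcal{S}_p$-sum), so no $p$-dependent constant survives. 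This is \cite[Lemma 4.1 (i)]{HinzKommer}: the inner norm is at most $\|g\|_{\sup}^2$ times a sum over $\mathcal{S}_p$ of products of the numbers $\Gamma_{\theta_j}(f_i)(x_0)$.

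Next I would integrate this pointwise bound in $x_0$ against $|\varphi|\,d\mu$. If $\varphi\in L^1(X,\mu)$, I would estimate every factor $\Gamma_{\theta_j}(f_i)(x_0)$ by $\|\Gamma_\theta(f_i)\|_{L^\infty(X,\mu)}$ (with $\theta$ the relevant component of $\overline{\theta}$, and using that each $\sigma\in\mathcal{S}_p$ is a bijection of $\{1,\dots,p\}$ to reorganise the $\mathcal{S}_p$-sum), pull these constants out together with $\|g\|_{\sup}^2$, and be left with $\int_X|\varphi|\,d\mu=\|\varphi\|_{L^1(X,\mu)}$; this gives the first inequality. If instead $\varphi\in L^\infty(X,\mu)$, I would, for the given index $i$, keep the single factor $\Gamma_\theta(f_i)$ inside the $x_0$-integral, bound the remaining factors in $L^\infty(X,\mu)$, and use $\int_X|\varphi|\,\Gamma_\theta(f_i)\,d\mu\leq\|\varphi\|_{L^\infty(X,\mu)}\|\Gamma_\theta(f_i)\|_{L^1(X,\mu)}$; this gives the second inequality. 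All quantities are finite by Assumptions \ref{A:stronglylocal}, \ref{A:kernels} and \ref{A:C}, since $\Gamma_\theta(f_i)\in L^1(X,\mu)\cap L^\infty(X,\mu)$ and $g\in C_b(X)$.

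The only step that needs real attention is the combinatorial bookkeeping of the Leibniz expansion: one must check that the $\mathcal{S}_p$-sum of the mixed products $\prod_{j=1}^p\Gamma_{\theta_j}(f_{\sigma(j)})(x_0)$ collapses to (or is dominated by) the product over $i=1,\dots,p$ appearing in the statement, which is transparent when the components of $\overline{\theta}$ agree and otherwise follows by bounding each mixed product by the corresponding unmixed one. Everything else --- the estimate on $\overline{g}$, Fubini, and the two H\"older pairings --- is routine; as a sanity check, for $p=1$ both inequalities reduce to the familiar first-order estimates, and the exact constants in the two displayed bounds are those recorded in \cite[Lemma 4.1 (i)]{HinzKommer}.
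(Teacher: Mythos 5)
Your plan is, in substance, exactly what the paper does: the paper disposes of this lemma by declaring it immediate from \cite[Lemma 4.1 (i)]{HinzKommer}, and the computation you spell out --- bounding $|\overline{g}|\leq\|g\|_{\sup}$, rewriting $\delta_{p-1}\Alt_p(f_1\otimes\cdots\otimes f_p)(x_0,\cdot)$ as $\tfrac{1}{p!}\det\big[(\delta_0f_i(x_0,x_j))_{i,j}\big]$ via (\ref{E:deltaalttensor}) and (\ref{E:determinant}), expanding by Leibniz, using $(\sum_\sigma a_\sigma)^2\leq p!\sum_\sigma a_\sigma^2$, integrating each factor against $j_{\theta_j}(x_0,\cdot)$ to produce $\Gamma_{\theta_j}(f_{\sigma(j)})(x_0)$ as in (\ref{E:Gammatheta}), discarding the restriction to $N_{p,x_0}\setminus D_{p,x_0}$ by nonnegativity, and then pairing with $|\varphi|$ in the two H\"older ways --- is precisely the content of that citation. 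So the method is correct and is the same route.

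One point needs correction rather than the hedge you give at the end. Done as you describe, the bookkeeping yields the bounds with \emph{first} powers of the carr\'e du champ norms: indeed $\tfrac{1}{(p!)^2}\cdot p!\cdot p!=1$, so you obtain $\|g\|_{\sup}^2\prod_{i=1}^p\|\Gamma_\theta(f_i)\|_{L^\infty(X,\mu)}\|\varphi\|_{L^1(X,\mu)}$ in the first case and $\|g\|_{\sup}^2\|\Gamma_\theta(f_i)\|_{L^1(X,\mu)}\prod_{k\neq i}\|\Gamma_\theta(f_k)\|_{L^\infty(X,\mu)}\|\varphi\|_{L^\infty(X,\mu)}$ in the second, not the squared $\Gamma$-norms displayed in the statement. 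Your closing claim that ``the exact constants in the two displayed bounds'' drop out of this expansion is therefore not what your argument produces, and since squares of numbers smaller than one decrease, your (sharper) bound does not formally imply the displayed one. In fact the displayed exponents cannot be the right normalization: the left-hand side is of degree $2$ in each $f_i$, while the squared $\Gamma$-norms are of degree $4$, so replacing $f_i$ by $cf_i$ with $c$ small would violate the displayed inequality whenever the left-hand side is nonzero; the first-power constants you actually derive are the correct ones. A minor wording point: for genuinely different components $\theta_1,\dots,\theta_p$ of $\overline{\theta}$ you should simply keep $\|\Gamma_{\theta_j}(f_{\sigma(j)})\|$ with its own index (or take the supremum over the components), rather than speak of ``bounding mixed products by unmixed ones'', which has no meaning here.
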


Given a function $\overline{\theta}\mapsto \Phi(\overline{\theta})$, we write 
\begin{equation}\label{E:orderofthings}
\lim_{\overline{\theta}\in \Theta^p}\Phi(\overline{\theta}):=\lim_{\theta_{\sigma(1)}\in \Theta}\cdots \lim_{\theta_{\sigma(p)}\in \Theta}\Phi(\overline{\theta})
\end{equation}
if the limit on the right-hand side exists and does not depend on the chosen order $\sigma\in \mathcal{S}_p$ of individual limits.

We observe the following limit relation, which in particular gives a non-local-to-local convergence of \enquote{cotangential structures} for fixed elements of $\mathcal{C}^p(N_p)$.
\begin{proposition}\label{P:limitx0} Let Assumptions \ref{A:stronglylocal}, \ref{A:kernels} and \ref{A:C} be satisfied, let $N_\ast=(N_p)_{p\geq 0}$ be a system of diagonal neighborhoods and let $p\geq 1$. For any $f_1,...,f_p,h_1,...,h_p\in \mathcal{C}$ and $g,k\in C_b(X)$ we have 
\begin{multline}\label{E:diagonalize}
\lim_{\overline{\theta}\in \Theta^p} \left\langle \overline{g}\delta_{p-1}\Alt_p(f_1\otimes \cdots\otimes f_p),\overline{k}\delta_{p-1}\Alt_p(h_1\otimes\cdots\otimes h_p)\right\rangle_{L^2(N_{p,x_0}\setminus D_{p,x_0},j_{p,\overline{\theta}}(x_0,\cdot))}\\
=\left\langle(gd_0f_1\wedge ...\wedge d_0f_p)_{x_0},(kd_0h_1\wedge...\wedge d_0h_p)_{x_0}\right\rangle_{\Lambda^pT_{x_0}^\ast X}
\end{multline}
weakly$^\star$ in $L^\infty(X,\mu(dx_0))$. If (\ref{E:GammathetalimL1}) holds for all $f\in \mathcal{C}$, then the convergence (\ref{E:diagonalize}) is in $L^1(X,\mu)$.
\end{proposition}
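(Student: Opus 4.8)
The plan is to test the scalar functions $x_0\mapsto\big\langle F(x_0,\cdot),G(x_0,\cdot)\big\rangle_{L^2(N_{p,x_0}\setminus D_{p,x_0},j_{p,\overline{\theta}}(x_0,\cdot))}$, where $F=\overline{g}\,\delta_{p-1}\Alt_p(f_1\otimes\cdots\otimes f_p)$ and $G=\overline{k}\,\delta_{p-1}\Alt_p(h_1\otimes\cdots\otimes h_p)$, against an arbitrary $\varphi\in L^1(X,\mu)$, and to show the resulting numbers converge to the integral of $\varphi$ against the right-hand side. Expanding the determinant formula (\ref{E:determinant}), using the product structure (\ref{E:genkerneljp}) of $j_{p,\overline{\theta}}(x_0,\cdot)$ and Assumption \ref{A:C}, Cauchy--Schwarz yields a bound on $\|F(x_0,\cdot)\|^2_{L^2(\cdots)}$ uniform in $x_0$ and in $\overline{\theta}$, so the tested functions are uniformly bounded in $L^\infty(X,\mu)$ and it suffices to treat $\varphi$ in a dense subset of $L^1(X,\mu)$; I take $\varphi$ bounded with compact support $L$. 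By (\ref{E:determinant}) the function $\delta_{p-1}\Alt_p(f_1\otimes\cdots\otimes f_p)(x_0,\cdot)=\tfrac1{p!}\det[(f_i(\cdot_j)-f_i(x_0))_{i,j}]$ vanishes on $D_{p,x_0}$, so the inner product may be computed over all of $N_{p,x_0}$.

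First I would localize. For $x_0\in L$ and $\varepsilon>0$ small enough (depending on $L$), openness of $N_p$ around the compact set $\{(x_0,\dots,x_0):x_0\in L\}$ gives $B(x_0,\varepsilon)^p\subset N_{p,x_0}$; I split the integral over $N_{p,x_0}$ into the part over $B(x_0,\varepsilon)^p$ and the remainder. On the remainder some coordinate $x_{j_0}$ lies outside $B(x_0,\varepsilon)$; expanding both determinants, factoring the integral by the product structure of $j_{p,\overline{\theta}}(x_0,\cdot)$, bounding all factors except the $j_0$-th by $\sup_\theta\|\Gamma_\theta(\cdot)\|_{L^\infty(X,\mu)}$ (Assumption \ref{A:C}) and the $j_0$-th by Cauchy--Schwarz, and integrating against $|\varphi|$, a further Cauchy--Schwarz bounds this contribution, up to a constant, by $\sum_{j_0=1}^p\big(\int_X\int_{B(x_0,\varepsilon)^c}(f(x_0)-f(y))^2j_{\theta_{j_0}}(x_0,dy)\,\mu(dx_0)\big)^{1/2}\big(\int_X\int_{B(x_0,\varepsilon)^c}(h(x_0)-h(y))^2j_{\theta_{j_0}}(x_0,dy)\,\mu(dx_0)\big)^{1/2}$ (with $f\in\{f_1,\dots,f_p\}$, $h\in\{h_1,\dots,h_p\}$ suppressed in the notation), which tends to $0$ along $\overline{\theta}$ by (\ref{E:killocal}).

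On $B(x_0,\varepsilon)^p$ the product measure factors the integral completely. Writing $\overline{g}\,\overline{k}=g(x_0)k(x_0)+\big(\overline{g}\,\overline{k}-g(x_0)k(x_0)\big)$ and expanding the two determinants, the leading contribution is
\[
g(x_0)k(x_0)\,\frac{1}{(p!)^2}\sum_{\sigma,\tau\in\mathcal{S}_p}\sgn\sigma\,\sgn\tau\,\prod_{j=1}^p\Gamma^{(\varepsilon)}_{\theta_j}(f_{\sigma(j)},h_{\tau(j)})(x_0),
\]
where $\Gamma^{(\varepsilon)}_\theta(u,v):=\int_{B(\cdot,\varepsilon)}(u(\cdot)-u(y))(v(\cdot)-v(y))\,j_\theta(\cdot,dy)$, while the remaining contribution contains, after expansion, at least one factor $g(x_\ell)-g(x_0)$ or $k(x_m)-k(x_0)$; since $g,k$ are uniformly continuous on a fixed compact neighbourhood of $L$, this contribution is $O(\omega(\varepsilon))$ uniformly in $\overline{\theta}$, with $\omega$ a common modulus of continuity, again by Cauchy--Schwarz and Assumption \ref{A:C}. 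In the leading term I pass to the iterated limit (\ref{E:orderofthings}) one index at a time: for fixed $\theta_1,\dots,\theta_{p-1}$, the polarization of Corollary \ref{C:Gammaepsconv} (legitimate since $u\pm v\in\mathcal{C}$) gives $\Gamma^{(\varepsilon)}_{\theta_p}(f_{\sigma(p)},h_{\tau(p)})\to\Gamma(f_{\sigma(p)},h_{\tau(p)})$ weakly$^\star$ in $L^\infty(X,\mu)$, while $\varphi\,g\,k\,\prod_{j<p}\Gamma^{(\varepsilon)}_{\theta_j}(\cdots)\in L^1(X,\mu)$; iterating through all indices, and noting that the outcome is symmetric in $1,\dots,p$ (hence the iterated limit is order-independent), the leading term converges to $g(x_0)k(x_0)\tfrac{1}{(p!)^2}\sum_{\sigma,\tau}\sgn\sigma\,\sgn\tau\prod_j\Gamma(f_{\sigma(j)},h_{\tau(j)})(x_0)$. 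The reindexing $\rho=\tau\sigma^{-1}$ collapses the double sum to $p!\det[(\Gamma(f_i,h_j)(x_0))_{i,j=1}^p]$, and by (\ref{E:Gammaae}), (\ref{E:extprodspace}) and (\ref{E:pointwiseA}) this is --- with the combinatorial normalization of the $L^2$-inner products of antisymmetric functions taken into account --- exactly $\big\langle(g\,d_0f_1\wedge\cdots\wedge d_0f_p)_{x_0},(k\,d_0h_1\wedge\cdots\wedge d_0h_p)_{x_0}\big\rangle_{\Lambda^pT_{x_0}^\ast X}$.

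Assembling the three pieces: tested against $\varphi$, the tail part tends to $0$, the leading part tends --- independently of $\varepsilon$ --- to the desired integral, and the error part is $O(\omega(\varepsilon))$ uniformly in $\overline{\theta}$; a $3\varepsilon$/$\limsup$-argument (let $\varepsilon\to0$) then gives (\ref{E:diagonalize}) weakly$^\star$ in $L^\infty(X,\mu)$. For the last assertion, if (\ref{E:GammathetalimL1}) holds for every $f\in\mathcal{C}$, then Remark \ref{R:L1} and polarization give $\Gamma^{(\varepsilon)}_\theta(u,v)\to\Gamma(u,v)$ in $L^1(X,\mu)$; since products of $L^1$-convergent, $L^\infty$-bounded nets converge in $L^1$, the same scheme --- with $L^1$-norms throughout, the tail and error terms estimated in $L^1$ as above, and a routine tightness/dominated-convergence reduction to compact $L$ --- upgrades the convergence to $L^1(X,\mu)$. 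The hard part is the bookkeeping of the two error mechanisms (the non-local tail, controlled by (\ref{E:killocal}); the discrepancy between $\overline{g}$ and $g(x_0)$, controlled by uniform continuity and Cauchy--Schwarz) so that both stay uniformly small in $\overline{\theta}$ and the $\overline{\theta}$-limit may be interchanged with $\varepsilon\to0$ --- together with the more notational task of matching the combinatorial constants arising from $\Alt_p$, the determinant expansion and (\ref{E:extprodspace}).
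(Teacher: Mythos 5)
Your argument is correct and follows essentially the same route as the paper's proof: the product structure of $j_{p,\overline{\theta}}(x_0,\cdot)$ reduces the inner product to a determinant of the localized quantities $\Gamma^{(\varepsilon)}_{\theta_j}(f_i,h_j)$, whose convergence is obtained from Corollary \ref{C:Gammaepsconv}, polarization, multilinearity of the determinant and the uniform bounds of Assumption \ref{A:C} (your index-by-index iterated limits playing the role of the paper's column-telescoping), while the discrepancy between $\overline{g}\,\overline{k}$ and $g(x_0)k(x_0)$ is controlled by continuity and the non-local tail by (\ref{E:killocal}), exactly as in the paper. Your reduction to bounded, compactly supported $\varphi$ with uniform continuity on a compact neighbourhood is, if anything, a slightly more careful version of the paper's corresponding steps, and the combinatorial normalization you flag at the end is precisely the one the paper absorbs in its appeal to $\Alt_p$ being an orthogonal projection, so it is no shortcoming relative to the published argument.
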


\begin{proof}
We prove (\ref{E:diagonalize}) in the weak$^\star$ sense in $L^\infty(X,\mu)$; the claimed $L^1(X,\mu)$-variant follows by straightforward modifications of the proof.

By (\ref{E:Gammaae}) and (\ref{E:extprodspace}) we have
\begin{equation}\label{E:thewholelot}
\left\langle gd_0f_1\wedge ...\wedge d_0f_p,kd_0h_1\wedge...\wedge d_0h_p\right\rangle_{\Lambda^pT_{x_0}^\ast X}
=g(x_0)k(x_0)\det[\big(\Gamma(f_i,h_j)(x_0)\big)_{i,j=1}^p]
\end{equation}
for $\mu$-a.e. $x_0\in X$. Suppose that $\varepsilon>0$ is given. Writing the matrices in terms of their columns, 
\begin{align}\label{E:telescope}
\big(\Gamma(f_i,h_j)\big)_{i,j=1}^p &-\big(\Gamma_{\theta_j}^{(\varepsilon)}(f_i,h_j)\big)_{i,j=1}^p\notag\\
&=\Big(\big(\Gamma(f_i,h_1)-\Gamma_{\theta_1}^{(\varepsilon)}(f_i,h_1)\big)_{i=1}^p,\big(\Gamma(f_i,h_2)\big)_{i=1}^p,...,\big(\Gamma(f_i,h_p)\big)_{i=1}^p\Big)\notag\\
&+\Big(\big(\Gamma_{\theta_1}^{(\varepsilon)}(f_i,h_1)\big)_{i=1}^p, \big(\Gamma(f_i,h_2)-\Gamma_{\theta_2}^{(\varepsilon)}(f_i,h_2)\big)_{i=1}^p, ...,\big(\Gamma(f_i,h_p)\big)_{i=1}^p\Big)\notag\\
&+\dots\notag\\
&+\Big(\big(\Gamma_{\theta_1}^{(\varepsilon)}(f_i,h_1)\big)_{i=1}^p,..., \Gamma_{\theta_{p-1}}^{(\varepsilon)}(f_i,h_{p-1})\big)_{i=1}^p,\big(\Gamma(f_i,h_p)-\Gamma_{\theta_p}^{(\varepsilon)}(f_i,h_p)\big)_{i=1}^p\Big).
\end{align}
By Corollary \ref{C:Gammaepsconv} and polarization we have 
\[\lim_{\theta_j\in \Theta}\Gamma_{\theta_j}^{(\varepsilon)}(f_i,h_j)=\Gamma(f_i,h_j)\quad\text{weakly$^\star$ in $L^\infty(X,\mu)$}\] 
for all $i$ and $j$. Using Leibniz' formula and (\ref{E:Gammathetainfty}), it follows that  
\begin{align}
&\lim_{\theta_1\in \Theta}\int_X\varphi\:\det\left[\Big(\big(\Gamma(f_i,h_1)-\Gamma_{\theta_1}^{(\varepsilon)}(f_i,h_1)\big)_{i=1}^p,\big(\Gamma(f_i,h_2)\big)_{i=1}^p,...,\big(\Gamma(f_i,h_p)\big)_{i=1}^p\Big)\right]\:d\mu\notag\\
&= \sum_{\sigma\in \mathcal{S}_p}\sgn(\sigma)\lim_{\theta_1\in \Theta}\int_X\varphi\:\big(\Gamma(f_{\sigma(1)},h_1)-\Gamma_{\theta_1}^{(\varepsilon)}(f_{\sigma(1)},h_1)\big)\Gamma(f_{\sigma(2)},h_2)\cdots\Gamma(f_{\sigma(p)},h_p)\:d\mu\notag\\
&=0\notag
\end{align}
for any $\varphi\in L^1(X,\mu)$. Analogous estimates show that also the remaining summands on the right-hands side of (\ref{E:telescope}) converge to zero weakly$^\star$ in $L^\infty(X,\mu)$, so that
\[\lim_{\overline{\theta}\in \Theta}\det[(\Gamma_{\theta_j}^{(\varepsilon)}(f_i,h_j))_{i,j=1}^p]=\det[(\Gamma(f_i,h_j))_{i,j=1}^p]\]
weakly$^\star$ in $L^\infty(X,\mu)$.
Therefore, and again by Leibniz' formula, (\ref{E:thewholelot}) equals
\begin{align}
&\lim_{\overline{\theta}\in \Theta}g(x_0)k(x_0)\sum_{\pi\in \mathcal{S}_p}\sgn(\pi)\int_{B(x_0,\varepsilon)}\cdots\int_{B(x_0,\varepsilon)} (f_{\pi(1)}(x_1)-f_{\pi(1)}(x_0))(h_1(x_1)-h_1(x_0))\times\notag\\
&\hspace{50pt}\cdots \times(f_{\pi(p)}(x_p)-f_{\pi(p)}(x_0))(h_p(x_p)-h_p(x_0))j_{\theta_1}(x_0,dx_1)\cdots j_{\theta_p}(x_0,dx_p)\notag\\
&=\lim_{\overline{\theta}\in \Theta}g(x_0)k(x_0)\int_{B(x_0,\varepsilon)}\cdots\int_{B(x_0,\varepsilon)}\det[(f_j(x_i)-f_j(x_0))_{i,j=1}^p]\prod_{i=1}^p(h_i(x_i)-h_i(x_0))\times\notag\\
&\hspace{250pt}\times j_{\theta_1}(x_0,dx_1)\cdots j_{\theta_p}(x_0,dx_p)\notag\\
&=\lim_{\overline{\theta}\in \Theta}g(x_0)k(x_0)\int_{B(x_0,\varepsilon)}\cdots\int_{B(x_0,\varepsilon)} \Alt_p(\delta_0 f_1(x_0,\cdot)\otimes...\otimes \delta_0f_p(x_0,\cdot))(x_1,...,x_p)\times\label{E:here}\\
&\hspace{70pt}\times \Alt_p(\delta_0 h_1(x_0,\cdot)\otimes...\otimes \delta_0 h_p(x_0,\cdot))(x_1,...,x_p)j_{\theta_1}(x_0,dx_1)\cdots j_{\theta_p}(x_0,dx_p),\notag
\end{align}
seen as weak$^\star$ limits in $L^\infty(X,\mu(dx_0))$. The last equality uses (\ref{E:determinant}) and the fact that $\Alt_p$ is an orthogonal projection in $L^2(B(x_0,\varepsilon)^p\setminus D_{p,x_0},j_{p,\overline{\theta}}(x_0,\cdot))$. By the equality of (\ref{E:deltaalttensor}) and (\ref{E:determinant}) each of the integrals under the limit in (\ref{E:here}) can be rewritten as
\[g(x_0)k(x_0)\int_{B(x_0,\varepsilon)^p}\delta_{p-1}\Alt_p(f_1\otimes...\otimes f_p)(x_0,...,x_p)\delta_{p-1}\Alt_p(h_1\otimes...\otimes h_p)(x_0,...,x_p) j_{\overline{\theta},p}(x_0,d(x_1,...,x_p)).\]
To compare this last line to 
\begin{multline}\label{E:desired}
\int_{B(x_0,\varepsilon)^p}\overline{g}(x_0,...,x_p)\overline{k}(x_0,...,x_p)\delta_{p-1}\Alt_p(f_1\otimes...\otimes f_p)(x_0,...,x_p)\delta_{p-1}\Alt_p(h_1\otimes...\otimes h_p)(x_0,...,x_p)\times \\
\times j_{\overline{\theta},p}(x_0,d(x_1,...,x_p)),
\end{multline}
note that, given arbitrary $\gamma>0$, we can choose $\varepsilon$ so small that 
\[|g(x_0)k(x_0)-\overline{g}(x_0,...,x_p)\overline{k}(x_0,...,x_p)|<\gamma\quad \text{for all $x_1,...,x_p\in B(x_0,\varepsilon)$}.\] 
Integrating the two lines in question against $\varphi\in L^1(X,\mu(dx_0))$, the modulus of the difference of the resulting integrals is bounded by 
\begin{multline}
\gamma\int_X|\varphi(x_0)|\int_{B(x_0,\varepsilon)^p}|\delta_{p-1}\Alt_p(f_1\otimes...\otimes f_p)(x_0,...,x_p)\delta_{p-1}\Alt_p(h_1\otimes...\otimes h_p)(x_0,...,x_p)|\times\notag\\
\qquad\qquad\times j_{\overline{\theta},p}(x_0,d(x_1,...,x_p))\mu(dx_0)
\notag\\
\leq \gamma \left(\int_X|\varphi(x_0)|\int_{B(x_0,\varepsilon)^p}\big(\delta_{p-1}\Alt_p(f_1\otimes...\otimes f_p)(x_0,...,x_p)\big)^2 j_{\overline{\theta},p}(x_0,d(x_1,...,x_p))\mu(dx_0)\right)^{1/2}\times \notag\\
\quad\quad\times \left(\int_X|\varphi(x_0)|\int_{B(x_0,\varepsilon)^p}\big(\delta_{p-1}\Alt_p(h_1\otimes...\otimes h_p)(x_0,...,x_p)\big)^2 j_{\overline{\theta},p}(x_0,d(x_1,...,x_p))\mu(dx_0)\right)^{1/2}.\notag
\end{multline}
By the preceding the limit along $\theta\in \Theta$ of this last quantity is 
\[\gamma \left(\int_X|\varphi|\det\big[(\Gamma(f_i,f_j))_{i,j=1}^p\big]d\mu\right)^{1/2}\left(\int_X|\varphi|\det\big[(\Gamma(h_i,h_j))_{i,j=1}^p\big]d\mu\right)^{1/2},\]
and it can be made arbitrarily small by a suitable choice of $\gamma$. This shows that (\ref{E:here}) equals the limit of (\ref{E:desired}). We may assume that $\varepsilon$ is small enough to have $B(x_0,\varepsilon)^p\subset N_p$, so that another application of Lemma \ref{L:local} gives the equality of (\ref{E:here}) and 
\begin{multline}
\lim_{\overline{\theta}\in \Theta}\int_{N_{p,x_0}\setminus D_{p,x_0}}\overline{g}(x_0,...,x_p)\delta_{p-1}\Alt_p(f_1\otimes...\otimes f_p)(x_0,...,x_p)\times\notag\\
\times\overline{k}(x_0,...,x_p)\delta_{p-1}\Alt_p(h_1\otimes...\otimes h_p)(x_0,...,x_p)j_{p,\overline{\theta}}(x_0,d(x_1,...,x_p)),
\end{multline}
seen as weak$^\star$ limits in $L^\infty(X,\mu(dx_0))$. 
\end{proof}

Setting
\begin{equation}\label{E:Jp}
J_{p,\overline{\theta}}(d(x_0,...,x_p)):=\frac{1}{p+1}\sum_{k=0}^pj_{p,\overline{\theta}}(x_k,d(x_0,..., \hat{x}_k, ..., x_p))\mu(dx_k),\quad p\geq 1,
\end{equation}
we obtain Borel measures $J_{p,\overline{\theta}}$ on $X^{p+1}$, symmetric in $x_0,...,x_p$. 

The following connection between the spaces $L^2(N_{p}\setminus D_{p},J_{p,\overline{\theta}})$ and the direct integrals $L^2(X,\Lambda^pT^\ast X,\mu)$ is immediate from Proposition \ref{P:limitx0} and its proof.

\begin{corollary}\label{C:limit}
Let Assumptions \ref{A:stronglylocal}, \ref{A:kernels} and \ref{A:C} be satisfied, let $N_\ast=(N_p)_{p\geq 0}$ be a system of diagonal neighborhoods and let $p\geq 1$. If $\mathcal{C}\subset C_c(X)$, then for any $f_1,...,f_p,h_1,...,h_p\in \mathcal{C}$ and $g,k\in C_b(X)$ we have 
\begin{multline}\label{E:diagonalizesum}
\lim_{\overline{\theta}\in \Theta^p} \left\langle \overline{g}\delta_{p-1}\Alt_p(f_1\otimes \cdots\otimes f_p), \overline{k}\delta_{p-1}\Alt_p(h_1\otimes\cdots\otimes h_p)\right\rangle_{L^2(N_{p}\setminus D_{p},J_{p,\overline{\theta}})}\\
=\left\langle gd_0f_1\wedge ...\wedge d_0f_p, kd_0h_1\wedge...\wedge d_0h_p\right\rangle_{L^2(X,\Lambda^pT^\ast X,\mu)}.
\end{multline}
\end{corollary}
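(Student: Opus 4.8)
The plan is to deduce (\ref{E:diagonalizesum}) from Proposition \ref{P:limitx0} by integrating against $\mu$ the pointwise (in the base variable $x_0$) weak$^\star$ convergence asserted there, once both sides of (\ref{E:diagonalizesum}) have been put in fibered form.

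First I would rewrite the left-hand side of (\ref{E:diagonalizesum}) as a fibered integral. Since $\overline{g}$ and $\overline{k}$ are symmetric in $(x_0,\dots,x_p)$ while, by (\ref{E:deltaalttensor}), both $\delta_{p-1}\Alt_p(f_1\otimes\cdots\otimes f_p)$ and $\delta_{p-1}\Alt_p(h_1\otimes\cdots\otimes h_p)$ are antisymmetric, their product is a symmetric function on $X^{p+1}$, and $N_p$, $D_p$ are symmetric sets. Expanding $J_{p,\overline{\theta}}$ according to (\ref{E:Jp}) and relabelling, in the $k$-th summand, the variables so that $x_k$ becomes the base point, each of the $p+1$ summands yields the same quantity; hence $\langle\,\cdot\,,\,\cdot\,\rangle_{L^2(N_p\setminus D_p,J_{p,\overline{\theta}})}$ equals $\int_X\langle\,\cdot\,,\,\cdot\,\rangle_{L^2(N_{p,x_0}\setminus D_{p,x_0},j_{p,\overline{\theta}}(x_0,\cdot))}\,\mu(dx_0)$, the integrand being exactly the function of $x_0$ appearing in Proposition \ref{P:limitx0}. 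On the other hand, by (\ref{E:extprodspace}) and (\ref{E:Gammaae}) the fibrewise inner product $\langle gd_0f_1\wedge\cdots\wedge d_0f_p, kd_0h_1\wedge\cdots\wedge d_0h_p\rangle_{\Lambda^pT_{x_0}^\ast X}$ equals $g(x_0)k(x_0)\det[(\Gamma(f_i,h_j)(x_0))_{i,j=1}^p]$, so by (\ref{E:inclusion}), the pointwise representation (\ref{E:pointwiseA}) and the definition of the direct integral the right-hand side of (\ref{E:diagonalizesum}) equals $\int_X g(x_0)k(x_0)\det[(\Gamma(f_i,h_j)(x_0))_{i,j=1}^p]\,\mu(dx_0)$.

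Proposition \ref{P:limitx0} says that the first integrand converges, as $\overline{\theta}$ runs through $\Theta^p$ in the sense of (\ref{E:orderofthings}), weakly$^\star$ in $L^\infty(X,\mu)$ to the second; what remains is to upgrade this to convergence of the two integrals over all of $X$. If $\mu$ were finite this would follow by testing against the constant function; in general I use the hypothesis $\mathcal{C}\subset C_c(X)$ to localize to a fixed compact set. By the locality of the energy measures $\Gamma(f_i,h_j)\,d\mu$ (a consequence of strong locality) the function $x_0\mapsto g(x_0)k(x_0)\det[(\Gamma(f_i,h_j)(x_0))_{i,j=1}^p]$ vanishes $\mu$-a.e.\ outside the compact set $K:=\big(\bigcup_i\supp f_i\cup\bigcup_j\supp h_j\big)_{2\varepsilon}$, for any $\varepsilon>0$. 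Fixing $\varepsilon>0$ so small that (\ref{E:killocal}) and Corollary \ref{C:Gammaepsconv} apply, one sees, as in the proof of Proposition \ref{P:limitx0}, that the fibered integrand differs from its truncation to $B(x_0,\varepsilon)^p$ by a remainder $R^{(\varepsilon)}_{x_0}(\overline{\theta})$ with $\|R^{(\varepsilon)}_{\,\cdot\,}(\overline{\theta})\|_{L^1(X,\mu)}\to 0$ along $\Theta^p$ --- by Cauchy--Schwarz together with a variant of the estimates of the lemma immediately preceding Proposition \ref{P:limitx0}, applied with one of the two inner integrals restricted to $B(x_0,\varepsilon)^c$ and with (\ref{E:killocal}) and Assumption \ref{A:C} controlling the remaining quantities --- while the truncation itself vanishes for $x_0\notin K$ because then $f_i(x_0)=f_i(x_j)=0$ for all $x_j\in B(x_0,\varepsilon)$, so the determinant in (\ref{E:determinant}) has a vanishing row.

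It follows that $\int_X\langle\,\cdot\,,\,\cdot\,\rangle_{L^2(N_{p,x_0}\setminus D_{p,x_0},j_{p,\overline{\theta}}(x_0,\cdot))}\mu(dx_0)$ agrees with $\int_X\mathbf 1_K\,\langle\,\cdot\,,\,\cdot\,\rangle_{L^2(N_{p,x_0}\setminus D_{p,x_0},j_{p,\overline{\theta}}(x_0,\cdot))}\mu(dx_0)$ up to an error bounded by $2\,\|R^{(\varepsilon)}_{\,\cdot\,}(\overline{\theta})\|_{L^1(X,\mu)}$; the latter converges, by Proposition \ref{P:limitx0} tested against $\mathbf 1_K\in L^1(X,\mu)$ (here $\mu$ is Radon and $K$ compact), to $\int_X g(x_0)k(x_0)\det[(\Gamma(f_i,h_j)(x_0))_{i,j=1}^p]\mu(dx_0)$, the restriction to $K$ being harmless by the support property noted above. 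Together with the reductions of the second paragraph this gives (\ref{E:diagonalizesum}). I expect the only genuine difficulty to be exactly this last step: passing from the weakly$^\star$-$L^\infty$ convergence of Proposition \ref{P:limitx0} to convergence of the integrals over a possibly infinite measure space, i.e.\ excluding an escape of mass to infinity --- which is precisely what the hypothesis $\mathcal{C}\subset C_c(X)$, via the locality of energy measures and (\ref{E:killocal}), is there to prevent.
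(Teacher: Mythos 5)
Your proposal is correct and follows exactly the route the paper intends: the paper declares the corollary \enquote{immediate from Proposition \ref{P:limitx0} and its proof}, and what this means is precisely your two steps --- the symmetrization identity reducing the $J_{p,\overline{\theta}}$-inner product to the $\mu(dx_0)$-integral of the fibered inner products, and the upgrade from weak$^\star$ convergence to convergence of the full integrals via the compact supports of elements of $\mathcal{C}$, the vanishing of energy measures off supports, and the (\ref{E:killocal})-type remainder estimates already used in the proof of Proposition \ref{P:limitx0}. Your identification of the hypothesis $\mathcal{C}\subset C_c(X)$ as the device preventing escape of mass is exactly its role; the only detail worth adding is that $\varepsilon$ should also be taken small enough that the parallel set $K$ is relatively compact, so that $\mathbf{1}_K\in L^1(X,\mu)$.
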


\begin{remark}\label{R:quadraticforms}
A special case of Corollary \ref{C:limit} gives
\begin{multline}\label{E:quadraticforms}
\lim_{\overline{\theta}\in \Theta^p} \left\langle \delta_p(\overline{g}\delta_{p-1}\Alt_p(f_1\otimes \cdots\otimes f_p)),\delta_p\overline{k}\delta_{p-1}\Alt_p(h_1\otimes\cdots\otimes h_p)\right\rangle_{L^2(N_{p}\setminus D_{p},J_{p,\overline{\theta}})}\\
=\left\langle d_p(gd_0f_1\wedge ...\wedge d_0f_p), d_p(kd_0h_1\wedge...\wedge d_0h_p)\right\rangle_{L^2(X,\Lambda^pT^\ast X,\mu)}.
\end{multline}
We may then view the linear operator $(\delta_p,\mathcal{C}^p(N_p))$ as densely defined in $L^2(N_{p}\setminus D_{p},J_{p,\overline{\theta}})$ and mapping into $L^2(N_{p+1}\setminus D_{p+1},J_{p+1,\overline{\theta}})$, \cite[Proposition 5.1]{HinzKommer}. If $\delta_p^\ast$ denotes its adjoint, then $\delta_p^\ast\delta_p$ is a non-local variant of the lower Hodge Laplacian $d_p^\ast d_p$. In this context the bilinear extension of the limit relation (\ref{E:quadraticforms}) provides the non-local-to-local convergence of quadratic forms associated with lower Hodge Laplacians \enquote{pointwise} on $\mathcal{C}^p(N_p)$.
\end{remark}

\subsection{Localization maps}

Under the assumptions of the last subsection we can use formula (\ref{E:niceformula}) and linearity to define localization maps which link the complexes in (\ref{E:elemcomplex}) and (\ref{E:localcomplex}). The correctness of a corresponding definition is now ensured by Corollary \ref{C:limit}. 

Given $p\geq 1$ and a function $F\in \mathcal{C}^p(N_p)$
with representation 
\begin{equation}\label{E:repF}
F=\sum_i \overline{g}^{(i)}\delta_{p-1}\Alt_p(f_1^{(i)}\otimes ... \otimes f_p^{(i)}),
\end{equation}
where $g\in \mathcal{C}\oplus \mathbb{R}$ and $f_1,...,f_p\in \mathcal{C}$, we write 
\begin{equation}\label{E:lambdapgeneral}
\lambda_p(F):=\sum_i g^{(i)}d_0f_1\wedge ... \wedge d_0f_p.
\end{equation}
Given $f\in \mathcal{C}^0(N_0)=\mathcal{C}$, let 
\begin{equation}\label{E:lambda0general}
\lambda_0(f):=f. 
\end{equation}
The following result generalizes Theorem \ref{T:localizationM} to a wide variety of metric measure spaces endowed with a strongly local regular Dirichlet form. Recall the definition (\ref{E:OmegapC}) of the spaces $\Omega^p(\mathcal{C})$.

\begin{theorem}\label{T:localization} Let Assumptions \ref{A:stronglylocal}, \ref{A:kernels} and \ref{A:C} be satisfied and let $N_\ast=(N_p)_{p\geq 0}$ be a system of diagonal neighborhoods.
\begin{enumerate}
\item[(i)] For any integer $p\geq 0$ the assignment (\ref{E:lambdapgeneral}) respectively (\ref{E:lambda0general})
defines a linear surjection 
\begin{equation}\label{E:nicemap}
\lambda_p:\mathcal{C}^p(N_p)\to \Omega^p(\mathcal{C}).
\end{equation}
\item[(ii)] The family $\lambda_\ast=(\lambda_p)_{p\geq 0}$ defines a cochain map $\lambda_\ast:(\mathcal{C}^\ast(N_\ast),\delta_\ast)\to (\Omega^\ast(\mathcal{C}),d_\ast)$,
that is,
\[d_p\circ \lambda_p=\lambda_{p+1}\circ \delta_p,\quad p\geq 0.\]
In particular, it induces well-defined linear maps 
\[\lambda_p^\ast:H^p \mathcal{C}^\ast(N_\ast)\to H^p\Omega^\ast(\mathcal{C}),\quad p\geq 0,\] 
between the cohomologies (\ref{E:elemcoho}) and (\ref{E:localcoho}).
\end{enumerate}
\end{theorem}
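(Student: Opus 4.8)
The plan is to reduce everything to the elementary building blocks $F=\overline{g}\,\delta_{p-1}\Alt_p(f_1\otimes\cdots\otimes f_p)$ with $g\in\mathcal{C}\oplus\mathbb{R}$ and $f_1,\dots,f_p\in\mathcal{C}$, exactly as in the manifold case (Theorem \ref{T:localizationM}), and to use Corollary \ref{C:limit} only to certify that the definition (\ref{E:lambdapgeneral}) is unambiguous. The first order of business is \emph{well-definedness}: the representation (\ref{E:repF}) of a given $F\in\mathcal{C}^p(N_p)$ is far from unique, so I must check that the right-hand side of (\ref{E:lambdapgeneral}) does not depend on the chosen representation. Here Proposition \ref{P:simplicial} is what makes (\ref{E:repF}) available in the first place (since $\mathcal{C}$ is an algebra), and Corollary \ref{C:limit} does the rest: if $\sum_i\overline{g}^{(i)}\delta_{p-1}\Alt_p(f_1^{(i)}\otimes\cdots\otimes f_p^{(i)})=0$ in $\mathcal{C}^p(N_p)$, then its pairing in $L^2(N_p\setminus D_p,J_{p,\overline\theta})$ against any other such elementary function is zero for every $\overline\theta$, and passing to the limit $\overline\theta\in\Theta^p$ via (\ref{E:diagonalizesum}) shows that $\sum_i g^{(i)}d_0f_1^{(i)}\wedge\cdots\wedge d_0f_p^{(i)}$ is orthogonal in $L^2(X,\hat\Lambda^pT^\ast X,\mu)$ to every elementary $p$-form of the form $k\,d_0h_1\wedge\cdots\wedge d_0h_p$; by Lemma \ref{L:integratedOmegas} these are dense, so the element is $0$. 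Linearity of $\lambda_p$ is then immediate, and $\lambda_p$ maps onto $\Omega^p(\mathcal{C})$ essentially by construction, since every spanning element $g\,d_0f_1\wedge\cdots\wedge d_0f_p$ of $\Omega^p(\mathcal{C})$ is the image of $\overline g\,\delta_{p-1}\Alt_p(f_1\otimes\cdots\otimes f_p)$; for $p=0$ surjectivity is trivial from (\ref{E:lambda0general}).

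For part (ii) I would imitate the last display of the proof of Theorem \ref{T:localizationM} verbatim, with $d_{0,x}f$ replaced by the generalized differential $d_0f$ of Theorem \ref{T:Eberle}. It suffices to verify $d_p\circ\lambda_p=\lambda_{p+1}\circ\delta_p$ on $F=\overline g\,\delta_{p-1}\Alt_p(f_1\otimes\cdots\otimes f_p)$, since such $F$ span $\mathcal{C}^p(N_p)$ by Proposition \ref{P:simplicial} and both sides are linear (and well-defined, by step one and by Theorem \ref{T:localcomplex} for $d_p$). On the one hand, by the definition of $d_p$ in Section \ref{S:local},
\[
d_p\circ\lambda_p(F)=d_p\big(g\,d_0f_1\wedge\cdots\wedge d_0f_p\big)=d_0g\wedge d_0f_1\wedge\cdots\wedge d_0f_p.
\]
On the other hand, identity (\ref{E:deltaact}) of Proposition \ref{P:simplicial} gives $\delta_p F=\delta_p\Alt_{p+1}(g\otimes f_1\otimes\cdots\otimes f_p)$; applying $\lambda_{p+1}$ to this, I use (\ref{E:deltaact0}) to rewrite $\delta_p\Alt_{p+1}(g\otimes f_1\otimes\cdots\otimes f_p)$ — or more directly note that $\Alt_{p+1}(g\otimes f_1\otimes\cdots\otimes f_p)=\overline{\mathbf 1}\,\delta_p\Alt_{p+1}(g\otimes f_1\otimes\cdots\otimes f_p)/(p+2)$-type manipulations are unnecessary — and instead observe that by the definition of $\lambda_{p+1}$ on the canonical form $\overline{\mathbf 1}\,\delta_p\Alt_{p+1}(g\otimes f_1\otimes\cdots\otimes f_p)$ together with (\ref{E:deltaalttensor}) we directly get $\lambda_{p+1}(\delta_p\Alt_{p+1}(g\otimes f_1\otimes\cdots\otimes f_p))=d_0g\wedge d_0f_1\wedge\cdots\wedge d_0f_p$. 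The two computations agree, which is exactly the chain map identity. For $p=0$ one checks it separately on $f\in\mathcal{C}$: $d_0\lambda_0(f)=d_0f$ and $\lambda_1(\delta_0 f)=\lambda_1(\overline{\mathbf 1}\,\delta_0 f)=d_0f$, using $\delta_0 f=\delta_0\Alt_1(f)=2\,\Alt_2(\mathbf 1\otimes f)$ from (\ref{E:deltaalttensor}). Once $d_\ast\circ\lambda_\ast=\lambda_\ast\circ\delta_\ast$ is established, the induced maps $\lambda_p^\ast$ on cohomology are well-defined by the standard fact that a cochain map sends cocycles to cocycles and coboundaries to coboundaries.

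The main obstacle is the well-definedness in part (i): unlike the manifold case, where $\lambda_{p,x}$ is defined intrinsically by (\ref{E:lambdapx}) before one ever writes $F$ in the form (\ref{E:repF}), here (\ref{E:lambdapgeneral}) is defined \emph{through} a chosen representation, so the argument genuinely depends on having the limit relation of Corollary \ref{C:limit} available as a consistency check — and in turn on Assumptions \ref{A:stronglylocal}, \ref{A:kernels}, \ref{A:C}, which supply the kernels $j_\theta$, the weak$^\star$ (or $L^1$) convergence $\Gamma_\theta^{(\varepsilon)}(f)\to\Gamma(f)$, and the density statements needed to pass from "orthogonal to all elementary forms" to "equals zero". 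I would make sure to state explicitly that the limit in (\ref{E:diagonalizesum}) is taken in the weak$^\star$ sense against a fixed nonnegative $\varphi\in L^1(X,\mu)$ of full support (e.g.\ pairing against $\mathbf 1$ when $\mu$ is finite, as in the fractal product example), so that vanishing of the limit for \emph{all} test forms forces the $L^2$-section itself to vanish. Everything else is a transcription of the classical manifold argument with $d_{0,x}$ replaced by $d_0$.
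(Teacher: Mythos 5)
Your overall strategy is exactly the paper's: use the non-local-to-local limit from Section \ref{S:approx} solely to certify that (\ref{E:lambdapgeneral}) is independent of the chosen representation (\ref{E:repF}), note that linearity and surjectivity are immediate from (\ref{E:OmegapC}), and then obtain the chain-map identity by transcribing the computation from Theorem \ref{T:localizationM} using (\ref{E:deltaact}); part (ii) and the cohomology statement are handled the same way in the paper.

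One point to fix in the well-definedness step: you invoke Corollary \ref{C:limit}, i.e. the integrated limit (\ref{E:diagonalizesum}), but that corollary is stated under the extra hypothesis $\mathcal{C}\subset C_c(X)$, which is not among the assumptions of Theorem \ref{T:localization} (Assumption \ref{A:kernels} only puts $\mathcal{C}\subset \mathcal{D}(\mathcal{E})\cap C_b(X)$). The paper instead appeals to Proposition \ref{P:limitx0}, whose weak$^\star$ convergence (\ref{E:diagonalize}) in $L^\infty(X,\mu(dx_0))$ holds under exactly the stated hypotheses; since a vanishing representative vanishes on each slice $N_{p,x_0}\setminus D_{p,x_0}$, pairing the sum with itself (by bilinearity) gives $\int_X\varphi\,\|\sum_i g^{(i)}d_0f_1^{(i)}\wedge\cdots\wedge d_0f_p^{(i)}\|_{\Lambda^pT_{x_0}^\ast X}^2\,\mu(dx_0)=0$ for all $\varphi\in L^1(X,\mu)$, hence the form is $0$ in $L^2(X,\hat\Lambda^pT^\ast X,\mu)$ without any density argument. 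Your orthogonality-plus-density route via Lemma \ref{L:integratedOmegas} also works if you run it through Proposition \ref{P:limitx0} in this weak$^\star$ form, but your closing remark conflates the two statements: (\ref{E:diagonalizesum}) is already a scalar limit, not a weak$^\star$ one, and testing against a single fixed $\varphi$ of full support is neither what the corollary provides nor necessary. With the citation corrected, the rest of your argument goes through as written.
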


\begin{proof}
To see (i), note that for any $p\geq 1$ the definition of $\lambda_p$ using (\ref{E:lambdapgeneral}) does not depend on the particular representation (\ref{E:repF})  of the given elementary $p$-function $F$: By the linearity in (\ref{E:repF}) and (\ref{E:lambdapgeneral}) it suffices to note that if  
\[\sum_i \overline{g}^{(i)}\delta_{p-1}\Alt_p(f_1^{(i)}\otimes ... \otimes f_p^{(i)})=0,\] 
then (\ref{E:diagonalize}) implies that
\[\sum_i g^{(i)}d_0f_1\wedge ... \wedge d_0f_p=0\quad \text{in $L^2(X,\Lambda^pT^\ast X,\mu)$}.\]
The linearity of (\ref{E:nicemap}) is obvious, its surjectivity is immediate from (\ref{E:OmegapC}). Statement (ii) follows as in the proof of Theorem \ref{T:localizationM}.
\end{proof}

\section{Several realizations}\label{S:realize}

We discuss several choices of kernels $j_\theta(x,dy)$ and algebras $\mathcal{C}$ resulting in concrete realizations of Assumptions \ref{A:stronglylocal}, \ref{A:kernels} and \ref{A:C}.

\subsection{Local averages on manifolds}

On Riemannian manifolds one can use a variant of (\ref{E:diagonalize}) based on geometric averages in the style of \cite{BuragoIvanovKurylev14}. 

Let $M$ be a smooth Riemannian manifold of dimension $n$, let $\mu$ denote the Riemannian volume on $M$ and $(\mathcal{E},W^1_0(M))$ the Dirichlet integral as defined in (\ref{E:Dirichletintegral}). Choose $\mathcal{C}=C_c^\infty(M)$. We write $\nu_n$ for the volume of the unit ball in $\mathbb{R}^n$. Let $r_0>0$ and consider the generalized kernels 
\[j_r(x,dy)=\frac{n+2}{\nu_n r^{n+2}}\mathbf{1}_{B(x,r)}(y)\mu(dy),\quad 0<r<r_0;\]
in this case $\Theta=(0,r_0)$. We write 
\[\Gamma_r(f)(x)=\frac{n+2}{\nu_n r^{n+2}}\int_{B(x,r)}(f(x)-f(y))^2\mu(dy),\quad 0<r<r_0.\]

\begin{corollary}\label{C:averagesrmf}
Given $M$, $(\mathcal{E},W^1_0(M))$, $j_r(x,dy)$, $0<r<r_0$, and $\mathcal{C}$ as stated, Assumptions \ref{A:stronglylocal}, \ref{A:kernels} and \ref{A:C} are satisfied. Moreover, $\lim_{r\to 0}\Gamma_r(f)=\Gamma(f)$ pointwise at all $x\in M$ and in $L^1(M)$ for all $f\in \mathcal{C}$. 

The limit relation (\ref{E:diagonalize}) holds in $L^1(M)$ with $\lim_{\overline{\theta}\in \Theta}$, $X$ and $j_{p,\overline{\theta}}(x_0,\cdot)$ replaced by $\lim_{\overline{r}\to 0}$, $M$ and $j_{p,\overline{r}}(x_0,\cdot)$, it also holds pointwise at all $x\in M$.
\end{corollary}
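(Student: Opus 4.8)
\emph{Strategy.} The plan is to verify Assumptions \ref{A:stronglylocal}, \ref{A:kernels} and \ref{A:C} for this particular data, the only nontrivial ingredient being a normal‑coordinate Taylor expansion identifying the limit of the local averages $\Gamma_r(f)$, and then to read off the limit relation from Proposition \ref{P:limitx0}, with the pointwise refinement obtained by rerunning that proof at a single base point. Assumption \ref{A:stronglylocal} is standard: the Dirichlet integral \eqref{E:Dirichletintegral} is a strongly local regular Dirichlet form on $L^2(M,\mu)$ with carr\'e du champ $\Gamma(f)=\|\nabla f\|_{TM}^2$, cf.\ Examples \ref{Ex:Rmf}. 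The $\mu$‑symmetry \eqref{E:symmetry} of $j_r$ is immediate from $\mathbf{1}_{B(x,r)}(y)=\mathbf{1}_{B(y,r)}(x)$ and the symmetry of the normalising constant, and $\mathcal{C}=C_c^\infty(M)$ is a subalgebra of $C_c(M)\cap\mathcal{D}(\mathcal{E})$ which is $\mathcal{E}^{1/2}$‑dense in $\mathcal{D}(\mathcal{E})=W^1_0(M)$ by definition and dense in $L^1(M,\mu)$; this gives Assumption \ref{A:kernels}(i),(ii). Since $j_r(x,\cdot)$ is carried by $B(x,r)$, the inner integral in \eqref{E:killocal} vanishes identically once $r<\varepsilon$, so Assumption \ref{A:kernels}(iv) is trivial here.

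\emph{Local averages.} Fix $f\in C_c^\infty(M)$ and, shrinking $r_0$ if necessary, assume exponential charts of radius $r_0$ are available at every point of the compact set $\overline{(\supp f)_{r_0}}$. Working in geodesic normal coordinates at $x$ and writing $y=\exp_x v$, one has $f(\exp_x v)=f(x)+\langle\nabla f(x),v\rangle+O(|v|^2)$ and $\mu(dy)=(1+O(|v|^2))\,dv$, with remainders uniform in $x$ over that compact set. Inserting this into $\Gamma_r(f)$ and using $\int_{B(0,r)}v^av^b\,dv=\delta_{ab}\,\nu_n r^{n+2}/(n+2)$, the leading term equals $\|\nabla f(x)\|_{TM}^2$ while the cross and volume‑correction terms contribute $O(r)$, uniformly; for $x\notin(\supp f)_{r_0}$ both $\Gamma_r(f)(x)$ and $\Gamma(f)(x)$ vanish. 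Hence $\Gamma_r(f)\to\Gamma(f)$ pointwise on $M$ and uniformly on $\overline{(\supp f)_{r_0}}$, hence in $L^1(M,\mu)$, which with $\mathcal{E}(f)=\int_M\Gamma(f)\,d\mu$ yields Assumption \ref{A:kernels}(iii). The crude bound $\Gamma_r(f)(x)\le\frac{n+2}{\nu_n r^{n+2}}\lip(f)^2r^2\,\mu(B(x,r))$ together with the volume comparison $\mu(B(x,r))=(1+O(r^2))\nu_n r^n$ gives $\sup_{0<r<r_0}\|\Gamma_r(f)\|_{L^\infty(M)}<\infty$, and $\Gamma(f)\in L^\infty(M)$ is clear, so Assumption \ref{A:C} holds as well.

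\emph{The limit relation.} With Assumptions \ref{A:stronglylocal}, \ref{A:kernels}, \ref{A:C} in force and \eqref{E:GammathetalimL1} established for all $f\in\mathcal{C}$, Proposition \ref{P:limitx0} gives \eqref{E:diagonalize} in $L^1(M,\mu)$, with $\lim_{\overline\theta\in\Theta^p}$, $X$ and $j_{p,\overline\theta}(x_0,\cdot)$ read as $\lim_{\overline r\to0}$, $M$ and $j_{p,\overline r}(x_0,\cdot)$. For the pointwise statement, fix $x_0\in M$ and choose $\varepsilon>0$ with $B(x_0,\varepsilon)^p\subset N_{p,x_0}$ and normal coordinates available on $B(x_0,\varepsilon)$. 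For $\overline r$ with $\max_i r_i<\varepsilon$ the measure $j_{p,\overline r}(x_0,\cdot)$ is carried by $\prod_i B(x_0,r_i)$, so the left‑hand side of \eqref{E:diagonalize} is a genuine integral over that set, and the proof of Proposition \ref{P:limitx0} applies verbatim at the fixed point $x_0$: the manipulations there (telescoping \eqref{E:telescope}, Leibniz' rule, the identity \eqref{E:determinant}, the projection property of $\Alt_p$) are algebraic in $x_0$, the recourse to Lemma \ref{L:local} is vacuous because the kernels already live in a small ball, and the only weak$^\star$ input, Corollary \ref{C:Gammaepsconv}, is replaced by the honest pointwise limit $\Gamma^{(\varepsilon)}_r(f_i,h_j)(x_0)=\Gamma_r(f_i,h_j)(x_0)\to\Gamma(f_i,h_j)(x_0)$ from the previous step (together with polarisation).

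\emph{Main obstacle.} The substantive point is the local‑average asymptotics of the second paragraph: obtaining $\Gamma_r(f)\to\Gamma(f)$ together with the uniform error estimate and the $L^\infty$‑bound requires a Taylor expansion in normal coordinates with remainder and volume density controlled uniformly over $\supp f$. Once that is in hand, the $L^1$ statement is a direct appeal to Proposition \ref{P:limitx0} and the pointwise statement is a localisation of its proof, made painless by the compact support of the kernels $j_r(x,\cdot)$.
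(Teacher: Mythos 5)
Your proposal is correct and follows essentially the same route as the paper: verify the three assumptions (symmetry, density, the triviality of (\ref{E:killocal}) because $j_r(x,\cdot)$ lives on $B(x,r)$, the uniform $L^\infty$-bound), establish $\lim_{r\to 0}\Gamma_r(f)=\Gamma(f)$ pointwise and in $L^1(M)$, and then obtain the $L^1$ and pointwise forms of (\ref{E:diagonalize}) from Proposition \ref{P:limitx0} and a localized rerun of its proof. The only deviation is in the single analytic step: you prove $\Gamma_r(f)\to\Gamma(f)$ by a direct normal-coordinate Taylor expansion with remainders uniform over $\overline{(\supp f)_{r_0}}$ (giving uniform convergence there), whereas the paper quotes the expansion from \cite{BuragoIvanovKurylev14}, namely $\lim_{r\to 0}\frac{2(n+2)}{\nu_n r^{n+2}}\int_{B(x,r)}(f(y)-f(x))\,\mu(dy)=\Delta f(x)$, combines it with the identity $\Gamma(f)=\tfrac12\Delta f^2-f\Delta f$, and passes to $L^1$ via the domination (\ref{E:uniL1bound}); the two computations are equivalent in substance.
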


\begin{proof} The symmetry condition (\ref{E:symmetry}) obviously holds for $j_r(x,dy)$ and $\mu$, and clearly the algebra $C_c^\infty(M)$ is dense in $W^1_0(M)$ and in $L^1(M)$. Since for any $f\in C_c^\infty(M)$ we have 
\begin{equation}\label{E:uniL1bound}
\Gamma_r(f)\leq  \frac{n+2}{\nu_n} \lip(f)^2\mathbf{1}_{(\supp f)_{r_0}},
\end{equation}
each $\Gamma_r(f)$ is in $L^1(M)$. Using the exponential map and Taylor expansion, it was shown in \cite[Section 2.3]{BuragoIvanovKurylev14} that for any $f\in C_c^\infty(M)$ we have 
\[\lim_{r\to 0} \frac{2(n+2)}{\nu_n r^{n+2}}\int_{B(x,r)}(f(y)-f(x))\mu(dy)=\Delta f(x),\quad x\in M,\]
where $\Delta$ denotes the Laplace-Beltrami operator. Since 
\[\Gamma(f)=\frac12\Delta f^2-f\Delta f,\]
the pointwise convergence of $\Gamma_r(f)$ to $\Gamma(f)$ follows, and taking into account (\ref{E:uniL1bound}) also the convergence in $L^1(M)$. Clearly $\Gamma(f)=|\nabla f|^2$ is bounded for $f\in C_c^\infty(M)$, and (\ref{E:uniL1bound})  gives (\ref{E:Gammathetainfty}). Condition (\ref{E:killocal}) is obvious. The claimed pointwise convergence can be seen following the proof of Proposition \ref{P:limitx0} with straightforward modifications.
\end{proof}

\subsection{Semigroup approximation}

A versatile variant of (\ref{E:diagonalize}) can be formulated using the well-known semigroup approximation for $\mathcal{E}$, see \cite[Chapter I, Proposition 3.3.1]{BH91} or \cite[Lemma 1.3.4. (i)]{FOT94}. 

Suppose that Assumption \ref{A:stronglylocal} is satisfied. Let $(P_t)_{t>0}$ be the symmetric Markov semigroup on $L^2(X,\mu)$ uniquely associated with $(\mathcal{E},\mathcal{D}(\mathcal{E}))$, \cite{BH91, FOT94}. Writing 
\[P_t(x,A):=P_t\mathbf{1}_A(x)\] 
for each fixed $t>0$, we find that the $L^0(X,\mu)$-functions
\[x\mapsto j_{t}(x,A):=\frac{1}{2t}P_t(x,A),\quad A\in \mathcal{B}(X),\] 
define generalized kernels $j_t(x,dy)$, $t>0$, from $X$ to $\mathcal{B}(X)$. Given $p\geq 1$, we consider the generalized kernels 
\begin{equation}\label{E:kernelsp}
j_{p,\overline{t}}(x_0,d(x_1,...,x_p))=j_{t_1}(x_0,dx_1)\cdots j_{t_p}(x_0,dx_p)=\frac{1}{2^pt_1\cdots t_p}P_t(x_0,dx_1)\cdots P_t(x_0,dx_p)
\end{equation}
from $X$ to $\mathcal{B}(X)^{\otimes p}$ with index $\overline{t}=(t_1,...,t_p)\in (0,+\infty)^p$.

Given $f\in \mathcal{D}(\mathcal{E})$ and $t>0$, we set 
\begin{equation}\label{E:Gammat}
\Gamma_t(f)(x):=\int_X(f(x)-f(y))^2j_t(x,dy),\quad t>0.
\end{equation}
Recall that $(P_t)_{t>0}$ is said to be \emph{conservative} if $P_t\mathbf{1}=\mathbf{1}$ $\mu$-a.e. for all $t>0$.

\begin{corollary}\label{C:semigroups}
Let Assumption \ref{A:stronglylocal} be satisfied. If $(P_t)_{t>0}$ is conservative, then Assumption \ref{A:kernels} holds with the kernels $j_t(x,dy)$, $0<t<1$. 

If in addition $\mathcal{C}$ is a subalgebra of $\mathcal{D}(\mathcal{E})\cap C_b(X)$, dense in $\mathcal{D}(\mathcal{E})$ and in $L^1(X,\mu)$, and such that for any $f\in \mathcal{C}$ we have $\Gamma(f)\in L^\infty(X,\mu)$ and 
\begin{equation}\label{E:Gammatinfty}
\sup_{0<t<1}\left\|\Gamma_t(f)\right\|_{L^\infty(X,\mu)}<+\infty,\quad f\in \mathcal{C},
\end{equation}
then also Assumption \ref{A:C} holds. 

If both is true, then (\ref{E:diagonalize}) holds weakly in $L^1(X,\mu)$ with $\lim_{\overline{\theta}\in \Theta}$ and $j_{p,\overline{\theta}}(x_0,\cdot)$ replaced by $\lim_{\overline{t}\to 0}$ and $j_{p,\overline{t}}(x_0,\cdot)$ as in (\ref{E:kernelsp}).
\end{corollary}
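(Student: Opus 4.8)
The plan is to read the corollary as a direct instance of the machinery of Section~\ref{S:approx}: once Assumptions~\ref{A:stronglylocal}, \ref{A:kernels} and \ref{A:C} are checked for the semigroup kernels $j_t(x,dy)=\tfrac{1}{2t}P_t(x,dy)$, the third assertion is nothing but Proposition~\ref{P:limitx0} applied to the index set $\Theta=(0,1)$ directed by $t\to 0$ and to the product kernels (\ref{E:kernelsp}). So the real work is the verification of the assumptions, and the central computation there is the expansion of $\Gamma_t(f)$.

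First I would check Assumption~\ref{A:kernels}(i): the $\mu$-symmetry $j_t(x,dy)\mu(dx)=j_t(y,dx)\mu(dy)$ is immediate from the $\mu$-symmetry of $(P_t)_{t>0}$, since for bounded Borel $h,k$ one has $\iint h(x)k(y)\,P_t(x,dy)\mu(dx)=\int h\,P_tk\,d\mu=\int k\,P_th\,d\mu=\iint h(x)k(y)\,P_t(y,dx)\mu(dy)$; applying the same identity to indicators also shows that $j_t(x,dy)$ is a generalized kernel in the required sense (countable additivity of $A\mapsto\int_B j_t(x,A)\mu(dx)$ follows from positivity of $P_t$ and monotone convergence). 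Next, for $f\in\mathcal{D}(\mathcal{E})$ I would use conservativity to write
\[
\Gamma_t(f)=\frac{1}{2t}\big(f^2\,P_t\mathbf{1}-2f\,P_tf+P_t(f^2)\big)=\frac{1}{2t}\big(f^2-2f\,P_tf+P_t(f^2)\big)\ge 0,
\]
whence, using symmetry and conservativity once more, $\int_X\Gamma_t(f)\,d\mu=\tfrac1t\langle f,(I-P_t)f\rangle_{L^2(X,\mu)}<+\infty$, so $\Gamma_t(f)\in L^1(X,\mu)$; and $\tfrac1t\langle f,(I-P_t)f\rangle\uparrow\mathcal{E}(f)$ as $t\to 0$ by the classical semigroup approximation for quadratic forms, \cite[Chapter~I, Proposition~3.3.1]{BH91} or \cite[Lemma~1.3.4~(i)]{FOT94}, which is (\ref{E:Gammathetalim}). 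Since this holds for every $f\in\mathcal{D}(\mathcal{E})$, in particular for $f\in\mathcal{D}(\mathcal{E})\cap C_c(X)$, Condition~\ref{A:kernels}(iv) follows from Lemma~\ref{L:local}, whose hypotheses are now all in place; and (ii) holds, e.g., for $\mathcal{C}=C_c(X)\cap\mathcal{D}(\mathcal{E})$ (dense in $\mathcal{D}(\mathcal{E})$ by regularity and dense in $L^1(X,\mu)$ because $C_c(X)$ is). This gives the first assertion.

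For the second assertion, the stated hypotheses on $\mathcal{C}$ are precisely Assumption~\ref{A:kernels}(ii), while (i), (iii) and (iv) were just established for all of $\mathcal{D}(\mathcal{E})\supset\mathcal{C}$, so Assumption~\ref{A:kernels} holds with this $\mathcal{C}$; moreover $\Gamma(f)\in L^\infty(X,\mu)$ together with (\ref{E:Gammatinfty}) is verbatim Assumption~\ref{A:C} (here $\Gamma_\theta=\Gamma_t$), which by the remark preceding the corollary also secures Assumption~\ref{A:algebraC}. For the third assertion, all of Assumptions~\ref{A:stronglylocal}, \ref{A:kernels} and \ref{A:C} now hold, so Proposition~\ref{P:limitx0} applies verbatim and yields (\ref{E:diagonalize}) in the weak$^\ast$ sense of $L^\infty(X,\mu)$, i.e.\ tested against $L^1(X,\mu)$, with $\lim_{\overline\theta\in\Theta^p}$ and $j_{p,\overline\theta}$ replaced by $\lim_{\overline t\to 0}$ and the kernels (\ref{E:kernelsp}).

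I expect the only genuinely delicate points to be the justification, in the abstract $L^0$ setting, that $P_t(x,dy)$ is a bona fide generalized kernel and is $\mu$-symmetric, and the verification that $\Gamma_t(f)$ is $\mu$-a.e.\ finite and lies in $L^1(X,\mu)$ — both settled above by the nonnegativity of $\Gamma_t(f)$ and the finiteness of its integral; everything else is the assembly of Lemma~\ref{L:local}, the classical semigroup approximation, and Proposition~\ref{P:limitx0}. I would also note that, in contrast to the manifold case of Corollary~\ref{C:averagesrmf}, one cannot expect pointwise or strong $L^1$ convergence of $\Gamma_t(f)$ to $\Gamma(f)$ here, so only the weak$^\ast$ form of (\ref{E:diagonalize}) is available in general.
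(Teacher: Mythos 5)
Your proposal is correct and follows essentially the same route as the paper's (very terse) proof: symmetry of the kernels from the symmetry of $(P_t)_{t>0}$, the identity $\int_X\Gamma_t(f)\,d\mu=\tfrac1t\langle f,(I-P_t)f\rangle_{L^2(X,\mu)}$ via conservativity, the classical monotone semigroup approximation for (\ref{E:Gammathetalim}), Lemma \ref{L:local} for (\ref{E:killocal}), and then Proposition \ref{P:limitx0} for the limit relation. You merely make explicit what the paper compresses into the single formula (\ref{E:spectral}), including the choice of a dense algebra for the first assertion and the reading of \enquote{weakly in $L^1$} as the weak$^\star$-$L^\infty$ statement of Proposition \ref{P:limitx0}.
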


\begin{proof}
Condition (\ref{E:symmetry}) is immediate from the symmetry of $(P_t)_{t>0}$. In the conservative case we have
\begin{equation}\label{E:spectral}
\mathcal{E}(f)=\sup_{t>0}\left\|\Gamma_t(f)\right\|_{L^1(X,\mu)}
\end{equation} 
for all $f\in \mathcal{D}(\mathcal{E})$ by symmetry and the mentioned semigroup approximation for $\mathcal{E}$.
\end{proof}

Suppose that Assumption \ref{A:stronglylocal} holds. Let $(\mathcal{L},\mathcal{D}(\mathcal{L}))$ denote the infinitesimal generator of $(\mathcal{E},\mathcal{D}(\mathcal{E}))$, that is, the unique non-positive definite self-adjoint operator on $L^2(X,\mu)$ such that 
\[\mathcal{E}(f,g)=-\left\langle \mathcal{L}f,g\right\rangle_{L^2(X,\mu)},\quad f\in \mathcal{D}(\mathcal{L}),\quad g\in \mathcal{D}(\mathcal{E}).\]
Let $(\mathcal{L}^{(1)},\mathcal{D}(\mathcal{L}^{(1)}))$ denote the smallest closed extension in $L^1(X,\mu)$ of the restriction of $\mathcal{L}$ to 
\[\{f\in \mathcal{D}(\mathcal{L})\cap L^1(X,\mu): \mathcal{L}f\in L^1(X,\mu)\},\] 
that is, the infinitesimal generator of the strongly continuous semigroup on $L^1(X,\mu)$, obtained through the unique continuation of the restricted operators $P_t|_{L^2(X,\mu)\cap L^1(X,\mu)}$. The space $\mathcal{D}(\mathcal{L}^{(1)})\cap L^\infty(X,\mu)$ is an algebra and a dense subspace of $\mathcal{D}(\mathcal{E})$, \cite[Chapter I, Theorem 4.2.1 and Lemma 4.2.1.1]{BH91}. If $(P_t)_{t>0}$ is conservative, then the limit
\begin{equation}\label{E:Gammalim}
\Gamma(f)(x)=\lim_{t\to 0}\Gamma_t(f)(x) 
\end{equation}
exists in $L^1(X,\mu)$ for any $f\in \mathcal{D}(\mathcal{L}^{(1)})\cap L^\infty(X,\mu)$; this is a consequence of symmetry and
\cite[Chapter I, Theorem 4.2.1]{BH91}. If in addition $\mathcal{C}$ is a subalgebra of $\mathcal{D}(\mathcal{L}^{(1)})\cap C_b(X)$, dense in $\mathcal{D}(\mathcal{E})$ and such that (\ref{E:Gammatinfty}) holds, then (\ref{E:diagonalize}) holds in $L^1(X,\mu)$ with $\lim_{\overline{\theta}\in \Theta}$ and $j_{p,\overline{\theta}}(x_0,\cdot)$ replaced by $\lim_{\overline{t}\to 0}$ and $j_{p,\overline{t}}(x_0,\cdot)$.

\begin{examples}\label{Ex:Rmf2}
As in Examples \ref{Ex:Rmf}, let $(M,\mu)$ be a weighted manifold of dimension $n$, let $(\mathcal{E},W_0^1(M))$ be the Dirichlet integral (\ref{E:Dirichletintegral}) and  $\mathcal{C}=C_c^\infty(M)$. We assume in addition that $M$ is stochastically complete,
\cite[Section 11.4]{Grigoryan2009}, that is, $(P_t)_{t>0}$ is conservative. Then all hypotheses of Corollary \ref{C:semigroups} are satisfied. Note that for any $f\in C_c^\infty(M)$ we have
\begin{equation}\label{E:Gammatrick1}
\|\Gamma_t(f)\|_{\sup}\leq \frac12\left\|\Delta_\mu f^2\right\|_{\sup}+\|f\|_{\sup}\left\|\Delta_\mu f\right\|_{\sup},\quad t>0,
\end{equation}
where $\Delta_\mu$ denotes the Laplacian on the weighted manifold, \cite[Section 3.6]{Grigoryan2009}. This follows from
\[\Gamma_t(f)=\frac{1}{t}\left\lbrace \frac12(P_tf^2-f^2)-f(P_tf-f)\right\rbrace,\quad t>0,\]
and the estimate 
\begin{equation}\label{E:Gammatrick2}
\Big\|\frac{P_th-h}{t}\Big\|_{\sup}=\Big\|\frac{1}{t}\int_0^t P_s\Delta_\mu h\:ds\Big\|_{\sup}\leq \frac{1}{t}\int_0^t\left\|P_s\Delta_\mu h\right\|_{\sup}ds\leq \left\|\Delta_\mu h\right\|_{\sup},
\end{equation}
valid for any $h\in C_c^\infty(M)$. By the preceding remarks the limit relation (\ref{E:Gammalim}) holds in $L^1(M)$, and also the convergence (\ref{E:diagonalize}) holds in $L^1(M)$.
\end{examples}

\begin{examples}\label{Ex:RCDKN}
Let  $K\in \mathbb{R}$, $1\leq N<+\infty$, suppose that $(X,\varrho,\mu)$ is an $\mathrm{RCD}^\ast(K,N)$-space, \cite{EKS15}, and $(\mathcal{E},\mathcal{D}(\mathcal{E}))$ is the Cheeger energy. Given $\lambda>0$ we write 
\[G_\lambda f=\int_0^\infty e^{-\lambda t}P_tf\:dt,\quad f\in L^2(X,\mu),\]
for the $\lambda$-resolvent of $f$, \cite{BH91,FOT94}. Consider the space 
\[\mathcal{C}:=\big\lbrace f\in C_b(X)\cap\mathcal{D}(\mathcal{L})\cap\mathcal{D}(\mathcal{L}^{(1)}):\ \Gamma(f)\in L^\infty(X,\mu)\ \text{and}\ \mathcal{L}f\in L^\infty(X,\mu)\big\rbrace.\]
By definition $\Gamma(f)\in L^\infty(X,\mu)$, $f\in \mathcal{C}$. By \cite[Proposition 12.4]{HMS23} the space $\mathcal{C}$ is an algebra, and there is some $\lambda_0>0$ such that $\mathcal{C}$ contains $\{G_\lambda f:\ f\in C_c(X),\ \lambda>\lambda_0\}$. Since this set is dense in $\mathcal{D}(\mathcal{E})$ and in $L^1(X,\mu)$, the same is true for $\mathcal{C}$. Since by definition $\mathcal{L}f\in L^\infty(X,\mu)$, $f\in \mathcal{C}$, condition (\ref{E:Gammatinfty}) can be seen using (\ref{E:Gammatrick1}) and (\ref{E:Gammatrick2}) with $\|\cdot\|_{L^\infty(X,\mu)}$ in place of $\|\cdot\|_{\sup}$. By the preceding remarks it follows again that the limits in (\ref{E:Gammalim}) and in (\ref{E:diagonalize}) exist in $L^1(M)$.
\end{examples}

\begin{examples}\label{Ex:degenerate}
Let $X=\mathbb{R}^n$ and let $\mu(dx)=dx$ be the $n$-dimensional Lebesgue measure. Consider the bilinear form 
\begin{equation}\label{E:degenerate}
(f,g)\mapsto \sum_{i,j=1}^n\int_{\mathbb{R}^n}a_{ij}\frac{\partial f}{\partial x_i}\frac{\partial g}{\partial x_j}dx, \quad f,g\in C_c^\infty(\mathbb{R}^n),
\end{equation}
where $a_{ij}=a_{ji}$ are bounded measurable real valued coefficients such that $(a_{ij})_{i,j=1}^n$ is positive definite a.e. 
and for each $i$ and $j$ we have $\frac{\partial}{\partial x_i}a_{ij}\in L^2_{\loc}(\mathbb{R}^n)$. This form is closable on $L^2(\mathbb{R}^n)$, \cite[Chapter II, Section 1]{MaR92}, and its closure $(\mathcal{E},\mathcal{D}(\mathcal{E}))$ is a strongly local regular Dirichlet form. Its infinitesimal generator $(\mathcal{L},\mathcal{D}(\mathcal{L}))$ is the Friedrichs extension of a possibly degenerate operator of second order, and $\mathcal{D}(\mathcal{L})$ contains $\mathcal{C}:=C_c^\infty(\mathbb{R}^n)$. The associated symmetric Markov semigroup $(P_t)_{t>0}$ is conservative, see for instance \cite[Theorem 3.7]{ERSZ07} and \cite[p. 74]{ERSZ06}. Using again variants of (\ref{E:Gammatrick1}) and (\ref{E:Gammatrick2}) we find that all hypotheses of Corollary \ref{C:semigroups} are satisfied.
\end{examples}

\begin{examples}\label{Ex:ProdSG2}
As in Examples \ref{Ex:ProdSG}, let $K\subset \mathbb{R}^2$ be the Sierpinski gasket and $(\mathcal{E},\mathcal{F})$ the standard resistance form on $K$. Let also $\nu$ and $h$ be as there. Now let $\mathcal{Z}$ be the algebra of cylindrical functions $f=F\circ h$ with $F\in C_b^2(\mathbb{R}^2)$. As before we have $\Gamma(f)\in L^\infty(X,\nu)$, $f\in \mathcal{Z}$. Let $(\mathcal{L},\mathcal{D}(\mathcal{L}))$ denote the inifinitesimal generator of the Dirichlet form $(\mathcal{E},\mathcal{F})$ on $L^2(K,\nu)$. By \cite[Corollary 6.1]{T08} we have $\mathcal{L}f\in L^\infty(X,\nu)$ for all $f\in \mathcal{Z}$.

We reset notation and, as in Examples \ref{Ex:ProdSG}, take two identical copies $K_j$, $(\mathcal{E}^{(j)},\mathcal{F}_j)$, $\nu_j$, $\Gamma^{(j)}$, $\mathcal{Z}_j$, $j=1,2$, of these objects and consider products: We endow $K_1\times K_2$ with the measure $\nu_1\otimes \nu_2$ and write $(\mathcal{E},\mathcal{D}(\mathcal{E}))$ for the product Dirichlet form (\ref{E:productform}) with domain $\mathcal{D}(\mathcal{E})$ as described in Examples \ref{Ex:ProdSG} and $\Gamma$ for the carr\'e du champ as in (\ref{E:productcarre}). Let $(P_t^{(j)})_{t>0}$, $j=1,2$, denote the symmetric Markov semigroups on $L^2(K_j,\nu_j)$ uniquely associated with $(\mathcal{E}_j,\mathcal{F}_j)$, respectively. They can be extended to symmetric Markov semigroups on $L^2(K_1\times K_2,\nu_1\otimes \nu_2)$ by setting 
\[P_t^{(1)}f(x_1,x_2):=P_t^{(1)}(f(\cdot,x_2))(x_1)\quad \text{and}\quad P_t^{(2)}f(x_1,x_2):=P_t^{(2)}(f(x_1,\cdot))(x_2),\] 
see \cite[Chapter V, Section 2.1]{BH91}. They commute, and the operators $P_t$ of the symmetric Markov semigroup $(P_t)_{t>0}$ on $L^2(K_1\times K_2,\nu_1\otimes \nu_2)$ uniquely associated with $(\mathcal{E},\mathcal{D}(\mathcal{E}))$ satisfy $P_t=P_t^{(1)}P_t^{(2)}=P_t^{(2)}P_t^{(1)}$, $t>0$, \cite[Chapter V, Proposition 2.1.3]{BH91}. Since the semigroups $(P_t^{(j)})_{t>0}$, $j=1,2$ are conservative, so is $(P_t)_{t>0}$. Given $f_j\in \mathcal{Z}_j$, $j=1,2$, we find that 
\begin{align}
\Gamma_t(f_1\otimes f_2)(x_1,x_2)&=\frac{1}{2t}\int_{K_1\times K_2}(f_1(x_1)f_2(x_2)-f_1(y_1)f_2(y_2))^2P_t^{(1)}(x_1,dy_1)P_t^{(2)}(x_2,dy_2)\notag\\
&\leq f_1(x_1)^2\frac{1}{t}\int_{K_2}(f_2(x_2)-f_2(y_2))^2P_t^{(2)}(x_2,dy_2)\notag\\
&\qquad\qquad P_t^{(2)}(f_2^2)(x_2)\frac{1}{t}\int_{K_1}(f_1(x_1)-f_1(y_1))^2P_t^{(1)}(x_1,dy_1)\notag\\
&\leq 2\|f_1\|^2_{L^\infty(K_1,\nu_1)}\|\Gamma^{(2)}_t(f_2)\|_{L^\infty(K_2,\nu_2)}+2\|f_2\|^2_{L^\infty(K_2,\nu_2)}\|\Gamma^{(1)}_t(f_1)\|_{L^\infty(K_1,\nu_1)}.\notag
\end{align}
This implies (\ref{E:Gammatinfty}). It follows that $K_1\times K_2$, $\nu_1\otimes\nu_2$, $(\mathcal{E},\mathcal{D}(\mathcal{E}))$ and $\mathcal{C}:=\mathcal{Z}_1\otimes \mathcal{Z}_2$ satisfy all hypotheses of Corollary \ref{C:semigroups}.
\end{examples}

\begin{remark}\label{R:subR} The hypotheses of Corollary \ref{C:semigroups} are also satisfied for certain sub-Riemannian geometries. However, in such cases the interpretation of $\Omega^\ast(\mathcal{C})$ may need more discussion.
\end{remark}

\subsection{L\'evy kernel approximation}

A variant of the preceding semigroup approximation is an approximation using \enquote{fractional} kernels in the spirit of \cite{BBM01}. Given $0<\alpha<1$, let 
\[\nu_\alpha(dt)=\frac{\alpha\:dt}{\mathbb{\Gamma}(1-\alpha)t^{\alpha+1}}\]
be the L\'evy measure of the $\alpha$-stable subordinator, \cite{Jacob01, Sato99}; here $\mathbb{\Gamma}$ denotes the Euler Gamma function. 

Let Assumption \ref{A:stronglylocal} be in force. Then the $L^\infty(X,\mu)$-functions 
\[x\mapsto j_\alpha(x,A):=\frac12\int_{0+}^\infty P_t(x,A)\nu_\alpha(dt),\quad A\in \mathcal{B}(X),\]
define generalized kernels $j_\alpha(x,dy)$, $0<\alpha<1$, from $X$ to $\mathcal{B}(X)$. 

\begin{examples}
Suppose that $X=M$ is a $n$-dimensional weighted manifold as in Examples \ref{Ex:Rmf} and that its heat kernel $p_t(x,y)$, \cite{Da89, Grigoryan2009}, admits 
two-sided Gaussian estimates of the form
\[c^{-1}t^{-\frac{n}{2}}\exp\big(-c\frac{\varrho(x,y)^2}{t}\big)\leq p(t,x,y)\leq ct^{-\frac{n}{2}}\exp\big(-\frac{\varrho(x,y)^2}{ct}\big),\quad x,y\in M,\]
for all $t>0$, where $c>1$ is a universal constant and $\varrho$ denotes the geodesic distance. Then $P_t(x,dy)=p_t(x,y)\mu(dy)$, and  $j_\alpha(x,dy)=j_\alpha(x,y)\mu(dy)$ with symmetric density $j_\alpha(x,y)$ satisfying the estimate
\[c^{-1}\varrho(x,y)^{-n-2\alpha}\leq j_\alpha(x,y)\leq c\varrho(x,y)^{-n-2\alpha},\quad x,y\in M, \]
with a universal constant $c>1$. In this case we have 
\[c^{-1}\:\int_M\frac{(f(x)-f(y))^2}{\varrho(x,y)^{n+2\alpha}}\mu(dy)\leq \Gamma_\alpha(f)(x)\leq c\:\int_M\frac{(f(x)-f(y))^2}{\varrho(x,y)^{n+2\alpha}}\mu(dy),\quad x\in M,\]
for any $f\in \mathcal{C}$.
\end{examples}

Given $p\geq 1$, we consider the generalized kernels
\begin{equation}\label{E:fractionalkernelsp}
j_{p,\overline{\alpha}}(x_0,d(x_1,...,x_p))=j_{\alpha_1}(x_0,dx_1)\cdots j_{\alpha_p}(x_0,dx_p)
\end{equation}
from $X$ to $\mathcal{B}(X)^{\otimes p}$ with index $\overline{\alpha}=(\alpha_1,...,\alpha_p)\in (0,1)^p$.

%Given $f\in \mathcal{D}(\mathcal{E})$ and $0<\alpha<1$, we set 
%\[\Gamma_\alpha(f)(x):=\int_X(f(x)-f(y))^2j_\alpha(x,dy),\quad 0<\alpha<1.\]

\begin{corollary}\label{C:fractional}
Let Assumption \ref{A:stronglylocal} be satisfied. If $(P_t)_{t>0}$ is conservative, then Assumption \ref{A:kernels} holds with the kernels $j_\alpha(x,dy)$, $0<\alpha<1$. 

If in addition $\mathcal{C}$ is a subalgebra of $\mathcal{D}(\mathcal{E})\cap C_b(X)$, dense in $\mathcal{D}(\mathcal{E})$ and in $L^1(X,\mu)$, and such that for any $f\in \mathcal{C}$ we have $\Gamma(f)\in L^\infty(X,\mu)$ and (\ref{E:Gammatinfty}), then also Assumption \ref{A:C} holds for the kernels $j_\alpha(x,dy)$, $0<\alpha<1$. 

If both is true, then (\ref{E:diagonalize}) holds weakly in $L^1(X,\mu)$ with $\lim_{\overline{\theta}\in \Theta}$ and $j_{p,\overline{\theta}}(x_0,\cdot)$ replaced by $\lim_{\overline{\alpha}\to 1}$ and $j_{p,\overline{\alpha}}(x_0,\cdot)$ as in (\ref{E:fractionalkernelsp}).
\end{corollary}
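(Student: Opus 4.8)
\textit{Proof (sketch).} The plan is to transcribe the proof of Corollary~\ref{C:semigroups}, replacing the quantity $\langle(I-P_t)f,f\rangle_{L^2(X,\mu)}$ by its $\alpha$-stable subordination, and then to invoke Lemma~\ref{L:local} and Proposition~\ref{P:limitx0}. First I would record, for $f\in\mathcal{D}(\mathcal{E})$, the pointwise identity
\[
\Gamma_\alpha(f)(x)=\int_{0+}^\infty t\,\Gamma_t(f)(x)\,\nu_\alpha(dt),
\]
which is immediate from the definitions of $j_\alpha$ and $\Gamma_t$ by Tonelli's theorem. The $\mu$-symmetry (\ref{E:symmetry}) of $j_\alpha$ then follows by integrating the $\mu$-symmetry of the $P_t$ against $\nu_\alpha$, e.g.\ via $\iint\varphi(x)\psi(y)j_\alpha(x,dy)\mu(dx)=\tfrac12\int_{0+}^\infty\langle\varphi,P_t\psi\rangle_{L^2(X,\mu)}\nu_\alpha(dt)$, which is symmetric in $\varphi,\psi$.

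Integrating the displayed identity over $X$ and using conservativity and symmetry gives $\int_X\Gamma_\alpha(f)\,d\mu=\int_{0+}^\infty\langle(I-P_t)f,f\rangle_{L^2(X,\mu)}\,\nu_\alpha(dt)$, since $\int_X\Gamma_t(f)\,d\mu=\tfrac1t\langle(I-P_t)f,f\rangle_{L^2(X,\mu)}$. Writing $\mu_f$ for the spectral measure of $f$ with respect to $-\mathcal{L}$, so that $\langle(I-P_t)f,f\rangle_{L^2(X,\mu)}=\int_0^\infty(1-e^{-\lambda t})\,d\mu_f(\lambda)$ and $\mathcal{E}(f)=\int_0^\infty\lambda\,d\mu_f(\lambda)$ for $f\in\mathcal{D}(\mathcal{E})$, another application of Tonelli together with the L\'evy--Khintchine identity $\int_{0+}^\infty(1-e^{-\lambda t})\nu_\alpha(dt)=\lambda^\alpha$ for the $\alpha$-stable subordinator yields
\[
\int_X\Gamma_\alpha(f)\,d\mu=\int_0^\infty\lambda^\alpha\,d\mu_f(\lambda),\qquad f\in\mathcal{D}(\mathcal{E}).
\]
Since $\lambda^\alpha\le 1+\lambda$ and $\lambda^\alpha\to\lambda$ as $\alpha\uparrow1$ for every $\lambda\ge 0$, dominated convergence shows $\Gamma_\alpha(f)\in L^1(X,\mu)$ and $\lim_{\alpha\to1}\int_X\Gamma_\alpha(f)\,d\mu=\mathcal{E}(f)$, that is (\ref{E:Gammatheta}) and (\ref{E:Gammathetalim}). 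As this holds for all $f\in\mathcal{D}(\mathcal{E})$, in particular for $f\in\mathcal{D}(\mathcal{E})\cap C_c(X)$, Lemma~\ref{L:local} supplies (\ref{E:killocal}); together with the density of a suitable algebra (as in Corollary~\ref{C:semigroups}) this establishes Assumption~\ref{A:kernels} for the kernels $j_\alpha$.

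For the second assertion $\Gamma(f)\in L^\infty(X,\mu)$ is assumed, and (\ref{E:Gammathetainfty}) follows by splitting $\Gamma_\alpha(f)(x)=\int_{0+}^\infty t\,\Gamma_t(f)(x)\,\nu_\alpha(dt)$ at $t=1$: on $(0,1)$ the integrand is at most $t\sup_{0<s<1}\|\Gamma_s(f)\|_{L^\infty(X,\mu)}$, with $\int_0^1 t\,\nu_\alpha(dt)=\alpha/\Gamma(2-\alpha)$; on $[1,\infty)$ the sub-Markovianity of $P_t$ gives $t\,\Gamma_t(f)(x)=\tfrac12\int(f(x)-f(y))^2P_t(x,dy)\le 2\|f\|_{\sup}^2$, with $\int_1^\infty\nu_\alpha(dt)=1/\Gamma(1-\alpha)$. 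Since $\sup_{0<\alpha<1}\bigl(\alpha/\Gamma(2-\alpha)+1/\Gamma(1-\alpha)\bigr)<\infty$, we obtain $\sup_{0<\alpha<1}\|\Gamma_\alpha(f)\|_{L^\infty(X,\mu)}<\infty$ from (\ref{E:Gammatinfty}), so Assumption~\ref{A:C} holds. With Assumptions~\ref{A:stronglylocal}, \ref{A:kernels} and \ref{A:C} then in force for the kernels $j_{p,\overline\alpha}$ of (\ref{E:fractionalkernelsp}), Proposition~\ref{P:limitx0} gives (\ref{E:diagonalize}) weakly in $L^1(X,\mu)$ along the net $\overline\alpha\to1$, the order-independence of the iterated limit being part of the conclusion of Proposition~\ref{P:limitx0}. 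The only genuinely new inputs beyond Corollary~\ref{C:semigroups} are the subordination identity and the uniform-in-$\alpha$ estimate just indicated; both are elementary.
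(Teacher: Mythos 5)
Your proof is correct, and its overall skeleton matches the paper's: symmetry of $j_\alpha$ from the symmetry of $(P_t)_{t>0}$, the split of the subordination integral at $t=1$ for the uniform bound (your estimate $\Gamma_\alpha(f)\le \frac{\alpha}{\mathbb{\Gamma}(2-\alpha)}\sup_{0<t<1}\Gamma_t(f)+\frac{2\|f\|_{\sup}^2}{\mathbb{\Gamma}(1-\alpha)}$ is literally the paper's), and the final appeal to Proposition \ref{P:limitx0}. Where you genuinely diverge is in proving the limit (\ref{E:Gammathetalim}): the paper argues directly, splitting $\int_X\Gamma_\alpha(f)\,d\mu$ at a level $t_\varepsilon$ chosen so that $\mathcal{E}(f)-\int_X\Gamma_{t_\varepsilon}(f)\,d\mu<\varepsilon/8$ and then balancing the factors $\alpha t_\varepsilon^{1-\alpha}/\mathbb{\Gamma}(2-\alpha)$ by hand, whereas you pass through the spectral measure $\mu_f$ and the L\'evy--Khintchine identity $\int_{0+}^\infty(1-e^{-\lambda t})\,\nu_\alpha(dt)=\lambda^\alpha$ to get the exact formula $\int_X\Gamma_\alpha(f)\,d\mu=\int_0^\infty\lambda^\alpha\,d\mu_f(\lambda)$, after which dominated convergence (with $\lambda^\alpha\le 1+\lambda$) finishes the job for every $f\in\mathcal{D}(\mathcal{E})$. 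Your route is cleaner and conceptually transparent (it exhibits $\int_X\Gamma_\alpha(f)\,d\mu$ as the quadratic form of the subordinate generator $-(-\mathcal{L})^\alpha$, up to the obvious normalization), at the price of invoking the spectral theorem explicitly; the paper's argument is more pedestrian but stays entirely at the level of the semigroup approximation (\ref{E:spectral}). You are also slightly more careful than the printed proof on two points the paper leaves implicit: you state explicitly that (\ref{E:Gammathetalim}) holds for all $f\in\mathcal{D}(\mathcal{E})\cap C_c(X)$ so that Lemma \ref{L:local} delivers (\ref{E:killocal}), and you note the uniformity in $\alpha$ of the constants $\alpha/\mathbb{\Gamma}(2-\alpha)$ and $1/\mathbb{\Gamma}(1-\alpha)$ needed for (\ref{E:Gammathetainfty}). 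No gaps.
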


Given $f\in \mathcal{C}$ and $0<\alpha<1$, we set
\[\Gamma_\alpha(f)(x):=\int_X(f(x)-f(y))^2j_\alpha(x,dy),\quad x\in X.\]
Note that $\Gamma_\alpha(f)=\frac{\alpha}{\mathbb{\Gamma}(1-\alpha)}\int_0^\infty \Gamma_t(f)\frac{dt}{t^\alpha}$ with $\Gamma_t(f)$ as defined in (\ref{E:Gammat}); we abuse notation and distinguish the two objects according to their subscript index $t$ or $\alpha$.

\begin{proof}
To see that Assumption \ref{A:kernels} holds, note first that (\ref{E:symmetry})is again straightforward from the symmetry of $(P_t)_{t>0}$. Using Fubini, (\ref{E:spectral}) and 
\[\int_X(f(x)-f(y))^2P_t(x,dy)=f(x)^2-2f(x)P_tf(x)+P_t(f^2)(x),\]
we find that 
\begin{align}
\int_X\Gamma_\alpha(f)\:d\mu&=\frac{\alpha}{2\mathbb{\Gamma}(1-\alpha)}\int_0^1\int_X\int_X(f(x)-f(y))^2P_t(x,dy)\mu(dx)\frac{dt}{t^{\alpha+1}}\notag\\
&\hspace{50pt}+\frac{\alpha}{2\mathbb{\Gamma}(1-\alpha)}\int_1^\infty\int_X\int_X(f(x)-f(y))^2P_t(x,dy)\mu(dx)\frac{dt}{t^{\alpha+1}}\label{E:split}\\
&\leq \frac{\alpha}{\mathbb{\Gamma}(2-\alpha)}\:\mathcal{E}(f)+\frac{2}{\mathbb{\Gamma}(1-\alpha)}\|f\|_{L^2(X,\mu)}^2.\notag
\end{align}
To show that 
\begin{equation}\label{E:targetalpha}
\lim_{\alpha\to 1}\int_X\Gamma_\alpha(f)d\mu=\mathcal{E}(f)
\end{equation}
as required in (\ref{E:Gammathetalim}), let $0<\varepsilon<1$. Choose $t_\varepsilon>0$ such that 
\[\mathcal{E}(f)-\int_X\Gamma_t(f)\:d\mu<\frac{\varepsilon}{8}.\]
Then for any $0<\alpha<1$ we have 
\[\left|(1-\alpha)t_\varepsilon^{\alpha-1}\int_0^{t_\varepsilon}\int_X\Gamma_t(f)\:d\mu\frac{dt}{t^\alpha}-\int_X\Gamma(f)\:d\mu\right|\leq (1-\alpha)t_\varepsilon^{\alpha-1}\int_0^{t_\varepsilon}\left|\int_X\Gamma_t(f)\:d\mu-\int_X\Gamma(f)\:d\mu\right|\frac{dt}{t^\alpha}<\frac{\varepsilon}{8}.\]
Since 
$\lim_{\alpha\to 1}\frac{\alpha t_\varepsilon^{1-\alpha}}{\mathbb{\Gamma}(2-\alpha)}=1$,
we can choose $\alpha$ close enough to $1$ to have 
\[\left|\frac{\alpha t_\varepsilon^{1-\alpha}}{\mathbb{\Gamma}(2-\alpha)}-1\right|<\frac{\varepsilon}{2(1+\mathcal{E}(f))}.\]
Estimating similarly as in (\ref{E:split}), we can choose $\alpha$ so that also  
\[(1-\alpha)t_\varepsilon^{\alpha-1}\int_{t_\varepsilon}^\infty\int_X\Gamma_t(f)\:d\mu\frac{dt}{t^\alpha}\leq \frac{2(1-\alpha)}{\alpha}t_\varepsilon^{-1}\|f\|_{L^2(X,\mu)}^2<\frac{\varepsilon}{8}\]
holds. Combining these estimates gives
\[\left|\mathcal{E}(f)-\int_X\Gamma_\alpha(f)\:d\mu\right|\leq \left|1-\frac{\alpha t_\varepsilon^{1-\alpha}}{\mathbb{\Gamma}(2-\alpha)}\right|\mathcal{E}(f)+\frac{\alpha t_\varepsilon^{1-\alpha}}{\mathbb{\Gamma}(2-\alpha)}\left|(1-\alpha)t_\varepsilon^{\alpha-1}\int_0^\infty \int_X \Gamma_t(f)\:d\mu\frac{dt}{t^\alpha}-\int_X\Gamma(f)\:d\mu\right|<\varepsilon,\]
which shows (\ref{E:targetalpha}). To see (\ref{E:Gammathetainfty}), we can proceed similarly as in (\ref{E:split}) and find that 
\begin{align}
\Gamma_\alpha(f)(x)&=\frac{\alpha}{\mathbb{\Gamma}(1-\alpha)}\int_0^1\Gamma_t(f)\frac{dt}{t^{\alpha}}+\frac{\alpha}{2\mathbb{\Gamma}(1-\alpha)}\int_1^\infty\int_X(f(x)-f(y))^2P_t(x,dy)\frac{dt}{t^{\alpha+1}}\notag\\
&\leq \frac{\alpha}{\mathbb{\Gamma}(2-\alpha)}\:\sup_{0<t<1}\Gamma_t(f)(x)+\frac{2\left\|f\right\|_{\sup}^2}{\mathbb{\Gamma}(1-\alpha)}.\notag
\end{align}
\end{proof}

\appendix

\section{Estimates for wedge products}\label{App:wedge}

We give a proof Lemma \ref{L:wedgewelldef}, the arguments are standard.
\begin{proof}
Item (i) is a variant of a standard formula. To verify it, let $\{e_i:\ i=1,2,...\}$ be an orthonormal basis of $T_x^\ast X$. Then 
\begin{equation}\label{E:ortho}
\{e_{i_1}\wedge ...\wedge e_{i_p}:\ i_k \in \{1,2,...\},\ i_1<...<i_p\},
\end{equation}
is an orthonormal basis of $\hat{\Lambda}^pT_x^\ast X$, see for instance \cite[p. 338]{Temam97}. Similarly for $q$ or $p+q$ in place of $p$. If $v=\sum_{i_1<...<i_p} v_{i_1\cdots i_p} e_{i_1}\wedge ...\wedge e_{i_p}$ and $w=\sum_{j_1<...<j_q} w_{j_1\cdots j_q} e_{j_1}\wedge ...\wedge e_{j_q}$ are finite linear combinations with real coefficients $v_{i_1\cdots i_p}$ and $w_{j_1\cdots j_q}$, then 
\[v\wedge w=\sum_{i_1<...<i_p} \sum_{j_1<...<j_q} v_{i_1\cdots i_p} w_{j_1\cdots j_q} e_{i_1}\wedge ...\wedge e_{i_p}\wedge e_{j_1}\wedge ...\wedge e_{j_q}.\]
A summand for which the indices $i_1,...,i_p,j_1,...,j_q$ are not all different is zero. If all are different, then there are $(p+q)!/(p!q!)$ different ways (shuffles) to arrange them so that $i_1<...<i_p$ and $j_1<...<j_q$, and this number is an upper bound for the number of summands that can -- up to sign -- contain the unique element of the orthonormal basis of $\hat{\Lambda}^{p+q}T_x^\ast X$
corresponding to these indices. Consequently 
\[\left\|v\wedge w\right\|_{\hat{\Lambda}^{p+q}T_x^\ast X}^2\leq \Big(\frac{(p+q)!}{p!q!}\Big)^2\sum_{i_1<...<i_p} \sum_{j_1<...<j_q} v_{i_1\cdots i_p}^2 w_{j_1\cdots j_q}^2=\Big(\frac{(p+q)!}{p!q!}\Big)^2 \left\|v\right\|_{\Lambda^{p}T_x^\ast X}^2\left\|w\right\|_{\Lambda^{q}T_x^\ast X}^2.\]
For (ii), let $\{\xi_i:\ i=1,2,...\}\subset \mathcal{M}(T^\ast X)$ be a measurable field of orthonormal bases for $T^\ast X$. Given $i_1<...<i_p$ we then have $\xi_{i_1}\wedge ...\wedge \xi_{i_p}\in \mathcal{M}^p(T^\ast X)$, because 
\[\left\langle \xi_{i_1,x}\wedge ... \wedge \xi_{i_p,x},d_{0,x}f_1\wedge ... \wedge d_{0,x}f_p\right\rangle_{\Lambda^pT_x^\ast X}=\det[(\left\langle \xi_{i_k,x},d_{0,x}f_\ell\right\rangle_{T^\ast X} )_{k,\ell=1}^\infty)]\]
for any $f_1,...,f_p\in \mathcal{A}$ and $x\in X$, and this is measurable. The elements $\xi_{i_1,x}\wedge ... \wedge \xi_{i_p,x}$, $i_1<...<i_p$, form an orthonormal basis of $\hat{\Lambda}^pT_x^\ast X$. Given an element $\omega=(\omega_x)_{x\in X}$ of $\mathcal{M}^p(T^\ast X)$, its evaluation $\omega_x$ at $x$ admits the representation 
\[\omega_x=\sum_{i_1<...<i_p}\omega_{i_1\cdots i_p}(x) \xi_{i_1,x}\wedge ... \wedge \xi_{i_p,x}\]
with coefficients $\omega_{i_1\cdots i_p}(x):=\left\langle \omega_x, \xi_{i_1,x}\wedge ... \wedge \xi_{i_p,x}\right\rangle_{\Lambda^pT_x^\ast X}$, which by the preceding are measurable functions of $x\in X$. Given an element $\eta=(\eta_x)_{x\in X}$ of $\mathcal{M}^q(T^\ast X)$, the same argument gives 
\[\eta_x=\sum_{j_1<...<j_q}\eta_{j_1\cdots j_q}(x)\xi_{j_1,x}\wedge ... \wedge \xi_{j_q,x}\] 
with measurable coefficients $\eta_{j_1\cdots j_q}$. It follows that for any $f_1,...,f_p,f_{p+1},...,f_{p+q}\in \mathcal{A}$ the function 
\begin{multline}
x\mapsto \left\langle \omega_x\wedge \eta_x,d_{0,x}f_1\wedge ... \wedge d_{0,x}f_{p+q}\right\rangle_{\Lambda^{p+q}T_x^\ast X}\notag\\
=\sum_{i_1<...<i_p}\sum_{j_1<...<j_q}\omega_{i_1\cdots i_p}(x)\eta_{j_1\cdots j_q}(x)\times \notag \\
\times \left\langle \xi_{i_1,x}\wedge ... \wedge \xi_{i_p,x}\wedge \xi_{j_{p+1},x}\wedge ... \wedge \xi_{j_{p+q},x}, d_{0,x}f_1\wedge ... \wedge d_{0,x}f_{p+q}\right\rangle_{\Lambda^{p+q}T_x^\ast X}
\end{multline}
is measurable. Item (iii) is immediate from (ii) and (\ref{E:wedgebound}), and (iv) is easily seen. 
\end{proof}

\section{Comments on change of measure}\label{App:change}

For the convenience of the reader we provide a short proof of Theorem \ref{T:changemeasure}. 

\begin{proof} By the separability of $X$ we can find a sequence $(f_k)_{k\geq 1}\subset C_c(X)\cap\mathcal{D}(\mathcal{E})$ with span dense both in $\mathcal{D}(\mathcal{E})$ and $C_c(X)$, a suitable weighted sum of their energy measures gives $\mu'$, see \cite[Lemma 2.1]{HKT15} for a detailed proof. Now let $\mathcal{C}$ be the algebra of all functions obtained from the $f_k$ by taking linear combinations, products and compositions with Lipschitz functions $F:\mathbb{R}\to\mathbb{R}$ such that $F(0)=0$. A suitable mechanism to show the closability of $(\mathcal{E},\mathcal{C})$ on $L^2(X,\mu')$ was formulated in \cite[Th\'eor\`eme 9]{Mokobodzki95}, a variant of it was studied further in \cite{H16}: The form $(\mathcal{E},\mathcal{C})$ is closable with respect to the supremum norm, \cite[Theorem 2.1]{H16}. Since for any compact $K\subset X$ and relatively compact open $U$ containing $X$ we can find $\varphi\in \mathcal{C}$ with $0\leq \varphi\leq 1$, one on $K$ and zero outside $U$, we can find a strictly positive function $\chi\in \mathcal{C}$. Since $\mu'$ dominates all energy measures of functions from $\mathcal{C}$, \cite[Theorem 2.2]{H16} and the comments following it give the closability of $(\mathcal{E},\mathcal{C})$ in $L^2(X,\mu')$. The Markov property of its closure $(\mathcal{E}',\mathcal{D}(\mathcal{E}'))$ can be verified using \cite[Theorem 1.4.2 (v)]{FOT94}. To see the locality of $(\mathcal{E}',\mathcal{D}(\mathcal{E}'))$, we can use (\ref{E:chainrule}) similarly as in \cite[Theorem 6.1.2 and its proof]{BH91} and conclude that if $F\in C_c^1(\mathbb{R})$, $f\in \mathcal{D}(\mathcal{E}')$ and $(f_n)_{n\geq 1}\subset \mathcal{C}$ converges to $f$ in $\mathcal{D}(\mathcal{E}')$, then $(F(f_n)-F(0))_{n\geq 1}\subset \mathcal{C}$ is Cauchy in $\mathcal{D}(\mathcal{E}')$ with limit $F(f)-F(0)$. Now polarization shows that if $F,G\in C_c^\infty(\mathbb{R})$ have disjoint supports and $f$ and $f_n$ are as before, we have 
\[\mathcal{E}'(F(f)-F(0),G(f)-G(0))=\lim_{n\to\infty} \mathcal{E}(F(f_n)-F(0),G(f_n)-G(0))=0.\]
Alternatively, the locality can be seen similarly as in the proof of \cite[Theorem 3.1.2]{FOT94}.
\end{proof}

\end{document}